\newcommand{\N}{\mathbb{N}}
\newcommand{\C}{\mathbb{C}}
\newcommand{\eps}{\varepsilon}
\newcommand{\Manoa}{M\=anoa}
\newcommand{\Hawaii}{Hawai\kern.05em`\kern.05em\relax i}
\newcommand{\SOTh}{\mathrm{SOT}\text{-}}
\newcommand{\WOTh}{\mathrm{WOT}\text{-}}
\newcommand{\cstu}{\mathrm{C}^*_u}
\newcommand{\cstql}{\mathrm{C}^*_{\textit{ql}}}
\newtheorem*{rigprob*}{Rigidity Problem for uniform Roe Algebras}
\newtheorem*{rigprobcorona*}{Rigidity Problem for uniform Roe Coronas}
\newcommand{\cst}{\mathrm{C}^*}
\newcommand{\cstar}{$\mathrm{C}^*$}
\newcommand{\wstar}{$\mathrm{W}^*$}
\newcommand{\cZ}{\mathcal{Z}}
\newcommand{\cP}{\mathcal{P}}
\newcommand{\bbN}{\mathbb{N}}
\newcommand{\cB}{\mathcal{B}}
\newcommand{\cK}{\mathcal{K}}
\newcommand{\R}{\mathbb{R}}
\numberwithin{equation}{section}
\newtheorem{theorem}{Theorem}[section]
\newtheorem*{theorem*}{Theorem}
\newtheorem{proposition}[theorem]{Proposition}
\newtheorem*{proposition*}{Proposition}
\newtheorem{lemma}[theorem]{Lemma}
\newtheorem*{lemma*}{Lemma}
\newtheorem{corollary}[theorem]{Corollary}
\newtheorem*{corollary*}{Corollar}
\newtheorem*{fact*}{Fact}
\theoremstyle{definition}
\newtheorem{definition}[theorem]{Definition}
\newtheorem*{definition*}{Definition}
\newtheorem*{Case1}{\textbf{Case 1}}
\newtheorem*{Case2}{\textbf{Case 2}} 
\newtheorem*{acknowledgments}{Acknowledgments}
\newtheorem{claim}[theorem]{Claim}
\newtheorem*{claim*}{Claim}
\newtheorem{conjecture}[theorem]{Conjecture}
\newtheorem*{conjecture*}{Conjecture}
\theoremstyle{remark}
\newtheorem{example}[theorem]{Example}
\newtheorem*{example*}{Example}
\newtheorem{remark}[theorem]{Remark}
\newtheorem*{remark*}{Remark}
\newtheorem*{note*}{Note}
\newtheorem*{question*}{Question}
\DeclareMathOperator{\supp}{supp}
\DeclareMathOperator{\propg}{prop}
\DeclareMathOperator{\dist}{dist}
\newcommand{\cM}{\mathcal M} 
\newcommand{\cN}{\mathcal N} 
\newcommand{\bbC}{\mathbb C} 
\numberwithin{equation}{section}
\begin{document}
 
\title[On von Neumann algebras inside quasi-local algebras]{Embeddings of von Neumann algebras into uniform Roe algebras and quasi-local algebras}%

\date{\today} % Activate to display a given date or no date

\author[Baudier]{Florent P. Baudier}
\address[F. P. Baudier]{Texas A\&M University, Department of Mathematics, College Station, TX 77843-3368, USA} 
 \email{florent@math.tamu.edu}
 \urladdr{https://www.math.tamu.edu/~florent/}

\author[Braga]{Bruno M. Braga}
\address[B. M. Braga]{PUC-Rio, Departament of Mathematics,
Gavea, Rio de Janeiro - CEP 22451-900, Brazil}
\email{demendoncabraga@gmail.com}
\urladdr{https://sites.google.com/site/demendoncabraga}
 
\author[Farah]{Ilijas Farah}

\address[I. Farah]{Department of Mathematics and Statistics\\
York University\\
4700 Keele Street\\
North York, Ontario\\ Canada, M3J 1P3\\
and 
Matemati\v cki Institut SANU\\
Kneza Mihaila 36\\
11\,000 Beograd, p.p. 367\\
Serbia}
\email{ifarah@yorku.ca}
\urladdr{https://ifarah.mathstats.yorku.ca}

\author[Vignati]{Alessandro Vignati}
\address[A. Vignati]{
Institut de Math\'ematiques de Jussieu (IMJ-PRG)\\
Universit\'e Paris Cit\'e\\
B\^atiment Sophie Germain\\
8 Place Aur\'elie Nemours \\ 75013 Paris, France}
\email{alessandro.vignati@imj-prg.fr}
\urladdr{http://www.automorph.net/avignati}

\author[Willett]{Rufus Willett}
\address[R. Willett]{University of \Hawaii~at \Manoa, 2565 McCarthy Mall, Keller 401A, Honolulu, HI 96816, USA} 
\email{rufus@math.hawaii.edu}
\urladdr{https://math.hawaii.edu/~rufus/}
 
\maketitle
 
 \begin{abstract}
We study which von Neumann algebras can be embedded into uniform Roe algebras and quasi-local algebras associated to a uniformly locally finite metric space $X$. Under weak assumptions, these \cstar-algebras contain embedded copies of $\prod_{k}\mathrm{M}_{n_k}(\C)$ for any \emph{bounded} countable (possibly finite) collection $(n_k)_k$ of natural numbers; we aim to show that they cannot contain any other von Neumann algebras. 

One of our main results shows that $L_\infty[0,1]$ does not embed into any of those algebras, even by a not-necessarily-normal $*$-homomorphism. In particular, it follows from the structure theory of von Neumann algebras that any von Neumann algebra which embeds into such algebra must be of the form $\prod_{k}\mathrm{M}_{n_k}(\C)$ for some countable (possibly finite) collection $(n_k)_k$ of natural numbers. Under additional assumptions, we also show that the sequence $(n_k)_k$ has to be bounded: in other words, the only embedded von Neumann algebras are the ``obvious'' ones.
\end{abstract} 

\tableofcontents

\section{Introduction}\label{Section.Intro}

\subsection{Uniform Roe algebras and quasi-local algebras}

Throughout this paper, $X$ is a metric space. We are interested in algebras of operators on $\ell_2(X)$, the Hilbert space of all square-summable functions from $X$ to $\C$ with its canonical Hilbert space structure and orthonormal basis $(\delta_x)_{x\in X}$. We let $\cB(\ell_2(X))$ denote the space of bounded operators on $\ell_2(X)$, and, given $A\subseteq X$, $\chi_A\in \cB(\ell_2(X))$ denotes the canonical orthogonal projection with image $\ell_2(A)$. 

In noncommutative geometry, one defines algebras of operators on $\ell_2(X)$ with the goal of encoding aspects of the geometry of $X$ in \cstar-algebraic terms. When interested in the large scale (or `coarse') geometric properties of $X$ the following two well-known \cstar-algebras are considered.

\begin{definition}\label{Defi.PropuRa}
Let $a\in\mathcal B(\ell_2(X))$. The \emph{propagation of $a$} is defined by 
\[
\mathrm{prop}(a):=\sup\{d(x,z)\mid \langle a\delta_x,\delta_z\rangle\neq 0\}.
\]
The \emph{uniform Roe algebra of~$X$}, denoted by $\cstu(X)$, is the norm closure of the $*$-algebra of operators with finite propagation.
\end{definition}

\begin{definition}\label{Defi.Quasi.Loc}
The \emph{quasi-local algebra of~$X$}, denoted by $\cstql(X)$, consists of all operators $a\in \cB(\ell_2(X))$ such that for all $\eps>0$ there is $r>0$ for which, for all $A,B\subseteq X$, $d(A,B)>r$\footnote{Throughout this paper, if $(X,d)$ is a metric space and $A,B\subseteq X$, we write $d(A,B)=\inf\{d(x,y)\mid x\in A,\ y\in B\}$; this is of course not a metric.} implies $\|\chi_Aa\chi_B\|\leq \eps$.
\end{definition}

These algebras were introduced by J. Roe to study the index theory of elliptic operators on noncompact manifolds (\cite{Roe1988,Roe1993}). Subsequently, (non-uniform) Roe algebras became important in the set up for the Baum-Connes conjecture (\cite{HigsonRoe1995,Yu:1995bv}); subsequent work (\cite{SkandalisTuYu2002,Spakula:2009tg}) made it clear that there is an equally useful version of the coarse Baum-Connes conjecture based on uniform Roe algebras. Even more recently, the quasi-local algebra has seen increased interest due to applications in index theory (\cite{Engel:2018vm,Engel:2019ux}).

Researchers in mathematical physics have also started to use uniform Roe algebras in the theory of topological materials and, in particular, topological insulators. Their importance in mathematical physics comes from the fact that, to describe a topological phase, one must choose appropriate observable algebras and symmetry types. The literature in this field has been rapidly growing and we refer the reader to \cite{Kubota2017,EwertMeyer2019,Jones2021CommMathPhys,LudewigThiang2021CommMathPhys,Bourne2022JPhys}
for more on the role of uniform Roe algebras and quasi-local algebras in mathematical physics.

Although it is elementary that $\cstu(X)$ is always a \cstar-subalgebra of $\cstql(X)$, it remains one of the biggest problems in the field to known whether these two algebras are actually the same. This entails the need for better understanding of the structure of each of these algebras. For many spaces, the situation is clear: if $X$ has Yu's property A \cite[Definition 2.1]{Yu2000}, we have that $\cstu(X)=\cstql(X)$ (\cite[Theorem 3.3]{SpakulaZhang2020JFA}, and see also \cite{SpakulaTikuisis2019}). The class of metric spaces with property A includes for instance all metric spaces with finite asymptotic dimension (\cite[Lemma 4.3]{Higson:2000dp}) such as finitely generated abelian groups and hyperbolic groups, and all amenable and all linear groups (\cite[Page 244]{GuentnerHigsonNigel2005PMIHES}).

\subsection{Goals}

For the main results of this paper, we assume that all metric spaces are \emph{uniformly locally finite} (abbreviated as \emph{u.l.f.}), that is for each $r>0$, the balls of radius $r$ have uniformly finite cardinality. This covers the most important examples such as countable discrete groups with a left-invariant proper metric, and discretizations of Riemannian manifolds with bounded sectional curvatures and injectivity radius bounded below. To avoid trivial counterexamples, we assume throughout this introduction that $X$ is infinite.

The uniform Roe algebra and quasi-local algebra of $X$ both have an unusual `hybrid' personality that sits somewhere between \cstar-algebra and von Neumann algebra theory: although they are very definitely not von Neumann algebras, they contain a copy of the von Neumann algebra $\ell_\infty(X)\subseteq \cB(\ell_2(X))$ as a \cstar-diagonal in the sense of \cite{kumjian1986c}. The presence of this `von Neumann diagonal' provides very useful tools such as strong convergence and weak compactness arguments that are not usually available to \cstar-algebraists: this has been particularly important in work on the rigidity problem for uniform Roe algebras\footnote{See \S\ref{Subsection.Methods} for more details about the rigidity problem for uniform Roe algebras.}, where the analysis of copies of the von Neumann algebra $\ell_\infty(\bbN)$ inside uniform Roe algebras and quasi-local algebras plays a pivotal role (see for example \cite{SpakulaWillett2013,BragaFarah2018Trans,WhiteWillett2017,BaudierBragaFarahKhukhroVignatiWillett2021uRaRig}). 

More generally, under very weak assumptions on $X$ (see Lemma \ref{cont lem} below) the uniform Roe algebra and quasi-local algebra contain embedded copies of the von Neumann algebra $\prod_{k}\mathrm{M}_{n_k}(\C)$ for any \emph{bounded} sequence $(n_k)_k$ of natural numbers.\footnote{Here, and throughout the paper, we use the usual terminology in operator algebras that, given a sequence of \cstar-algebras $(A_n)_n$, $\prod_n A_n$ denotes the $\ell_\infty$-sum and $\bigoplus_nA_n$ the $c_0$-sum of those algebras.} The following `folk conjecture' has thus been in the air for some time. 

\begin{conjecture}\label{big con}
The only von Neumann algebras that can embed into a uniform Roe algebra or a quasi-local algebra associated to a u.l.f.\ metric space are those of the form $\prod_k \mathrm{M}_{n_k}(\C)$, where $(n_k)_k$ is a countable (possibly finite) and bounded collection of natural numbers.\footnote{See Conjecture \ref{bigger con} below for a more precise version of this conjecture.}
\end{conjecture}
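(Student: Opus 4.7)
My plan is to follow the two-stage strategy foreshadowed in the abstract. First I would reduce the classification claim to a single statement about $L_\infty[0,1]$, using the structure theorem for von Neumann algebras: if the centre $Z(M)$ of a von Neumann algebra $M$ is non-atomic, then $Z(M)$ itself contains a unital copy of $L_\infty[0,1]$; otherwise $M$ is a $\ell_\infty$-product of factors, and any summand of type $I_\infty$, $II$, or $III$ contains $L_\infty[0,1]$ as a masa. Consequently, if $M$ is not of the form $\prod_k \mathrm{M}_{n_k}(\C)$, then $L_\infty[0,1]$ embeds into $M$. Since $\cstu(X) \subseteq \cstql(X)$, the classification part of the conjecture reduces to the single statement that there is no $*$-homomorphism (not required to be normal) from $L_\infty[0,1]$ into $\cstql(X)$ when $X$ is u.l.f.

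Next I would attack the non-embeddability of $L_\infty[0,1]$, which is the heart of the argument. Suppose towards contradiction that $\pi : L_\infty[0,1] \to \cstql(X)$ is an injective $*$-homomorphism. The image carries the uncountable strictly increasing family of projections $q_t := \pi(\chi_{[0,t]})$, $t \in [0,1]$, and the tension to exploit is between the diffuse continuous character of this family and the discrete, locally finite structure of $X$. Concretely, quasi-locality gives, for each $\eps>0$ and each $t$, a finite-propagation approximant of $q_t$ within $\eps$; composing with the canonical conditional expectation $E:\cstql(X)\to\ell_\infty(X)$ produces monotone functions $t\mapsto E(q_t)(x)\in[0,1]$, one per $x\in X$. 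The goal is to extract from these discrete data an invariant that a genuinely diffuse spectral family cannot satisfy, in the spirit of the Cartan-pair, ghost, and rigidity techniques developed in \cite{SpakulaWillett2013, BragaFarah2018Trans, WhiteWillett2017, BaudierBragaFarahKhukhroVignatiWillett2021uRaRig}. An alternative route is to pass to the uniform Roe corona and apply lifting theorems already used in the rigidity literature, where masa analysis plays a parallel role.

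For the boundedness of $(n_k)_k$, flagged in the abstract as requiring additional assumptions, my plan is coarse-geometric. An embedded $\prod_k \mathrm{M}_{n_k}(\C)$ with $n_k \to \infty$ provides, in each block, a system of $n_k^2$ matrix units whose non-zero entries must be realised by bounded-propagation partial isometries once approximated via quasi-locality. Under a condition such as Yu's property A or finite asymptotic dimension, each minimal projection acquires an effectively bounded spatial footprint, while the matrix units can only transport mass over a bounded scale. But a faithful representation of $\mathrm{M}_{n_k}(\C)$ requires $n_k$ pairwise orthogonal rank-one-like summands linked by such partial isometries within a common coarse neighbourhood; once $n_k$ exceeds the u.l.f. growth constant at the relevant propagation scale, this becomes impossible, yielding the desired bound on $(n_k)$.

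The crux and main obstacle is the middle step. Both $L_\infty[0,1]$ and $\cstql(X)$ are non-separable, so cardinality and separability arguments are of no direct use; and because $\pi$ is not assumed normal, spectral measures do not transfer through $\pi$ and most von Neumann techniques inside $L_\infty[0,1]$ become unavailable on the image side. The real work, I expect, is to isolate a quantitative invariant — of the flavour ``no uncountable linearly ordered chain of quasi-local projections with uniformly positive diagonal trace can exist in $\cstql(X)$ for $X$ u.l.f.'' — that is simultaneously forced by the existence of a diffuse abelian $*$-subalgebra and ruled out by quasi-locality on a uniformly locally finite space.
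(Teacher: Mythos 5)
First, note that the statement you are proving is a \emph{conjecture}, and the paper itself does not prove it: its strongest results are \ref{ThmLInftyQL}, \ref{a case} and \ref{Cor.Main.2}, and the authors explicitly state they do not know how to remove the extra hypotheses. Your proposal likewise does not close the gap. The first reduction is fine and matches the paper (it is essentially Proposition \ref{PropvNaStruc} leading to Corollary \ref{Cor.Main}). But the step you correctly identify as the heart of the matter --- non-embeddability of $L_\infty[0,1]$ into $\cstql(X)$ --- is left in your write-up as a goal rather than an argument: ``extract an invariant that a diffuse spectral family cannot satisfy'' is exactly what must be done, and it is not done. The specific tool you propose cannot do it: composing the chain $q_t=\pi(\chi_{[0,t]})$ with the conditional expectation onto $\ell_\infty(X)$ produces monotone functions $t\mapsto E(q_t)(x)$ for \emph{any} increasing family of positive contractions, so no obstruction is visible at that level. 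Moreover, your closing remark that ``cardinality and separability arguments are of no direct use'' points away from the argument that actually works: the paper shows via Johnson--Parrott (Theorem \ref{ThmJohsonParrott}) that the commutant of a diffuse abelian subalgebra of $\cB(\ell_2(X))$ contains no compact operators, hence maps isomorphically into the Calkin algebra, where it must contain the center of $\cstql(X)/\cK(\ell_2(X))$; this center is the Higson corona (Proposition \ref{PropHigCorCenterURQ}), which contains $2^{\aleph_0}$ orthogonal nonzero positive contractions (Lemma \ref{L.HC}) --- contradicting the fact that the commutant is separably represented. The non-normality issue you flag is real, and the paper resolves it by Proposition \ref{P.SOT} (every representation of a von Neumann algebra on a separable Hilbert space is normal on some nonzero corner); nothing in your sketch substitutes for this.

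The boundedness step is also not established by your argument, and in full generality it remains open. Under property A the paper's proof is via exactness: $\prod_k\mathrm{M}_{n_k}(\C)$ with $n_k\to\infty$ is not exact, while $\cstu(X)=\cstql(X)$ is nuclear --- not via bounding ``spatial footprints'' of minimal projections. Your claim that a faithful copy of $\mathrm{M}_{n_k}(\C)$ forces $n_k$ rank-one-like pieces ``within a common coarse neighbourhood'' is unsubstantiated: the images $\Phi(e^k_{ii})$ of the minimal projections may have infinite rank and be spread over all of $X$, and equi-quasi-locality of the unit ball does not localize them. The most one can extract (via Lemma \ref{Claim4.6} and the vector-measure Lemma \ref{LemmaShapleyFolkman}) is that the family of these images is \emph{asymptotically a ghost} (Theorem \ref{ThmBlockMatrixWeakStar2}), which yields a contradiction only under an additional ghost-rigidity hypothesis on $X$ together with an assumption on where the embedding sends minimal projections (Corollary \ref{Cor.Main.2}). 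So your proposal should be read as recovering the reduction behind Corollary \ref{Cor.Main} and then naming, but not closing, the same two gaps the paper leaves open.
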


It is the purpose of this paper to study this conjecture, i.e.\ to study which von Neumann algebras can embed into uniform Roe algebras and into quasi-local algebras\footnote{It is also interesting to study which uniform Roe algebras embed into each other: this was initiated in \cite{BragaFarahVignati2019Comm}.}.

\subsection{Results}

When talking about embeddability of von Neumann algebras, the question of which topology to consider is important. Precisely, unlike the case of \cstar-algebras, where every embedding is automatically continuous in the natural (norm) topology, the category of von Neumann algebras admits singular $*$-homomorphisms that are discontinuous with respect to any of the (many!) natural von Neumann algebra topologies (see \S\ref{S.normal} for further discussion on this). It is also standard in von Neumann algebra theory to assume that all subalgebras contain the unit of the ambient algebra. Here, however, we prove the strongest possible negative results about embeddings: the embeddings are assumed to be merely $*$-algebraic (and therefore norm-continuous), and we do not assume unitality. 

We can now describe the main results of these notes. We stress that, although we state our non-embedding results in terms of the quasi-local algebra, all of them hold for the uniform Roe algebra as well, since it is included in the quasi-local algebra. 

In our investigation of which von Neumann algebras can be found inside some quasi-local algebra, the first step is to classify the \emph{abelian} von Neumann algebras with this property. Recall that an abelian von Neumann algebra $\cM$ is of the form $\mathcal{D}\oplus \ell_\infty(I)$, where $I$ is the set of all minimal projections in $\cM$, and $\mathcal{D}$ is \emph{diffuse}, i.e.\ contains no minimal projections. Note that a diffuse abelian von Neumann algebra acting on a separable Hilbert space is automatically isomorphic to $L_\infty[0,1]$ (see for example \cite[Theorem III.1.22]{Tak:TheoryI}), so the reader will lose little generality assuming that $\mathcal{D}$ is $L_\infty[0,1]$. Our first main result therefore shows that the only abelian von Neumann algebras that embed in quasi-local algebras are the obvious ones: $\ell_\infty(I)$ where $I$ is a countable (possibly finite) set.

\begin{theorem}\label{ThmLInftyQL}
Let $X$ be a u.l.f.\ metric space. There is no $*$-homomorphic embedding of a diffuse abelian von Neumann algebra into $\cstql(X)$.	
\end{theorem}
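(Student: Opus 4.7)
The plan is to argue by contradiction. Suppose $\phi\colon \mathcal{D} \to \cstql(X)$ is a $*$-embedding of a diffuse abelian von Neumann algebra $\mathcal{D}$. Since $\mathcal{D}$ has no minimal projections, iterated splitting of $1_{\mathcal{D}}$ yields a dyadic tree $\{p_s : s \in 2^{<\omega}\}$ of nonzero projections in $\mathcal{D}$ with $p_s = p_{s\cat 0} + p_{s\cat 1}$ an orthogonal decomposition into two nonzero subprojections. Setting $q_s := \phi(p_s) \in \cstql(X)$ produces a tree of nonzero projections with the same orthogonality and refinement structure; in particular $q_{s\cat 0} q_{s\cat 1} = 0$ and $q_{s\cat 0} + q_{s\cat 1} = q_s$ for every $s$.

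The next step is to use quasi-locality together with the u.l.f.\ hypothesis to attach geometric localization data to each node. For every $\varepsilon > 0$ the quasi-locality of $q_s$ provides a radius $r(q_s,\varepsilon)$ with $\|\chi_A q_s \chi_B\| < \varepsilon$ whenever $d(A,B) > r(q_s,\varepsilon)$. Pick $x_s \in X$ with $q_s \delta_{x_s} \neq 0$; normalizing produces a unit vector $\xi_s \in \ran(q_s)$ that is $\varepsilon$-supported in the finite ball $B(x_s, r(q_s,\varepsilon))$. Because $X$ is countable and u.l.f.\ ensures that balls of a given radius are uniformly finite, the possible pairs $(x_s, r_s)$ (at any fixed discretization of $\varepsilon$) range over a countable pool.

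The contradiction should come from a Cantor--Bendixson / pigeonhole argument on branches. Running the above extraction along every $\alpha \in 2^\omega$, while tuning $\varepsilon$ at each level to be much smaller than the orthogonality gap at that level, produces $2^{\aleph_0}$ sequences of localization data valued in a countable pool. A pigeonhole argument forces two distinct branches $\alpha \ne \alpha'$ to have coincident localization data at some level $n$ where they first split, so that $\xi_{\alpha|n}$ and $\xi_{\alpha'|n}$ are nearly-equal unit vectors lying in the ranges of the orthogonal projections $q_{\alpha|n}$ and $q_{\alpha'|n}$, contradicting $q_{\alpha|n} q_{\alpha'|n} = 0$.

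The main obstacle is handling the singular (non-normal) case. Without normality one cannot appeal to SOT-continuity --- in particular, $\phi(p_n) \to 0$ strongly may fail when $p_n \downarrow 0$ in $\mathcal{D}$, and the tree $\{q_s\}$ is not guaranteed to behave continuously in any von Neumann-theoretic topology. Quasi-locality of $\cstql(X)$ must therefore entirely substitute for normality. This is delicate because the scales $r(q_s,\varepsilon)$ can grow uncontrollably down a branch, and the witnesses $x_s$ are not monotone in $s$; any successful implementation of the plan must tune $\varepsilon$ and radius simultaneously at every level so that the pigeonhole on a countable pool yields a genuine numerical contradiction, rather than a merely asymptotic statement that evaporates in the limit.
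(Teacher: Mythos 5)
Your proposal has genuine gaps at its two central steps, and I do not see how to repair it along the lines you describe.

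First, the localization step fails. Quasi-locality of $q_s$ gives $\|\chi_{X\setminus B(x_s,r)}q_s\delta_{x_s}\|<\varepsilon$, i.e.\ the vector $q_s\delta_{x_s}$ is $\varepsilon$-supported near $x_s$ \emph{up to an additive error}. But a nonzero projection need not admit any $x$ with $\|q_s\delta_x\|$ bounded below (ghost projections are exactly the counterexample), so after normalizing $q_s\delta_{x_s}$ the error $\varepsilon/\|q_s\delta_{x_s}\|$ can be enormous and the unit vector $\xi_s$ need not be localized at all. Obtaining a uniform lower bound on $\sup_x\|q\delta_x\|$ across a family of projections is precisely the hard content of Lemma \ref{Claim4.6} in the paper, and it genuinely fails in general. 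Second, and more fundamentally, the pigeonhole has no cardinality mismatch to exploit: the tree $2^{<\omega}$ has only countably many nodes, the localization datum of a node is a single fixed object determined by that node, and two branches share all of their data up to the level where they split. So the $2^{\aleph_0}$ branches do not produce $2^{\aleph_0}$ independent data points to cram into a countable pool; nothing forces the two \emph{sibling} nodes $q_{s\frown 0}\perp q_{s\frown 1}$ to have coincident data. Note also that a diffuse abelian von Neumann algebra acting on a separable Hilbert space contains only \emph{countably} many pairwise orthogonal nonzero projections, so any ``uncountably many orthogonal objects versus separability'' contradiction cannot live inside $\phi(\mathcal D)$ itself. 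Finally, you correctly flag the non-normal case as unresolved, but leave it unresolved.

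The paper's route is quite different and is worth comparing. For normal embeddings it works in the \emph{commutant}: Proposition \ref{PropDifAbelianVNA} shows $\mathcal M'\cap\mathcal K=\{0\}$ for diffuse abelian $\mathcal M$, the Johnson--Parrott theorem identifies $\pi[\mathcal M]'$ with $\pi[\mathcal M']$ in the Calkin algebra, Proposition \ref{PropHigCorCenterURQ} shows the center of the quasi-local corona is the Higson corona, and Lemma \ref{L.HC} manufactures $2^{\aleph_0}$ orthogonal positive elements there using an almost disjoint family --- this is where the uncountable-versus-separable tension actually lives, in the corona of the commutant rather than in the algebra. The non-normal case is then handled by Proposition \ref{P.SOT} (every representation of a von Neumann algebra on a separable Hilbert space is normal on some nonzero corner), together with the observation that every nonzero corner of a diffuse abelian algebra is again diffuse abelian.
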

 
We point out that, although not explicitly asked in the literature, the question of whether a uniform Roe algebra could contain a subalgebra isomorphic to $L_\infty[0,1]$ was already in the air. In fact, it was even unknown up to now if a \emph{masa}, i.e.\ a maximal abelian self-adjoint subalgebra, of a uniform Roe algebra could be isomorphic to $L_\infty[0,1]$ (see e.g., \cite[\S1]{WhiteWillett2017}). Theorem \ref{ThmLInftyQL} solves this problem negatively.

Using the standard type decomposition of von Neumann agebras (see Proposition \ref{PropvNaStruc} for details), Theorem \ref{ThmLInftyQL} allows us to obtain the following corollary. 

\begin{corollary}\label{Cor.Main}
Let $X$ be a u.l.f.\ metric space and let $\cM$ be a von Neumann algebra. If there is a $*$-homomorphic embedding from $\cM$ into $ \cstql(X)$, then~$\cM$ is isomorphic to $\prod_{k}\mathrm M_{n_k}(\C)$ for some countable (possibly finite) collection $(n_k)_k$ in $\N$. 
\end{corollary}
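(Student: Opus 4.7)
The plan is to reduce the statement to Theorem~\ref{ThmLInftyQL} via the standard type decomposition of von Neumann algebras. First, I would observe that every von Neumann subalgebra $\cN \subseteq \cM$ inherits an embedding into $\cstql(X)$ by restriction; in particular, Theorem~\ref{ThmLInftyQL} tells us that $\cM$ contains no diffuse abelian von Neumann subalgebra. All of the subsequent work is about showing that this single obstruction, combined with Proposition~\ref{PropvNaStruc}, forces the conclusion.

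Next I would invoke the type decomposition from Proposition~\ref{PropvNaStruc} to write $\cM$ as a direct product of summands of types $I_n$ (for finite $n$), $I_\infty$, $II_1$, $II_\infty$, and $III$, and rule out every summand other than the finite-$I_n$ ones. A nonzero type $I_\infty$ summand contains a unital copy of $\cB(\ell_2(\N))$, which in turn contains $L_\infty[0,1]$ as a MASA (via the multiplication representation on $\ell_2(\N)\cong L_2[0,1]$). A nonzero type $II$ or $III$ summand, after cutting by a suitable central projection, yields a type $II$ or $III$ factor, and any MASA in such a factor is diffuse because such a factor has no minimal projections. Either scenario would produce a diffuse abelian subalgebra of $\cM$, contradicting the first paragraph. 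Hence $\cM \cong \prod_{n \in \N}\mathrm{M}_n(\mathcal{Z}_n)$ for abelian von Neumann algebras $\mathcal{Z}_n$.

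Finally, each $\mathcal{Z}_n$ is itself an abelian von Neumann subalgebra of $\cM$, so by Theorem~\ref{ThmLInftyQL} it has no diffuse part; thus $\mathcal{Z}_n \cong \ell_\infty(I_n)$ for some index set $I_n$. The atoms of $\mathcal{Z}_n$ correspond to pairwise orthogonal nonzero projections in $\cB(\ell_2(X))$, so $|I_n| \leq \dim \ell_2(X)$, which under the standing (countability) conventions for u.l.f.\ spaces is at most $\aleph_0$. Reindexing the countable disjoint union $\bigsqcup_n I_n$ as a single sequence then gives $\cM \cong \prod_k \mathrm{M}_{n_k}(\C)$ with $(n_k)_k$ a countable (possibly finite) collection of positive integers, as desired.

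The genuine difficulty is entirely absorbed into Theorem~\ref{ThmLInftyQL}; the remainder of the argument is a bookkeeping exercise inside the type classification. The one mild subtlety is verifying that each non-finite-$I$ summand really does harbour a diffuse abelian subalgebra (rather than merely some non-atomic structure), so that it may be excluded via Theorem~\ref{ThmLInftyQL}.
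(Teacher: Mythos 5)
Your argument is essentially the paper's: the paper also deduces from Theorem~\ref{ThmLInftyQL} that $\cM$ admits no (normal) embedding of a diffuse abelian von Neumann algebra and then invokes the type decomposition, except that it packages your second and third paragraphs as a separate statement (Proposition~\ref{PropvNaStruc}) and handles countability via separability of $\ell_2(X)$ exactly as you do. One intermediate claim of yours is false as stated: a nonzero type $II$ or $III$ summand need not have any central projection cutting it down to a factor (e.g.\ $L_\infty[0,1]\,\overline{\otimes}\,R$ with $R$ the hyperfinite $II_1$ factor has diffuse center), but the detour through factors is unnecessary --- a masa in any von Neumann algebra without minimal projections is already diffuse, which is how the paper argues --- so the proof survives with that one sentence repaired.
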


We are then left to understand what kind of products of matrix algebras $\prod_{k}\mathrm M_{n_k}(\C)$ can be found inside a quasi-local algebra; if Conjecture \ref{big con} is true, this is possible (if and) only if $(n_k)_k$ is bounded. 

In the case that the metric space has property A, our results are already enough to solve this. Indeed, $\cstql(X)=\cstu(X)$ by \cite[Theorem 3.3]{SpakulaZhang2020JFA}, and this algebra is nuclear\footnote{Nuclearity is, by a deep theorem (see \cite[IV.3.1.5]{Black:Operator}), the correct notion of amenability in the category of \cstar-algebras. See \cite{BrownOzawa} for background on the notions of nuclearity, exactness and local reflexivity discussed here.} by \cite[Theorem 5.5.7]{BrownOzawa}. On the other hand, if $(n_k)_k$ is unbounded, $\prod_{k}\mathrm M_{n_k}(\C)$ is not exact (see for example \cite[Theorem A.1]{Ozawa:2003aa}), so cannot embed into a nuclear \cstar-algebra. Our results thus imply the following theorem.

\begin{corollary}\label{a case}
Let $X$ be a u.l.f.\ metric space with property A. Then the only von Neumann algebras that can embed in $\cstql(X)$ are the products $\prod_{k}\mathrm M_{n_k}(\C)$  for some countable (possibly finite) bounded collection $(n_k)_k$ in $\N$. \qed
\end{corollary}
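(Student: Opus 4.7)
The plan is to combine Corollary \ref{Cor.Main} with a nuclearity/exactness dichotomy, exactly along the lines already sketched in the paragraph preceding the statement. Since property A is never used in Corollary \ref{Cor.Main}, that corollary immediately reduces the problem: any von Neumann algebra $\cM$ admitting a $*$-homomorphic embedding into $\cstql(X)$ is already of the form $\prod_k \mathrm{M}_{n_k}(\C)$ for some countable (possibly finite) sequence $(n_k)_k$ of natural numbers. Thus the only remaining content is to exclude the possibility that $(n_k)_k$ is unbounded.

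To rule out unbounded $(n_k)_k$, I would invoke property A twice. First, by \cite[Theorem 3.3]{SpakulaZhang2020JFA}, property A implies $\cstql(X)=\cstu(X)$, so an embedding into the quasi-local algebra is the same as an embedding into the uniform Roe algebra. Second, by \cite[Theorem 5.5.7]{BrownOzawa}, property A implies that $\cstu(X)$ is nuclear. Hence any $*$-homomorphic embedding of $\cM$ into $\cstql(X)$ factors through a nuclear \cstar-algebra.

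The finishing step is a standard \cstar-algebraic fact: nuclear \cstar-algebras are exact, and \cstar-subalgebras of exact \cstar-algebras are exact (see \cite{BrownOzawa}). So the image of $\cM$ inside $\cstu(X)$, and hence $\cM$ itself, must be exact. On the other hand, \cite[Theorem A.1]{Ozawa:2003aa} shows that $\prod_k \mathrm{M}_{n_k}(\C)$ is \emph{not} exact whenever $(n_k)_k$ is unbounded. Combining these two observations forces $(n_k)_k$ to be bounded, completing the proof.

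I do not anticipate any real obstacle: this is essentially a glueing argument that assembles Corollary \ref{Cor.Main}, the Spakula--Zhang coincidence theorem, nuclearity of $\cstu(X)$ under property A, and Ozawa's non-exactness result. The one place to be slightly careful is to confirm that we do not need the embedding to be unital anywhere in this chain (both the nuclearity-implies-exactness passage and the hereditariness of exactness under \cstar-subalgebras are insensitive to unitality), and that Ozawa's statement applies to the full product $\prod_k \mathrm{M}_{n_k}(\C)$ and not merely to its block-diagonal \cstar-subalgebra $\bigoplus_k \mathrm{M}_{n_k}(\C)$; both checks are routine.
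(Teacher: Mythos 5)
Your proposal is correct and follows essentially the same route as the paper: Corollary \ref{Cor.Main} reduces to products of matrix algebras, and then property A is used via $\cstql(X)=\cstu(X)$, nuclearity of $\cstu(X)$, and Ozawa's non-exactness of $\prod_k\mathrm M_{n_k}(\C)$ for unbounded $(n_k)_k$ to rule out the unbounded case. The two points you flag as needing care (non-unitality and applying Ozawa to the full product rather than the direct sum) are indeed harmless, exactly as you say.
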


%
%
%The following remark may be of interest (only) to experts in \cstar-algebra theory; its sole purpose is to point to a roadblock on an obvious route towards improving Corollary~\ref{a case}. 
%
%\begin{remark}
%The algebra $\prod_n M_n(\bbC)$ fails to satisfy a requirement even weaker than exactness called \emph{local reflexivity} (see \cite[\S 9.1]{BrownOzawa}). As the class of locally reflexive \cstar-algebras is much larger than the class of nuclear \cstar-algebras, one might hope to improve on Corollary \ref{a case}.
%
%By \cite[Proposition 9.1.4 and 
%Exercise~9.1.1]{BrownOzawa}, local reflexivity passes to hereditary \cstar-subalgebras and quotients, and by 
%\cite[Corollary 9.1.6]{BrownOzawa} $\cB(\ell_2(\bbN))$ is not locally reflexive. It will therefore suffice to represent $\cB(\ell_2(\bbN))$ as a quotient of $\prod_n M_n(\bbC)$. To see this, identify $M_n(\bbC)$ with the algebra of linear operators on $\ell_2(\{1,2,\dots, n\})$, considered as a subalgebra of $\ell_2(\bbN)$. Fix a non-principal ultrafilter $\cU$ on $\bbN$. Take a sequence $(x_n)$ in $\prod M_n(\bbC)$ and map it to the ultraweak limit of $(x_n)$ (identified with a sequence in $\cB(\ell_2(\bbN))$) along $\cU$. This map is onto, and therefore a quotient map.

% However, exactness and local reflexivity are equivalent to nuclearity for uniform Roe algebras and quasi-local algebras by the main result of \cite{Sako:2020aa}, so there is no generality to be gained this way.
%\end{remark}

 Exactness is equivalent to nuclearity for uniform Roe algebras and quasi-local algebras by  \cite[Theorem 1.1]{Sako:2020aa}, so one cannot extend Corollary~\ref{a case} in this way.

In order to move beyond the property A case, we need more delicate methods. To motivate what comes next, we recall an important definition, due originally to Yu.

\begin{definition}\label{ghost def}
An operator $a\in\cB(\ell_2(X))$ is a \emph{ghost} if for every $\eps >0$ there is a finite $F\subseteq X$ such that $\|a\delta_x\|<\epsilon$ whenever $x\notin F$. 
\end{definition}

Compact operators are natural examples of ghosts and understanding when certain ghost operators must be compact is extremely important in coarse geometry. For instance, the following are equivalent for any u.l.f.\ metric space $X$: (1) all ghosts in $\cstql(X)$ are compact, (2) all ghosts in $\cstu(X)$ are compact, (3) $\cstql(X)$ is nuclear, (4) $\cstu(X)$ is nuclear, and (5)~$X$ has Yu's property A (see  \cite[Theorem 5.5]{Li:2021aa}, which is based on \cite[Theorem 1.3]{RoeWillett2014}, \cite[Theorem~3.3]{SpakulaZhang2020JFA}, and \cite[Theorem 5.5.7]{BrownOzawa}). 

 The following is our main result regarding the embeddability of products of matrix algebras inside quasi-local algebras (and, in particular, inside uniform Roe algebras).
 
\begin{theorem}\label{ThmBlockMatrix}
Let $X$ be a u.l.f.\ metric space, and let $(n_k)_k$ be a sequence of natural numbers that tends to infinity. Then any $*$-homomorphic embedding of $\cM=\prod_k \mathrm M_{n_k}(\bbC)$ into $\cstql(X)$ which sends $\bigoplus_k \mathrm M_{n_k}(\bbC)$ to the ideal of ghost operators sends all of $\cM$ to the ideal of ghost operators. 
\end{theorem}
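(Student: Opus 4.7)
The approach is an ideal reduction followed by a contradiction argument. Let $\cG$ denote the norm-closed two-sided $*$-ideal of ghost operators in $\cstql(X)$. Then $\cI := \Phi^{-1}(\cG)$ is a norm-closed two-sided $*$-ideal of $\cM = \prod_k \mathrm M_{n_k}(\C)$ containing $\bigoplus_k \mathrm M_{n_k}(\C)$ by hypothesis, so the conclusion $\Phi(\cM) \subseteq \cG$ is equivalent to $1_\cM \in \cI$, namely $P := \Phi(1_\cM)$ being a ghost. I assume toward contradiction that $P$ is not a ghost. Since $P$ is a projection, $P(x,x) = \|P\delta_x\|^2$, so there exist $\delta > 0$ and a sequence $(x_m) \subseteq X$ escaping every finite set, with $d(x_m, x_{m'}) \to \infty$ for $m \neq m'$ and $\|P\delta_{x_m}\| \geq \delta$.

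Next I localize via quasi-locality of $P$. For a small $\eps < \delta/4$ there is $r > 0$ with $\|\chi_A P \chi_B\| < \eps$ whenever $d(A,B) > r$; after thinning so that $d(x_m, x_{m'}) > 3r$, the balls $B_m := B(x_m, r)$ are pairwise disjoint with uniformly bounded size $|B_m| \le N$ by uniform local finiteness, and $\chi_{B_m} P \delta_{x_m}$ is within $\eps$ of $P\delta_{x_m}$. Thus the mass of $P\delta_{x_m}$ effectively lives in the $\leq N$-dimensional subspace $\ell_2(B_m)$. I then bring in the ghost hypothesis on each block: the projection $Q_k := \Phi(1_{\mathrm M_{n_k}(\C)})$ is a ghost, so $\|Q_k\delta_{x_m}\| \to 0$ in $m$ for every fixed $k$. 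A diagonal extraction yields $K_m \uparrow \infty$ with $\sum_{k \leq K_m}\|Q_k\delta_{x_m}\|^2 < \delta^2/4$; hence (assuming normality, or modulo a singular residue) the significant mass of $P\delta_{x_m}$ must be carried by factors $\mathrm M_{n_k}(\C)$ with $k > K_m$, i.e.\ by factors where $n_k$ is large.

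The main obstacle is to convert this ``tail concentration'' into a contradiction with the ghost hypothesis on $\bigoplus_k \mathrm M_{n_k}(\C)$. Indeed, the naive approach of constructing a single $a \in \bigoplus_k \mathrm M_{n_k}(\C)$ by summing orthogonal witnesses $a_m \in \mathrm M_{n_{k_m}}(\C)$ runs into a tension: for any $a = (a_k) \in \bigoplus_k \mathrm M_{n_k}(\C)$ the pairwise orthogonality of the $Q_k$ gives
\[
\|\Phi(a)\delta_{x_m}\|^2 = \sum_k \|\Phi(a_k)\delta_{x_m}\|^2 \leq \sum_k \|a_k\|^2 \|Q_k\delta_{x_m}\|^2,
\]
and since $\|a_k\| \to 0$ the diagonalization above forces the right-hand side to tend to $0$ in $m$, so no orthogonal assembly can break ghostness. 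Overcoming this barrier should require either (a) a Haagerup-style decomposition of $\Phi$ into its normal and singular parts with a separate analysis of each, or (b) a genuinely non-orthogonal construction exploiting the full matricial flexibility of $\mathrm M_{n_k}(\C)$ for $n_k$ large — for instance building, inside $\bigoplus_k \mathrm M_{n_k}(\C)$, partial isometries or matrix units whose $\Phi$-images transport vectors across the disjoint balls $B_m$, and then using quasi-locality together with $|B_m| \leq N$ to detect a rank-one piece of $P\delta_{x_m}$ in each block.

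I expect this last construction to be the crux of the proof: one must pigeonhole within the $n_{k_m}^2$ matrix units of a single block $\mathrm M_{n_{k_m}}(\C)$ to find one whose $\Phi$-image acts nontrivially on the at most $N$ basis vectors of $\ell_2(B_m)$, then piece these together compatibly across $m$ while keeping the resulting element in $\bigoplus_k \mathrm M_{n_k}(\C)$. The assumption $n_k \to \infty$ enters precisely here, providing the ``matricial room'' absent when $(n_k)$ is bounded. Reconciling the norm-decay condition defining $\bigoplus_k \mathrm M_{n_k}(\C)$ with a uniform lower bound $\|\Phi(a)\delta_{x_m}\| \geq c > 0$ along the subsequence is the substantive combinatorial-geometric difficulty I would expect to occupy the bulk of the proof.
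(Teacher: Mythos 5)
Your reduction to showing that $P=\Phi(1_{\cM})$ is a ghost, and the localization of $\|P\delta_{x_m}\|$ into balls $B_m$ of uniformly bounded cardinality, are both correct and appear (in refined form) in the paper. But the proposal stops exactly where the proof has to start: you explicitly leave open both the treatment of a possible singular part of $\Phi$ and the conversion of ``tail concentration'' into a contradiction, and these are the two substantive points. On the first: no Haagerup-style normal/singular decomposition is needed, because when $n_k\to\infty$ \emph{every} representation of $\prod_k\mathrm M_{n_k}(\C)$ on a separable Hilbert space is automatically normal (Takemoto's theorem; see Theorem \ref{takemoto the}). This is what lets one pass to the normal case outright, and it is not something your sketch supplies or could easily supply ad hoc.

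On the second, the actual contradiction is not obtained by assembling a non-ghost element of $\bigoplus_k\mathrm M_{n_k}(\C)$ --- you correctly computed that any such orthogonal assembly is doomed --- but your alternative (b) is only gestured at, and the two ingredients that make it work are missing. First, from $\|P\delta_x\|\ge\delta$ one must extract a \emph{single} diagonal matrix-unit image $q_s=\Phi(e^k_{i,i})$ with $\|q_s\delta_x\|\ge\delta'$ and $k\to\infty$ as $x\to\infty$; a priori the mass of $P\delta_x$ could be spread over infinitely many $q_s$ each contributing almost nothing, and your tail-diagonalization does not rule this out. The paper achieves this concentration via a vector measure $B\mapsto\chi_{B_r(x)}p_B\delta_x$ valued in the finite-dimensional space $\ell_2(B_r(x))$ together with an atomic Lyapunov/Shapley--Folkman lemma (Lemmas \ref{LemmaShapleyFolkman} and \ref{Claim4.6}), and this in turn requires equi-quasi-locality of \emph{all} the sub-sums $p_A$ simultaneously (Lemma \ref{Lemma.equi.0}) --- quasi-locality of the single operator $P$, which is all you use, does not suffice. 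Second, the endgame is a pigeonhole against uniform local finiteness: the $n_k$ partial isometries $\Phi(e^k_{j,i})$, $j=1,\dots,n_k$, all being $(\delta'/2)$-$r$-quasi-local for one common $r$ (again Lemma \ref{Lemma.equi.0}), each sends $\delta_x$ to a vector with a coordinate of size $\ge\gamma$ at some point of $B_r(x)$, while orthogonality of their range projections bounds by $\gamma^{-2}$ the number of $j$ that can share a given point; hence $|B_r(x)|\ge\gamma^2 n_k\to\infty$, contradicting u.l.f. Without the automatic-normality input, the Lyapunov concentration step, and the equi-quasi-locality of the whole image unit ball, the argument as proposed cannot be completed.
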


Theorem \ref{ThmBlockMatrix} is a corollary of a more technical result which does not require the embedding to send $\bigoplus_{k}\mathrm M_{n_k}(\C)$ to the ghost operators. Precisely, using the notation of Theorem \ref{ThmBlockMatrix}, for each $k\in \N$ and $i\in \{1,...,n_k\}$, let $e_{i,i}^k$ denote the usual diagonal matrix unit in $\mathrm M_{n_k}(\C)$. We show that if $\Phi:\prod_k \mathrm M_{n_k}(\bbC)\to \cstql(X)$ is an embedding, then the collection $\{\Phi(e_{i,i}^k)\mid k\in \N,i\in \{1,...,n_k\}\}$ is \emph{asymptotically a ghost}; this is a technical weakening of the assertion that $\Phi(1_{\cM})$ is a ghost (see Definition~\ref{Def.Asymptotically} and Remark~\ref{RemarkAsympGhosts}). 

The following immediate consequence of Theorem~\ref{ThmBlockMatrix} is worth noting. 

\begin{corollary} \label{Cor.Main.2}
Suppose that $X$ is a u.l.f.\ metric space such that $\cstql (X)$ contains no noncompact ghost projections. Assume that a von Neumann algebra $\cM$ $*$-homomorphically embeds into $\cstql(X)$ by a map sending minimal projections to compact operators. Then $\cM$ is of the form $\prod_{k}\mathrm M_{n_k}(\C)$  for some countable (possibly finite) bounded collection $(n_k)_k$ in $\N$.
\end{corollary}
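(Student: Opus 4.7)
The plan is to argue by contradiction. Corollary \ref{Cor.Main} already forces $\cM \cong \prod_k \mathrm{M}_{n_k}(\C)$ for some countable (possibly finite) collection $(n_k)_k$, so it suffices to rule out the case in which this sequence is unbounded. Assuming the contrary, extract indices $k_1 < k_2 < \cdots$ with $n_{k_j} \to \infty$, let $p \in \cM$ be the central projection onto these coordinates, so that $p\cM p \cong \prod_j \mathrm{M}_{n_{k_j}}(\C)$, and write $\Phi \colon \cM \to \cstql(X)$ for the given embedding. The strategy is to apply Theorem \ref{ThmBlockMatrix} to $\Phi\rs p\cM p$ and then invoke the ghost-projection hypothesis on $\cstql(X)$ to reach a contradiction.

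First I would verify the hypothesis of Theorem \ref{ThmBlockMatrix}: that $\Phi$ sends $\bigoplus_j \mathrm{M}_{n_{k_j}}(\C)$ into the ideal of ghost operators. By assumption, every minimal projection of $\cM$ maps into the ideal $\cK(\ell_2(X))$ of compact operators. For each matrix unit $e^{k_j}_{i,l} \in \mathrm{M}_{n_{k_j}}(\C)$, the factorization $e^{k_j}_{i,l} = e^{k_j}_{i,l} \cdot e^{k_j}_{l,l}$, together with the fact that $\cK(\ell_2(X))$ is a two-sided ideal of $\cB(\ell_2(X))$, shows that $\Phi(e^{k_j}_{i,l})$ is compact. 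Since the linear span of these matrix units is norm-dense in the $c_0$-sum $\bigoplus_j \mathrm{M}_{n_{k_j}}(\C)$ and $\Phi$ is norm-continuous, the image $\Phi\bigl(\bigoplus_j \mathrm{M}_{n_{k_j}}(\C)\bigr)$ sits in $\cK(\ell_2(X))$, which is contained in the ghost ideal.

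Theorem \ref{ThmBlockMatrix} then applies and yields that $\Phi(p\cM p)$ consists entirely of ghost operators. In particular $\Phi(p)$ is a ghost projection, so by the standing assumption it must be compact, hence of finite rank. But then
\[
\Phi(p\cM p) \subseteq \Phi(p)\cstql(X)\Phi(p) \subseteq \Phi(p)\cB(\ell_2(X))\Phi(p)
\]
is contained in a finite-dimensional full matrix algebra, contradicting the injectivity of $\Phi$ since $p\cM p \cong \prod_j \mathrm{M}_{n_{k_j}}(\C)$ is infinite-dimensional. The main delicacy is the bridging step in the middle paragraph---translating the pointwise hypothesis on minimal projections into the global ghost condition required by Theorem \ref{ThmBlockMatrix}; the remainder is a direct chase through the hypotheses.
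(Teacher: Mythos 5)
Your proof is correct and follows essentially the same route as the paper's: reduce to $\prod_k \mathrm{M}_{n_k}(\C)$ via Corollary \ref{Cor.Main}, upgrade the hypothesis on minimal projections to the statement that $\bigoplus_k \mathrm{M}_{n_k}(\C)$ lands in the ghost ideal, and then play Theorem \ref{ThmBlockMatrix} off against the no-noncompact-ghost-projections hypothesis. You are in fact slightly more careful than the paper, which applies Theorem \ref{ThmBlockMatrix} to a merely unbounded (rather than divergent) sequence and leaves the subsequence/corner reduction and the final finite-rank contradiction implicit.
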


The assumption that $X$ contains no noncompact ghost projections is strictly weaker than property A: see for example \cite[Theorem 5.3]{BragaChungLi2019}. Nonetheless, there are many interesting u.l.f. spaces that do not satisfy this assumption: most prominently, spaces containing expanders in a suitable sense will not satisfy it (compare \cite[pages 348-349]{HigsonLafforgueSkandalis2002GAFA}).

 Corollary \ref{Cor.Main.2} is our most complete result on Conjecture \ref{big con}. We do not know whether the geometric assumption on $X$ or the assumption on the embedding in Corollary~\ref{Cor.Main.2} can be weakened or even completely removed.

\subsection{Methods}\label{Subsection.Methods} 
Our strategy to obtain Theorems \ref{ThmLInftyQL} and \ref{ThmBlockMatrix} starts with first proving them under the stronger hypothesis that the $*$-ho\-mo\-mor\-phic embeddings are \emph{normal} (see \S\ref{S.normal} for definitions). We then get rid of this extra assumption by showing that if $L_\infty[0,1]$ $*$-homomorphically embeds into $\cB(H)$ for some separable Hilbert space $H$, then it does so by an embedding which is also normal (see Proposition \ref{P.SOT}). Our proof is completed by using a useful fact that does not seem to appear explicitly in the literature (although it is proved by assembling known results): when $\cM$ is a von Neumann algebra with no direct summands of the form $\mathrm M_n(\cN)$ for $n\geq 1$ and an infinite-dimensional abelian von Neumann algebra $\cN$, then every representation of $\cM$ on a separable Hilbert space is automatically normal (Theorem \ref{T.Normal}; the converse is also true). For expository reasons, we leave both discussions about the normality of $*$-homomorphic embeddings to the last section of this paper,~\S\ref{S.WOT}.

We now discuss the main ideas in the proofs of Theorems \ref{ThmLInftyQL} and \ref{ThmBlockMatrix} under the extra assumption of normality of the embeddings.

Our proof for Theorem \ref{ThmLInftyQL} strongly depends on working with \emph{corona algebras}. The quasi-local algebra of a u.l.f.\ metric space always contains the ideal of compact operators $\cK(\ell_2(X))$; therefore, we can look at the \emph{quasi-local corona algebra} $\cstql(X)/\mathcal K(\ell_2(X))$ (see Definition~\ref{Defi.Corona.Alg}). Using results from \cite{SpakulaZhang2020JFA} and \cite{JohPar}, we can then identify the center of $\cstql(X)/\mathcal K(\ell_2(X))$ with another important corona algebra: the Higson corona of $X$ (see Definition \ref{Defi.Higson.Corona}). Therefore, by showing that the Higson corona contains $2^{\aleph_0}$ orthogonal positive non-zero contractions (see Lemma \ref{L.HC}), we obtain that the center of $\cstql(X)/\mathcal K(\ell_2(X))$ cannot be separably represented. 

We then use Johnson--Parrott's Theorem \cite{JohPar} (see Theorem \ref{ThmJohsonParrott}) to show that if a \cstar-subalgebra $\cM\subseteq \cB(\ell_2(X))$ contained in $\cstql(X)$ is isomorphic to a diffuse abelian von Neumann algebra, then its commutant $\cM'$ must contain a copy of the nonseparably represented \cstar-algebra $\cstql(X)/\mathcal K(\ell_2(X))$. This leads to a contradiction since $\cM'$ is clearly separably represented.

To prove Theorem \ref{ThmBlockMatrix}, we must evoke the idea of subsets of $\cstql(X)$ being inside of this algebra in an ``equi-way''. We also obtain analogous equi-results for the uniform Roe algebras. To state them, we must first recall the relevant technical definitions. These were first codified in \cite[Definition 4.3]{BragaFarah2018Trans} and \cite[Definition~3.5]{BragaFarahVignati2019Comm}.

\begin{definition}\label{Defi.Equi.Sets}
Let $X$ be a metric space.
\begin{enumerate}
\item Given $\eps,r>0$, we say that $a\in \cB(\ell_2(X))$ is \emph{$\eps$-$r$-quasi-local} if for all $A,B\subseteq X$, with $d(A,B)>r$, we have that $\|\chi_Aa\chi_B\|\leq \eps$.
\item Given $\eps,r>0$, we say that $a\in \cB(\ell_2(X))$ is \emph{$\eps$-$r$-approximable} if there is $b\in \cB(\ell_2(X))$, with $\propg(b)\leq r$, such that $\|a-b\|\leq \eps$.
\item A subset $S\subseteq \cB(\ell_2(X))$ is \emph{equi-quasi-local} if for all $\eps>0$ there is $r>0$ such that every contraction $a\in S$ is $\eps$-$r$-quasi-local. $S$ is \emph{equi-approximable} if for all $\eps>0$ there is $r>0$ such that every contraction $a\in S$ is $\eps$-$r$-approximable.
\end{enumerate}
\end{definition}

The study of such ``equi-sets'' in both quasi-local and uniform Roe algebras has proven to be very useful in the study of those algebras. For instance, they have been essential in the solution of the rigidity problem (see \cite{SpakulaWillett2013,BaudierBragaFarahKhukhroVignatiWillett2021uRaRig}), as well as in the study of derivations and Hochschild homology of uniform Roe algebras (\cite{lorentz2020bounded,lorentz2021hochschild}). For the rigidity problem, it was important to show that if a weak operator topology closed subalgebra of $\cstql(X)$ (resp. $\cstu(X)$) is isomorphic to $\ell_\infty(\N)$, then its unit ball is equi-quasi-local (resp. equi-approximable); see \cite[Lemma 3.1]{SpakulaWillett2013} and \cite[Lemma 4.9]{BragaFarah2018Trans}, respectively. In this paper, we give a new version of both of these results that seems more practically useful.
 
Precisely, the next lemma is our main technical result about equi-sets in quasi-local and uniform Roe algebras. We emphasize that the metric space~$X$ in this lemma can be arbitrary (i.e., it is not required to be u.l.f.\ or even locally finite). The following is proved in \S\ref{SectionEqui}.

\begin{lemma} \label{Lemma.equi.0} 
Let $X$ be a metric space and let $\cM\subseteq \cB(\ell_2(X))$ be a $*$-subalgebra closed in the weak operator topology and containing a unit $1_{\cM}$. Suppose there is an increasing sequence $(p_n)_n$ of central projections in $ \cM$ that converges to $1_{\cM}$ in the strong operator topology and each $p_n \cM p_n$ is finite-dimensional.
\begin{enumerate}
\item\label{Lemma.equi.Item1} If $\cM\subseteq \cstql(X)$, then the unit ball of $\cM$ is equi-quasi-local.
\item\label{Lemma.equi.Item2} If $\cM\subseteq \cstu(X)$, then the unit ball of $\cM$ is equi-approximable.
\end{enumerate}
\end{lemma}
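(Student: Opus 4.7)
The plan is to prove (1) by contradiction: assuming the unit ball of $\cM$ fails to be equi-quasi-local, I will construct a single element $a \in \cM$ that is not quasi-local, contradicting $\cM \subseteq \cstql(X)$. Part (2) is analogous with \emph{quasi-local} replaced by \emph{approximable} throughout.

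Setting $q_n := p_n - p_{n-1}$ (with $p_0 := 0$) produces an orthogonal sequence of central projections in $\cM$ with $\sum_n q_n = 1_\cM$ in SOT and each $q_n\cM$ finite-dimensional; so $\cM \cong \prod_n q_n\cM$. A key preliminary claim is that if the unit ball of $\cM$ fails to be equi-quasi-local, then so does the unit ball of every complementary corner $(1-p_m)\cM$. Indeed $p_m\cM$ is a finite-dimensional $*$-subalgebra of $\cstql(X)$, so its unit ball is norm-compact and its elements are individually quasi-local; this forces the unit ball of $p_m\cM$ to be equi-quasi-local, so by the direct-sum decomposition $\cM = p_m\cM \oplus (1-p_m)\cM$ the failure is inherited by $(1-p_m)\cM$.

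Using this claim, I recursively select strictly increasing $K_0 < K_1 < \ldots$, contractions $a_n \in q_n\cM$ with $q_n := p_{K_n} - p_{K_{n-1}}$, and subsets $A_n, B_n \subseteq X$ with $d(A_n, B_n) > r_n$ and $\|\chi_{A_n} a_n \chi_{B_n}\| > \eps/2$. At step $n$ the claim applied to $(1-p_{K_{n-1}})\cM$ yields a witness $b_n \in (1-p_{K_{n-1}})\cM$ at a large violation scale $r_n$; SOT convergence $p_K b_n p_K \to b_n$ then lets me choose $K_n$ large enough that $a_n := p_{K_n} b_n p_{K_n}$ preserves the estimate, and since $b_n$ already kills $p_{K_{n-1}}$ (by centrality of $p_{K_{n-1}}$) we have $a_n = q_n b_n \in q_n\cM$, a contraction. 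The scales $r_n$ are chosen to absorb past interference: each previously chosen $a_m$ ($m < n$) is individually quasi-local with its own scale $t_m(\delta)$, so taking $r_n > \max_{m<n} t_m(\eps / 2^{m+3})$ makes $\sum_{m<n} \|\chi_{A_n} a_m \chi_{B_n}\| < \eps/8$.

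I then define $a := \sum_n a_n$, which converges in SOT since the $a_n$ have pairwise orthogonal central supports, and $\|a\| = \sup_n \|a_n\| \le 1$; the element lies in $\cM$ by WOT-closure. Writing $\chi_{A_n} a \chi_{B_n} = \chi_{A_n} a_n \chi_{B_n} + \sum_{m \ne n} \chi_{A_n} a_m \chi_{B_n}$ and using the reverse triangle inequality, the contradiction follows once we verify the interference sum has norm at most $\eps/4$. The past contribution is handled above; the future contribution ($m > n$) is the main technical obstacle. I plan to exploit the factorization $a_m = q_m a_m q_m$ to get $\|\chi_{A_n} a_m \chi_{B_n}\| \le \|\chi_{A_n} q_m\| \cdot \|q_m \chi_{B_n}\|$ and control these via the individual quasi-locality of $q_m$, arranging during the recursion that the future sets $A_m, B_m$ lie in regions where the previously fixed $q_n$ act negligibly. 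The hard part will be making this future-proofing compatible with the freedom of choice granted by the failure of equi-quasi-locality in $(1-p_{K_{n-1}})\cM$; once this is achieved, $\|\chi_{A_n} a \chi_{B_n}\| \ge \eps/4$ for all $n$, which with $d(A_n, B_n) \to \infty$ contradicts $a \in \cstql(X)$. Part (2) follows the same template, with propagation-approximation scales in place of quasi-locality scales and $\cstu(X)$ replacing $\cstql(X)$, noting that finite-propagation operators are in particular quasi-local so the same interference estimates apply.
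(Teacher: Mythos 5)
Your overall strategy---a gliding-hump construction of a single element $a=\SOTh\sum_n a_n\in\cM$ that fails to be quasi-local---is viable, but it is genuinely different from the paper's proof, which shows that the sets $\mathrm{QL}(\eps/4,r)\cap\cM_1$ are WOT-closed with empty interior in the WOT-compact unit ball and then invokes the Baire category theorem. The Baire route performs the ``all scales at once'' bookkeeping for free; your route must do it by hand, and that is exactly where your write-up has a genuine gap, at the point you yourself flag as ``the hard part''. The future interference terms $\|\chi_{A_n}a_m\chi_{B_n}\|$ for $m>n$ are never actually controlled: the mechanism you sketch (choosing the \emph{future} sets $A_m,B_m$ to avoid regions where the previously fixed $q_n$ act, and bounding $\|\chi_{A_n}a_m\chi_{B_n}\|\le\|\chi_{A_n}q_m\|\,\|q_m\chi_{B_n}\|$ ``via the individual quasi-locality of $q_m$'') points in the wrong direction. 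The problem is the contribution of the yet-to-be-chosen $a_m$ inside the already-fixed windows $\chi_{A_n}(\cdot)\chi_{B_n}$, and quasi-locality of $q_m$ only controls $\|\chi_Aq_m\chi_B\|$ for far-apart $A,B$; the quantity $\|\chi_{A_n}q_m\|$ can perfectly well equal $1$ for every $m$.

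The gap is fixable. Since $\|\chi_Aa\chi_B\|=\sup\{\|\chi_{A'}a\chi_{B'}\|\mid A'\subseteq A,\ B'\subseteq B\text{ finite}\}$, you may take each witness pair $A_n,B_n$ to be \emph{finite}. After the harmless reduction to $1_{\cM}=1$ (replace $\cM$ by $\cM\oplus\C(1-1_{\cM})$, as the paper does), $p_K\to 1$ in SOT gives $\|(1-p_K)\chi_{A_n}\|\to 0$ in \emph{norm} because $\chi_{A_n}$ has finite rank; since $a_m=(1-p_{K_{m-1}})a_m$, choosing $K_{m-1}$ large relative to the finitely many already-fixed finite sets yields $\|\chi_{A_n}a_m\chi_{B_n}\|\le\|(1-p_{K_{m-1}})\chi_{A_n}\|<\eps\,2^{-m-3}$ for all $n<m$. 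With that, your treatment of the past terms is fine, the reverse triangle inequality gives $\|\chi_{A_n}a\chi_{B_n}\|\ge\eps/4$ with $d(A_n,B_n)\to\infty$, and the contradiction with $a\in\cstql(X)$ goes through. (The step ``$a_n:=p_{K_n}b_np_{K_n}$ preserves the estimate'' is justified by WOT-lower semicontinuity of the norm, as $p_Kb_np_K\to b_n$ strongly; it is worth saying so explicitly.) The remaining ingredients---the passage of the failure to the corners $(1-p_m)\cM$ via norm-compactness of the ball of $p_m\cM$, and the convergence and membership of $a$ in $\cM$---are correct and essentially coincide with the paper's Claim on tails and its Lemma on compact sets.
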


By using the standard type decomposition of von Neumann agebras in Proposition~\ref{PropvNaStruc}, Theorem \ref{ThmLInftyQL} and Lemma \ref{Lemma.equi.0} allow us to obtain the following.

\begin{theorem}\label{Thm.nVa.Equi-Approx.Unit.Ball} 
Let $X$ be a u.l.f.\ metric space. Let $\cM\subseteq \cB(\ell_2(X))$ be a weak operator topology closed \cstar-subalgebra. 
\begin{enumerate}
\item If $\cM\subseteq \cstql(X)$, then the unit ball of $\cM$ is equi-quasi-local.
\item If $\cM\subseteq \cstu(X)$, then the unit ball of $\cM$ is equi-approximable.
\end{enumerate}
\end{theorem}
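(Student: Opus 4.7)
The plan is to combine Corollary \ref{Cor.Main} with Lemma \ref{Lemma.equi.0}: the former pins down the abstract structure of $\cM$, and the latter converts that structure into the desired equi-property. In both parts we have $\cM\subseteq \cstql(X)$ (since $\cstu(X)\subseteq\cstql(X)$), and a WOT-closed $*$-subalgebra of $\cB(\ell_2(X))$ is automatically a von Neumann algebra, carrying its own unit $1_{\cM}\leq 1_{\cB(\ell_2(X))}$ (namely the projection onto the closure of $\cM\ell_2(X)$). The inclusion $\cM\hookrightarrow \cstql(X)$ is then a $*$-homomorphic embedding of a von Neumann algebra into $\cstql(X)$, so by Corollary \ref{Cor.Main} there is a $*$-isomorphism $\pi:\cM\xrightarrow{\cong}\prod_k \mathrm{M}_{n_k}(\C)$ for some countable (possibly finite) collection $(n_k)_k\subseteq\N$.

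For each $k$, let $q_k\in\prod_k \mathrm{M}_{n_k}(\C)$ denote the minimal central projection onto the $k$-th direct summand, put $\tilde q_k=\pi^{-1}(q_k)\in\cM$, and set $p_n=\sum_{k\leq n}\tilde q_k$. Then $(p_n)_n$ is an increasing sequence of central projections in $\cM$, and $\pi$ restricts to a $*$-isomorphism $p_n\cM p_n\cong\prod_{k\leq n}\mathrm{M}_{n_k}(\C)$, so each compression $p_n\cM p_n$ is finite-dimensional. Hence Lemma \ref{Lemma.equi.0} will apply as soon as we verify that $p_n\to 1_{\cM}$ in the SOT.

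For this, note that in any $C^*$-algebra the supremum of a bounded increasing sequence of self-adjoints, when it exists, is characterised algebraically as the least upper bound in the order induced by the positive cone, and is therefore preserved by every $*$-isomorphism. In $\prod_k \mathrm{M}_{n_k}(\C)$ we have $\sup_n \sum_{k\leq n}q_k = 1$, so $\sup_n p_n = 1_{\cM}$ in $\cM$. Since $\cM$ is WOT-closed in $\cB(\ell_2(X))$ it is a von Neumann subalgebra of $\cB(\ell_2(X))$, and for such a subalgebra the order-supremum of a bounded increasing net of self-adjoints in $\cM$ coincides with its SOT-limit in $\cB(\ell_2(X))$; thus $p_n\to 1_{\cM}$ in the SOT. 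Lemma \ref{Lemma.equi.0} then yields parts (1) and (2) of the theorem directly.

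The substantive content is entirely in Corollary \ref{Cor.Main} and Lemma \ref{Lemma.equi.0}; the main thing to be careful about is the last step, where the abstract supremum $\sup_n p_n$ inside $\cM$ has to be recognised as the SOT-limit of $(p_n)_n$ in the ambient $\cB(\ell_2(X))$. This is where the WOT-closure hypothesis is used in an essential way — it is what makes $\cM$ a genuine von Neumann subalgebra, so that its internal order-suprema coincide with SOT-limits in $\cB(\ell_2(X))$; for a merely norm-closed $*$-subalgebra neither the representation of $\cM$ as $\prod_k \mathrm{M}_{n_k}(\C)$ nor the passage from order-sup to SOT-limit would be available.
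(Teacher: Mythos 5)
Your proof is correct and follows essentially the same route as the paper: identify $\cM$ as a countable product of matrix algebras and then feed the resulting increasing sequence of central projections into Lemma~\ref{Lemma.equi.0}; your explicit construction of the $p_n$ and the verification that $\SOTh\lim p_n=1_{\cM}$ (via Vigier's theorem and WOT-closedness) is exactly what the paper packages as Proposition~\ref{Corollary.Lemma.equi}. The only cosmetic difference is that you cite Corollary~\ref{Cor.Main}, which relies on the automatic-normality results of \S\ref{S.WOT}, whereas the paper gets the structure of $\cM$ more economically from Theorem~\ref{ThmLInftyWeakStar} combined with Proposition~\ref{PropvNaStruc}, the point being that the inclusion of a WOT-closed subalgebra is automatically normal; both routes are non-circular.
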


Besides the equi-methods described above, our proof of Theorem \ref{ThmBlockMatrix} needs another ingredient: finite-dimensional vector measures. To put this in proper context, the rigidity problem for uniform Roe algebras asked the following: given u.l.f.\ metric spaces $X$ and $Y$ with $*$-isomorphic uniform Roe algebras, does it follow that $X$ and $Y$ are coarsely equivalent? Theorem 1.2 of \cite{BaudierBragaFarahKhukhroVignatiWillett2021uRaRig} provides a positive answer to this question; the main novelty in its proof was the study of certain finite-dimensional vector measures and the use of an atomic version of the classic Lyapunov convexity theorem. In \S\ref{SectionProdMatrix}, we further develop this method obtaining a stronger technical lemma which applies to a wider range of scenarios (see Lemma~\ref{LemmaCorollaryMeasureURA}).

\subsection*{Outline of the paper}

In \S\ref{Section.Prelim} we establish some notational preliminaries and give additional background. \S\ref{SecLInfty} shows non-existence of normal embeddings of diffuse abelian von Neumann algebras into $\cstql(X)$ using corona methods. \S\ref{SectionEqui} contains our results on equi-sets; the main techniques here are based on elementary von Neumann algebra theory and the Baire category theorem. \S\ref{SectionProdMatrix} goes into detail on asymptotic ghosts and embeddings of products of matrix algebras using vector measure techniques. Finally, \S\ref{S.WOT} gives our results on automatic normality for representations of von Neumann algebras on separable Hilbert spaces; the techniques used here are again von Neumann algebraic in character.

\section{Preliminaries} \label{Section.Prelim}
 
\subsection{Basic definitions}\label{S.uRas} 
Most of the basic definitions and terminology nee\-ded for this paper were already introduced in \S\ref{Section.Intro}. Here, we only present what is left to introduce. The \cstar-algebra $\ell_\infty(X)$ of all bounded functions from~$X$ to $\C$ is identified with the multiplication operators in $\cB(\ell_2(X))$ in the canonical way: if $a=(a_x)_{x\in X}\in \ell_\infty(X)$ and $\xi=(\xi_x)_{x\in X}\in \ell_2(X)$, then $a\xi=(a_x\xi_x)_{x\in X}$. In other words, $a\in \ell_\infty(X)$ if and only if $\propg(a)=0$. As such $\ell_\infty(X)$ identifies in a canonical way with a maximal abelian subalgebra of $\cstql(X)$. We denote by $\cK(H)$ the compact operators on a Hilbert space~$H$, and note that $\cK(\ell_2(X))$ is contained in both $\cstu(X)$ and $\cstql(X)$ as the unique minimal ideal.

We write WOT (respectively SOT, SOT$^*$) for the weak (respectively strong, strong-$^*$) operator topology on $\cB(H)$. We write ``$\SOTh\sum$'' when we want to be clear that a given sum converges in the strong operator (as opposed to norm) topology. 

We follow the standard conventions of the subject: a \emph{von Neumann algebra} will always be a concrete \cstar-algebra $\cM$ on some Hilbert space $H$ that is closed in the weak operator topology, and such that the unit of $\cM$ agrees with the unit of $\cB(H)$. A \emph{W$^*$-algebra} is an abstract \cstar-algebra that is isomorphic to some von Neumann algebra; equivalently, a W$^*$-algebra is a \cstar-algebra which is a dual Banach space. We write $\cM_1$ for the closed unit ball of a von Neumann algebra (or \cstar-algebra), and $\cZ(\cM)$ for its center. For a family $(p_s)_s$ of projections in a von Neumann algebra, $\bigvee p_s$ denotes their least upper bound, and $\bigwedge p_s$ their greatest lower bound; these always exist.

\subsection{Normal embeddings}\label{S.normal} Von Neumann algebras carry many natural topologies (see for example \cite[I.8.6]{Black:Operator}). In particular, in addition to the WOT and SOT already mentioned, there are the so-called $\sigma$-weak and $\sigma$-strong topologies (also called the ultraweak and ultrastrong topologies). These are important as they are intrinsic to the $*$-algebraic structure (see for example \cite[Corollary III.3.10]{Tak:TheoryI}), unlike the WOT and SOT, which depend on the representation; however the WOT (respectively SOT) agrees with the $\sigma$-weak (respectively $\sigma$-strong) topology on bounded sets, so often one can elide the distinction between the two. 

Fortunately, the several natural notions of continuity for $*$-homomorphisms between von Neumann algebras turn out to be the same: we record this below.

\begin{proposition}\label{n cons}
Let $\Phi:\cM\to \cN$ be a $*$-homomorphism between von Neumann algebras. Then the following are equivalent.\footnote{In each of the instances \eqref{2.normal}--\eqref{5.normal} both the domain and the codomain are considered with respect to the same topology.} 
\begin{enumerate}
\item \label{1.normal} $\Phi$ is normal: if $(a_i)_{i\in I}$ is a bounded increasing net of positive elements of $\cM$, then $\Phi(\sup_{i\in I} a_i)=\sup_{i\in I} \Phi(a_i)$. 
\item \label{6.normal} For any collection of orthogonal projections $(p_i)_i$ in $\cM$, 
\[
\textstyle\Phi(\SOTh\sum_{i\in I} p_i)=\SOTh\sum_{i\in I} \Phi(p_i).
\]
\item \label{2.normal} $\Phi$ is $\sigma$-weakly continuous.
\item \label{3.normal} $\Phi$ is $\sigma$-strongly continuous.
\item \label{4.normal} The restriction of $\Phi$ to the unit ball of $\cM$ is $\sigma$-strongly continuous. 
\item \label{5.normal} $\Phi$ is continuous with respect to the weak$^*$-topologies induced by the unique preduals (see for example \cite[III.2.4.1]{Black:Operator}) of $\cM$ and~$\cN$. 
\end{enumerate}
\end{proposition}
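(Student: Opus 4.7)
The plan is to establish the equivalences by pivoting around condition (1), which is the classical notion of normality. The cleanest connection is between (2) and (5): the $\sigma$-weak topology on a von Neumann algebra is, by definition, the weak$^*$-topology induced by its unique predual, so (2) $\Leftrightarrow$ (5) is immediate. The classical equivalence (1) $\Leftrightarrow$ (2) is standard: normality of $\Phi$ implies that $\omega\circ\Phi$ is a normal functional on $\cM$ for every normal $\omega\in\cN_*$, which gives $\sigma$-weak continuity via pullback of the defining seminorms; conversely, any bounded increasing net of positives converges $\sigma$-weakly to its supremum, so $\sigma$-weak continuity of $\Phi$ forces normality.

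For (2) $\Leftrightarrow$ (3), the key point is that the $\sigma$-weak and $\sigma$-strong topologies on a von Neumann algebra have the same continuous linear functionals, namely those in the predual. For a $*$-homomorphism $\Phi$, the pullback of the $\sigma$-strong seminorm $a\mapsto \omega(a^*a)^{1/2}$ via $\Phi$ equals $b\mapsto (\omega\circ\Phi)(b^*b)^{1/2}$ (using $\Phi(b)^*\Phi(b) = \Phi(b^*b)$), so $\sigma$-strong continuity of $\Phi$ also reduces to $\omega\circ\Phi$ being normal for each normal positive $\omega\in\cN_*$, giving (2) $\Leftrightarrow$ (3). The implication (3) $\Rightarrow$ (4) is trivial. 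For (4) $\Rightarrow$ (1), a bounded increasing net $(a_i)_i$ of positives with supremum $a$ can be rescaled to lie in the unit ball, where it converges $\sigma$-strongly to $a$; applying (4) gives $\Phi(a_i)\to\Phi(a)$ $\sigma$-strongly, and since $\Phi$ is order-preserving the net $(\Phi(a_i))_i$ is increasing, so this forces $\sup_i \Phi(a_i) = \Phi(a)$.

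Finally I would handle (1) $\Leftrightarrow$ (6). The direction (1) $\Rightarrow$ (6) is immediate: for any family $(p_i)_{i\in I}$ of pairwise orthogonal projections, the net of finite partial sums is an increasing bounded net of projections whose supremum equals $\SOTh\sum_i p_i$, so (1) yields (6) upon noting that the $\Phi(p_i)$ are themselves orthogonal projections, making their supremum coincide with their $\SOTh$-sum. For (6) $\Rightarrow$ (1), I would first reduce to showing that (6) implies preservation of suprema of bounded increasing nets of projections: given such a net $(q_i)_i$ with supremum $q$, after extracting a well-ordered cofinal subnet $(q_{i_\alpha})_\alpha$ one writes $q$ as the $\SOTh$-sum of the pairwise orthogonal differences $q_{i_{\alpha+1}}-q_{i_\alpha}$, and (6) together with finite additivity of $\Phi$ on projections yields $\Phi(q)=\bigvee_\alpha \Phi(q_{i_\alpha})$. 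The main obstacle, and the step most deserving of care, is the passage from preservation of suprema of increasing nets of projections to preservation of suprema of arbitrary bounded increasing nets of positives; here I would use the spectral theorem to approximate any bounded positive in norm by finite positive real linear combinations of its spectral projections uniformly in the net parameter, reducing the supremum statement for $(a_i)_i$ to the one for increasing families of spectral projections already handled.
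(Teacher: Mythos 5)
Your handling of the equivalences among (1)--(5) is correct in outline: it amounts to re-proving the standard facts that the paper simply cites from \cite[III.2.2.2]{Black:Operator} (that bounded increasing nets of positives converge $\sigma$-strongly to their suprema, that the $\sigma$-weakly and $\sigma$-strongly continuous functionals both coincide with the predual, and the pullback-of-seminorms computation using $\Phi(b)^*\Phi(b)=\Phi(b^*b)$). The implication (1)$\Rightarrow$(6) is also fine.

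The direction (6)$\Rightarrow$(1) is where your argument has a genuine gap, in both of its steps. First, a bounded increasing net of projections indexed by a general directed set need not admit a well-ordered cofinal subnet: for instance the net $(\chi_F)_{F\in[\omega_1]^{<\omega}}$ in $\ell_\infty(\omega_1)$ has supremum $1$, but any well-ordered chain of finite sets under inclusion has order type at most $\omega$ and hence countable union, so no well-ordered subfamily is cofinal. This particular step can be repaired (e.g.\ by a maximal-orthogonal-family argument), but the second step cannot: the map $a\mapsto\chi_{[t,\infty)}(a)$ is not monotone for the operator order, so for an increasing net $(a_i)_i$ of positives the spectral projections $\chi_{[t,\infty)}(a_i)$ do \emph{not} form increasing families, and there is no ``increasing family of spectral projections'' to which your projection case applies; approximating each $a_i$ in norm by combinations of its own spectral projections produces mutually incomparable families as $i$ varies. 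This failure of monotonicity is precisely why ``completely additive on projections implies normal'' is a genuine theorem rather than a routine reduction. The paper sidesteps the difficulty entirely: assuming (6), it composes $\Phi$ with an arbitrary normal functional $\psi\in\cN_*$, observes that $\psi\circ\Phi$ is completely additive on orthogonal projections, invokes \cite[Corollary III.3.11]{Tak:TheoryI} (complete additivity implies normality \emph{for functionals}) to conclude $\psi\circ\Phi\in\cM_*$, and deduces that $\Phi^*$ maps $\cN_*$ into $\cM_*$, i.e.\ condition (5) holds. If you want to keep your structure, you should replace your direct spectral argument with this composition-with-functionals device, or explicitly cite the functional-level theorem whose content your sketch is implicitly assuming.
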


\begin{proof}
Parts \eqref{1.normal}, \eqref{2.normal}, \eqref{3.normal}, and \eqref{4.normal} are equivalent by \cite[III.2.2.2]{Black:Operator}. Parts \eqref{2.normal} and \eqref{5.normal} are equivalent as the weak-$*$ and $\sigma$-weak topologies are the same (see for example \cite[I.8.6.2 and III.2.4.1]{Black:Operator}). Condition \eqref{1.normal} clearly implies \eqref{6.normal}. Finally, assume that condition \eqref{6.normal} holds. Then for any normal linear functional $\psi:\cN\to \C$ and orthogonal collection of projections $(p_i)_i$, $\psi\circ \Phi(\sum p_i)=\sum \psi\circ \Phi(p_i)$. Hence $\psi\circ \Phi$ is also normal by \cite[Corollary III.3.11]{Tak:TheoryI}. As the normal linear functionals are exactly the elements of the predual of a von Neumann algebra (see \cite[Page 244]{Black:Operator}) this says that the dual map $\Phi^*:\cN^*\to \cM^*$ restricts to a map $\cN_*\to \cM_*$. Hence $\Phi$ is weak-$*$ continuous, i.e.\ satisfies \eqref{5.normal}.
\end{proof}

Note that a normal $*$-homomorphism $\Phi$ is not necessarily WOT-continuous. For example, if $H$ is an infinite-dimensional Hilbert space then the embedding of $\cB(H)$ into $\cB(H\otimes H)$ that sends $a$ to $a\otimes 1_H$ is not WOT-continuous. (This example really goes to show that WOT is not intrinsic to a given von Neumann algebra, as it depends on the ambient space.) Finally, the restriction of an isomorphism between von Neumann algebras to the unit ball of the domain is continuous in the SOT, WOT, and SOT$^*$-topologies (see \cite[\S III.2.1.14]{Black:Operator}). 

\begin{definition} 
A $*$-homomorphism from a von Neumann algebra into either $\cstu(X)$ or $\cstql(X)$ is called a \emph{normal embedding} if its kernel is trivial and it satisfies any of the conditions from Proposition \ref{n cons} above when thought of as a $*$-homomorphism from $\cM$ into $\cB(\ell_2(X))$. 
\end{definition}

\subsection{`Obvious' embeddings}

Let $X$ be a u.l.f.\ metric space. Let $(n_k)_k$ be a countable (possibly finite) collection of natural numbers, and assume that $X$ contains a sequence $(X_k)_k$ of uniformly bounded disjoint subsets such that $|X_k|=n_k$ for each $k$.\footnote{Here and throughout the paper, we write $|A|$ for the cardinality of a set $A$.} Then it is straightforward to see that $\cstu(X)$ (and therefore $\cstql(X)$) contains $\prod_k\cB(\ell_2(X_k))\cong \prod_k \mathrm{M}_{n_k}(\C)$ as a normally embedded subalgebra. For such a collection $(X_k)_k$ to exist, $(n_k)_k$ must be bounded (as $X$ is u.l.f.), but under very minor assumptions on $X$, this is the only obstruction.

\begin{lemma}\label{cont lem}
Let $X$ be an infinite u.l.f.\ metric space. Assume moreover that~$X$ either has asymptotic dimension\footnote{See for example \cite[Chapter 9]{RoeBook} for background on asymptotic dimension. We note in particular that a countable group has asymptotic dimension zero if and only if it is an increasing union of finite groups: see \cite{Smith:2006aa}.} at least one, or that $X$ is a countable group with a left-invariant bounded geometry metric. 

Then for any bounded and countable collection of natural numbers $(n_k)_k$ there exists a sequence $(X_k)_k$ of uniformly bounded disjoint subsets of $X$ such that $|X_k|=n_k$ for all $k$. In particular, $\cstu(X)$ (and therefore $\cstql(X)$) contains a normally embedded copy of $\prod_k \mathrm{M}_{n_k}(\C)$.
\end{lemma}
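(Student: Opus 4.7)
The plan is to reduce both cases of the lemma to the unified task of producing an infinite sequence $(Y_k)_k$ of pairwise disjoint subsets of $X$, each of cardinality $N:=\sup_k n_k$ and of uniformly bounded diameter. Once such a family is built, choosing $X_k\subseteq Y_k$ with $|X_k|=n_k$ gives the desired subsets, and the map
\[
\prod_k \cB(\ell_2(X_k))\to \cstu(X),\qquad (a_k)_k\mapsto \SOTh\sum_k a_k,
\]
lands in $\cstu(X)$ because each such operator has propagation at most $\sup_k \diam(Y_k)<\infty$; this map is the inclusion of a von Neumann subalgebra of $\cB(\ell_2(X))$, hence a normal embedding. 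We may and will assume $N\geq 2$, since $N=1$ reduces to choosing infinitely many singletons, which is immediate because $X$ is infinite.

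The group case will be handled directly. I would pick $N$ distinct elements $g_1,\ldots,g_N\in X$ and set $D:=\max_{i,j} d(g_i,g_j)$. By left-invariance, every translate $hS$ of $S:=\{g_1,\ldots,g_N\}$ has cardinality $N$ and diameter~$\leq D$, and a routine induction produces a sequence $(h_k)_k\subseteq X$ with the translates $h_kS$ pairwise disjoint: at step $k+1$, one only has to avoid the finite set $\{h_jg_\ell g_i^{-1}\mid j\leq k,\ 1\leq i,\ell\leq N\}$, which is possible because $X$ is infinite.

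For the asymptotic-dimension case, the plan is to argue by contradiction: I will show that the non-existence of such a family $(Y_k)_k$ forces $\mathrm{asdim}(X)=0$. The first step uses uniform local finiteness: if $A_D:=\{x\in X\mid |B(x,D)|\geq N\}$ were infinite for some $D$, then a greedy selection of points in $A_D$ pairwise at distance $>2D$ (at each step one excludes a ball of radius $2D$, which has finitely many points) would yield disjoint balls $B(x_k,D)$ each containing $\geq N$ points, contradicting the assumption. So $A_D$ is finite for every $D>0$.

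I expect the main technical step to be the following connected-components argument. Fix $r>0$ and form the graph $G_r$ on $X$ whose edges are pairs of distinct points at distance $\leq r$. The key quantitative observation is that any $G_r$-connected subset $C\subseteq X$ with $|C|\geq N$ must be contained in $A_{(N-1)r}$: breadth-first search from any $x\in C$ reaches at least $N$ vertices within graph-distance $N-1$, hence within metric distance $(N-1)r$. Consequently no $G_r$-component is infinite, and only finitely many have cardinality $\geq N$. Declaring each $G_r$-component a piece of a partition of $X$ then yields a uniformly bounded partition (small components have diameter $\leq (N-2)r$, while the finitely many large ones sit inside the finite set $A_{(N-1)r}$) whose distinct pieces are at distance $>r$ (two points at distance $\leq r$ share a component). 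Since $r>0$ was arbitrary, this contradicts $\mathrm{asdim}(X)\geq 1$ and finishes the proof. The delicate point will be the quantitative claim that a $G_r$-connected set of size $\geq N$ lies in $A_{(N-1)r}$, on which the rest of the argument rests.
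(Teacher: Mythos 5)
Your proof is correct. The group case is essentially the argument the paper gives: translate a fixed $N$-point set around and choose the translates inductively so that they are disjoint. (Your bookkeeping — avoiding the finite set $\{h_jg_\ell g_i^{-1}\}$ at each step — is in fact slightly more careful than the paper's, which picks the centers far apart and implicitly normalizes so that the translating point lies in the set being translated.) The real divergence is in the asymptotic-dimension case: the paper simply cites \cite[Lemma 2.4]{LiWillett2017}, whereas you reprove the combinatorial core from scratch. Your chain — (i) if the conclusion fails then $A_D=\{x\mid |B(x,D)|\ge N\}$ is finite for every $D$ (via the greedy $2D$-separated selection, using u.l.f.\ only to guarantee that finitely many chosen points exclude finitely many candidates); (ii) a $G_r$-connected set of size $\ge N$ lies in $A_{(N-1)r}$ by the breadth-first-search count, so every $r$-component is finite and all but finitely many have fewer than $N$ points; (iii) the partition into $r$-components is therefore uniformly bounded and $r$-disjoint for every $r$, forcing $\mathrm{asdim}(X)=0$ — is sound, and the quantitative step (ii) that you flag as delicate does hold: the BFS layers $L_0,\dots,L_{N-1}$ inside a connected set of size $\ge N$ are nonempty until the set is exhausted, so they contain at least $N$ points, all within metric distance $(N-1)r$ of the starting point. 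What your route buys is a self-contained proof that makes visible exactly which characterization of asymptotic dimension zero is being used (uniformly bounded $r$-disjoint partitions for all $r$); what it costs is length, since the cited lemma packages precisely this equivalence. The reduction to a single cardinality $N=\sup_k n_k$ and the verification that block-diagonal operators supported on the $X_k\times X_k$ have uniformly finite propagation, hence give a normal (SOT-sum-preserving) embedding of $\prod_k\cB(\ell_2(X_k))$ into $\cstu(X)$, both match the paper's discussion preceding the lemma.
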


\begin{proof}
If $X$ has asymptotic dimension at least one, the result follows easily from \cite[Lemma 2.4]{LiWillett2017}. If $X$ is a group, fix $N\in \N$ such that $N\geq n_k$ for all $k$. Choose a set $A$ in $X$ such that $|A|=N$, and assume that the diameter of $A$ is $s$ for some $s>0$. Choose a collection $(x_k)_k$ in $X$ such that $d(x_k,x_l)>2s$ for all $k\neq l$. Then the sets $Y_k:=x_kA$ are disjoint, and all have cardinality $N$ and diameter $s$. Hence we can find subsets $X_k\subseteq Y_k$ with the desired property.
\end{proof}

These observations naturally lead to the following refinement of Conjecture \ref{big con}.

\begin{conjecture}\label{bigger con}
Let $X$ be a u.l.f.\ metric space. The only von Neumann algebras that can embed into $\cstu(X)$ and $\cstql(X)$ are those of the form $\prod_k\mathrm{M}_{n_k}(\C)$, where $(n_k)_k$ is a countable (possibly finite) bounded collection of natural numbers. Moreover, $\prod_k\mathrm{M}_{n_k}(\C)$ embeds into $\cstu(X)$ or $\cstql(X)$ if and only if $X$ contains a disjoint collection $(X_k)_k$ of uniformly bounded subsets with $|X_k|=n_k$ for all $k$. 
\end{conjecture}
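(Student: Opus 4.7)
\begin{proofsketch}
The conjecture splits into two components: (I) that any embeddable von Neumann algebra has the form $\prod_k \mathrm{M}_{n_k}(\C)$ with $(n_k)_k$ bounded, and (II) the geometric characterization of precisely which such products embed. The easy direction of (II), that uniformly bounded disjoint $(X_k)_k$ with $|X_k|=n_k$ produce such an embedding, is immediate from the construction preceding Lemma \ref{cont lem}, and Corollary \ref{Cor.Main} reduces (I) to proving boundedness of $(n_k)_k$. So the genuinely open tasks are (a) proving boundedness of $(n_k)_k$ for any embedded $\prod_k \mathrm{M}_{n_k}(\C)$, and (b) extracting the disjoint subsets $(X_k)_k$ from any embedding in the bounded case.

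For (a), the plan is to remove both hypotheses in Corollary \ref{Cor.Main.2}. Suppose $\Phi : \prod_k \mathrm{M}_{n_k}(\C) \to \cstql(X)$ is an embedding with $n_k \to \infty$. By Theorem \ref{Thm.nVa.Equi-Approx.Unit.Ball}, the unit ball of $\Phi(\cM)$ is equi-quasi-local, so the projections $\Phi(e_{i,i}^k)$ for $i \leq n_k$ are pairwise orthogonal and $\eps$-$r$-quasi-local with $r$ depending only on $\eps$. The u.l.f.\ hypothesis on $X$ sharply constrains how many mutually orthogonal projections with such uniform quasi-locality can concentrate on any finite window; I would attempt a pigeonhole/counting argument to conclude that the family $\{\Phi(e_{i,i}^k) : k \in \N,\ i \leq n_k\}$ is asymptotically a ghost in the sense of \S\ref{SectionProdMatrix}. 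The strengthening of Theorem \ref{ThmBlockMatrix} indicated immediately after its statement then forces $\Phi(1_{\cM})$ itself to be a ghost projection $p$. The final step is to show that $p\, \cstql(X)\, p$ cannot contain the non-exact algebra $\prod_k \mathrm{M}_{n_k}(\C)$: since $\chi_A p \chi_A$ is compact for every finite $A\subseteq X$, I would build a sequence of completely positive maps, locally supported on finite windows, that approximates the identity on $\Phi(\cM)$, contradicting non-exactness.

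For (b), given bounded $(n_k)_k$ and an embedding $\Phi$ into $\cstu(X)$, Theorem \ref{Thm.nVa.Equi-Approx.Unit.Ball} yields equi-approximability of the images $\Phi(e_{i,j}^k)$, and then I would apply the vector-measure machinery of \S\ref{SectionProdMatrix} (Lemma \ref{LemmaCorollaryMeasureURA} together with the atomic Lyapunov-type reasoning of \cite{BaudierBragaFarahKhukhroVignatiWillett2021uRaRig}). For each $k$ the measure $A \mapsto \chi_A \Phi(1_{\mathrm{M}_{n_k}(\C)}) \chi_A$ on finite subsets of $X$ should produce a subset $X_k \subseteq X$ with $|X_k|=n_k$ and diameter bounded independently of $k$, on which $\Phi(\mathrm{M}_{n_k}(\C))$ is approximately $\mathcal{B}(\ell_2(X_k))$; an orthogonality and separation argument, possibly after passing to a subsequence, then makes the $(X_k)_k$ pairwise disjoint and uniformly bounded. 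The analogous argument for $\cstql(X)$ uses the equi-quasi-local variant in place of equi-approximability.

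The main obstacle is unambiguously the last step of (a): ruling out embeddings of non-exact von Neumann algebras into the corner determined by a noncompact ghost projection. The spaces where such projections exist are precisely the exotic geometries (e.g.\ those containing expanders) in which $\cstql(X)$ and $\cstu(X)$ themselves fail to be exact, so the \cstar-algebraic obstruction that powered Corollary \ref{a case} is unavailable. Novel techniques -- perhaps a Voiculescu-style asymptotic-embedding obstruction, or a finer structure theory exhibiting noncompact ghost projections as ``almost commutative'' corners -- seem necessary to close this gap.
\end{proofsketch}
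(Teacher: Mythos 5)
You are attempting to prove Conjecture \ref{bigger con}, which the paper explicitly leaves open: its strongest results in this direction are Corollaries \ref{a case} and \ref{Cor.Main.2}, and the authors state that they do not know whether the geometric hypothesis on $X$ or the hypothesis on the embedding in Corollary \ref{Cor.Main.2} can be weakened or removed. Your proposal is therefore a research program rather than a proof, and to your credit you largely say so; however, one step that you present as already available is false as stated. You write that the strengthening of Theorem \ref{ThmBlockMatrix} indicated after its statement (i.e.\ Theorem \ref{ThmBlockMatrixWeakStar2}) ``forces $\Phi(1_{\cM})$ itself to be a ghost projection.'' It does not. That theorem shows only that the family $(\Phi(e^k_{i,i}))$ is \emph{asymptotically} a ghost, which the paper explicitly describes as a technical \emph{weakening} of the assertion that $\Phi(1_{\cM})$ is a ghost; the implication ``asymptotic ghost $\Rightarrow$ the sum is a ghost'' is valid only under the additional hypothesis that each individual projection is itself a ghost (Remark \ref{RemarkAsympGhosts}, part \eqref{gag}), and part \eqref{bad ag} of the same remark constructs an asymptotic ghost family for which no cofinite subfamily sums to a ghost. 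So without the hypothesis of Corollary \ref{Cor.Main.2} that minimal projections are sent to compact (or at least ghost) operators, your chain of reasoning in (a) breaks at precisely the point the paper identifies as open, and the final obstacle you name (excluding $\prod_k\mathrm M_{n_k}(\C)$ from the corner of a noncompact ghost projection) is never legitimately reached, because you have not produced such a corner.

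Part (b) has a similar status. The paper notes that the geometric characterization would require a careful study of spaces of asymptotic dimension zero (witness $X=\{n^2\mid n\in\N\}$, where $\cstql(X)=\ell_\infty(X)+\cK(\ell_2(X))$ and $\prod_k\mathrm M_2(\C)$ does not embed even though $(n_k)_k$ is bounded) and explicitly declines to attempt it. Your vector-measure sketch presumes that each $\Phi(1_{\mathrm M_{n_k}(\C)})$ concentrates on roughly $n_k$ points of uniformly bounded diameter, but nothing in Lemma \ref{LemmaCorollaryMeasureURA} delivers this: the minimal projections may have infinite rank, and the lemma only produces individual points $x$ with $\|p_{f(x)}\delta_x\|\geq\delta$, not supports of the right cardinality or diameter. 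In summary, your proposal correctly reassembles the paper's partial results (Corollary \ref{Cor.Main}, the exactness argument of Corollary \ref{a case}, and the easy direction of the second assertion via the construction preceding Lemma \ref{cont lem}), but neither of the two genuinely open implications is established, and the one intermediate claim you assert as known contradicts Remark \ref{RemarkAsympGhosts}.
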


Note that it is not true that $\prod_k \mathrm{M}_{n_k}(\C)$ embeds into $\cstql(X)$ for any infinite u.l.f.\ metric space $X$ and bounded sequence $(n_k)_k$: the simplest counterexample is probably $X=\{n^2\mid n\in \N\}$ with the metric inherited from~$\R$. We have $\cstql(X)=\cstu(X)=\ell_\infty(X)+\cK(\ell_2(X))$, from which it follows (for example) that $\prod_{k\in \N} \mathrm M_2(\C)$ cannot embed. Notice however that containing copies of  $\prod_{k\in \N}\mathrm M_n(\C)$, for all $n\in\N$, does not characterize  being of asymptotic zero dimension: for example, if $X=\{(i^2,j^2)\mid (i,j)\in\mathbb N^2\}$ is of asymptotic zero dimension, and $\cstu(X)$ contains copies of $\prod_{k\in \N} \mathrm{M}_n(\C)$ for all   $n\in\N$. Progress on the more refined part of the conjecture above would need a careful study of spaces of asymptotic dimension zero; we do not attempt that in this paper, but the question seems interesting.

\section{Diffuse abelian von Neumann algebras inside quasi-local algebras}\label{SecLInfty}
 
The main goal of this section is to prove Theorem \ref{ThmLInftyQL}, i.e.\ that $\cstql(X)$ cannot contain an embedded diffuse abelian von Neumann algebra, under the stronger hypothesis of the embedding being also normal. We will then show in \S\ref{SectionMakingWeakSCont} that the existence of a $*$-homomorphic embedding implies the existence of a normal one (see Proposition \ref{P.SOT}), which will then give us Theorem \ref{ThmLInftyQL} in full generality.

We now state the main result of this section. %For that, recall that a von Neumann algebra is called \emph{diffuse} if it has no nonzero minimal projections.

\begin{theorem}\label{ThmLInftyWeakStar}
If $X$ is a u.l.f.\ metric space and $\cM$ is a diffuse abelian von Neumann algebra, then there is no normal embedding of $\cM$ into $\cstql(X)$.
\end{theorem}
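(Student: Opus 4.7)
The plan is to argue by contradiction. Suppose a normal embedding $\Phi\colon\cM\hookrightarrow\cstql(X)$ exists. By the equivalence of normality with $\sigma$-weak continuity (Proposition~\ref{n cons}) combined with Kaplansky density, the image $\cN:=\Phi(\cM)$ has WOT-closed unit ball and is therefore a diffuse abelian von Neumann subalgebra of $\cB(\ell_2(X))$ contained in $\cstql(X)$. (Replacing $\cstql(X)$ by the corner cut by $\Phi(1_\cM)$ if necessary, I may assume the embedding is unital.) Since $X$ is u.l.f.\ and therefore countable (a countable union of finite balls around a basepoint), $\ell_2(X)$ is separable. The goal, following \S\ref{Subsection.Methods}, is to manufacture an embedding of the Higson corona, identified with $\cZ(\cstql(X)/\cK(\ell_2(X)))$ via results from \cite{SpakulaZhang2020JFA}, into the commutant $\cN'$, and then contradict the $\sigma$-finiteness of $\cN'$.

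The key preliminary step is to show $\cN'\cap\cK(\ell_2(X))=0$. If $K$ were a nonzero element of this intersection, spectral theory applied to $K^*K$ inside the von Neumann algebra $\cN'$ would produce a nonzero finite-rank projection $E\in\cN'$. The compression $m\mapsto mE$ would then be a $*$-homomorphism from $\cN$ into the finite-dimensional abelian algebra $\cB(\ran E)$; writing its kernel as $\cN(1-e)$ for a projection $e\in\cN$ exhibits the diffuse von Neumann algebra $\cN e$ as faithfully representable on the finite-dimensional space $\ran E$. This is impossible unless $e=0$, but $e=0$ forces the compression to vanish and hence $E=1_\cN\cdot E=0$, a contradiction.

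With $\cN'\cap\cK=0$ in hand, I would invoke Johnson--Parrott's theorem \cite{JohPar}: since $\cN$ is abelian and hence type I, every $T\in\cB(\ell_2(X))$ with $[T,\cN]\subseteq\cK(\ell_2(X))$ lies in $\cN'+\cK(\ell_2(X))$. Passing to the Calkin algebra $Q:=\cB(\ell_2(X))/\cK(\ell_2(X))$, this translates to $\pi(\cN)'\cap Q=\pi(\cN'+\cK)=\pi(\cN')$. Because the center $\cZ(\cstql(X)/\cK)$ commutes with $\pi(\cN)\subseteq\pi(\cstql(X))$ inside $Q$, we obtain the inclusion $\cZ(\cstql(X)/\cK)\subseteq\pi(\cN')$. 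Inverting the injective restriction $\pi|_{\cN'}$ (which is injective by the first step) produces a $*$-embedding of the Higson corona into $\cN'$.

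The contradiction now follows: by Lemma~\ref{L.HC}, the Higson corona contains $2^{\aleph_0}$ pairwise orthogonal nonzero positive contractions. Their images in $\cN'$ remain pairwise orthogonal and nonzero, and their support projections, which lie in the von Neumann algebra $\cN'$ by Borel functional calculus, form an uncountable orthogonal family of nonzero projections, contradicting the $\sigma$-finiteness of $\cN'$. The main technical obstacles I anticipate are verifying the precise form of Johnson--Parrott needed here, namely $\pi^{-1}(\pi(\cN)')=\cN'+\cK$ for abelian $\cN$ on a separable Hilbert space, and the identification of $\cZ(\cstql(X)/\cK)$ with the Higson corona; both are classical results that the paper cites as preliminaries, so no new machinery should be required.
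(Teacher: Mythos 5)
Your proposal is correct and follows essentially the same route as the paper: Johnson--Parrott to identify $\pi[\cN]'$ with $\pi[\cN']$, the identification of $\cZ(\cstql(X)/\cK(\ell_2(X)))$ with the Higson corona via Proposition~\ref{PropHigCorCenterURQ}, Lemma~\ref{L.HC} to produce $2^{\aleph_0}$ orthogonal positive elements, and the resulting contradiction with the separable representability (equivalently, $\sigma$-finiteness) of the commutant. The only local difference is your argument that $\cN'\cap\cK(\ell_2(X))=0$, which compresses $\cN$ by a finite-rank projection and uses that $\sigma$-weakly closed ideals of an abelian von Neumann algebra are corners --- a clean alternative to the paper's Proposition~\ref{PropDifAbelianVNA}, which goes through minimal-rank projections and central covers (modulo the small slip that $\cB(\ran E)$ itself is not abelian; it is the image of $\cN$ inside it that is a finite-dimensional abelian algebra).
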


Recall that if $\cM$ is a von Neumann algebra (or \cstar-algebra) in $\cB(H)$, then~$\cM'$ denotes its commutant. Recall also that if $p\in \cM$ is a projection, then its \emph{central cover} (also called the \emph{central carrier} or \emph{central support}) is the projection 
\begin{equation}\label{cen cov}
z_p:=\bigwedge\{q\in \cZ(\cM) \mid p\leq q\},
\end{equation}
i.e.\ the smallest projection in the center of $\cM$ that dominates $p$: see for example \cite[III.1.1.5]{Black:Operator} or \cite[page 223]{Tak:TheoryI} for more details.

The following result seems likely to be known to experts.
 
\begin{proposition}\label{PropDifAbelianVNA}
Let $H$ be a Hilbert space and let $\cM\subseteq \cB(H)$ be a diffuse abelian von Neumann algebra. Then $\cM'\cap \cK(H)=\{0\}$. 
\end{proposition}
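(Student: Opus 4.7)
The plan is to argue by contradiction: I would assume there is a nonzero $T \in \cM' \cap \cK(H)$ and, after replacing $T$ by $T^*T$, assume that $T$ is positive. Compactness forces some $\lambda > 0$ to be an isolated point of the spectrum of $T$ with finite-dimensional eigenspace, so continuous functional calculus applied to $T$ produces the spectral projection $p$ onto this eigenspace; this $p$ is a nonzero finite-rank projection lying in the $\cst$-algebra generated by $T$, and hence in $\cM'$. The goal is then to use $p$ to manufacture a minimal projection of $\cM$, which will contradict diffuseness.

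To that end, I would form the compression $\pi \colon \cM \to \cB(pH)$, $\pi(a) = ap|_{pH}$, which is a unital $*$-homomorphism because $p \in \cM'$. Its image $\pi(\cM)$ is a unital abelian $*$-subalgebra of the finite-dimensional algebra $\cB(pH)$, hence is $*$-isomorphic to $\C^k$ for some $k \ge 1$. I then pick a minimal nonzero projection $q$ in $\pi(\cM)$ and view $q$ as a projection on $H$, extended by zero on $(pH)^\perp$, so that $pq = qp = q$.

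The key step is to derive a common-eigenspace property for $\cM$ on $qH$. Since $\pi(\cM)$ is abelian and $q$ is minimal in it, we must have $\pi(a)q = \lambda_a q$ for every $a \in \cM$ and some $\lambda_a \in \C$. Combined with $aq = apq = \pi(a)q$, this gives $aq = \lambda_a q$ for every $a \in \cM$. Picking any unit vector $\xi \in qH$, the vector state $\omega_\xi(a) := \langle a\xi, \xi \rangle$ equals $\lambda_a$, and multiplicativity $\lambda_{ab} = \lambda_a \lambda_b$ follows from $(ab)q = a(\lambda_b q) = \lambda_a\lambda_b q$. Thus $\omega_\xi$ is a \emph{character} of $\cM$, and it is normal because vector states always are.

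To finish, I would invoke the standard correspondence between normal characters on an abelian von Neumann algebra and its minimal projections: the kernel of $\omega_\xi$ is a weakly closed ideal of $\cM$, hence of the form $\cM(1-e)$ for some projection $e\in\cM$ which is nonzero because $\omega_\xi(1)=1$; then $\omega_\xi$ induces a $*$-isomorphism $\cM e \cong \C$, exhibiting $e$ as a minimal projection of $\cM$ and contradicting diffuseness. The only delicate step is the passage from minimality of $q$ in the compression $\pi(\cM)$ to the common-eigenspace property on $qH$; everything else is routine spectral theory and ideal theory for abelian von Neumann algebras.
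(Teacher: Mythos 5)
Your argument is correct, and it is genuinely different from the one in the paper. The paper fixes a finite-rank projection $p\in\cM'$ of \emph{minimal} rank, passes to its central cover $z_p$ in $\cM'$, and uses Murray--von Neumann equivalence to show that $z_p\cM'$ is a type I factor with finite-dimensional commutant, so that the nonzero central corner $z_p\cM=(z_p\cM')'$ is finite-dimensional, contradicting diffuseness. You instead compress $\cM$ to the finite-dimensional subspace $pH$ (legitimate since $p\in\cM'$), observe that the image is a finite-dimensional abelian $\cst$-algebra $\cong\C^k$, pull a minimal projection $q$ of the image back to a common eigenspace $qH$ for $\cM$, and extract a normal character $\omega_\xi$, whose kernel is a $\sigma$-weakly closed ideal $(1-e)\cM$ with $e$ a minimal projection of $\cM$ --- again contradicting diffuseness. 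All the steps check out: $p$ lies in $\cM'$ by functional calculus, $\pi(a)q=\lambda_a q$ does follow from minimality of $q$ in $\C^k$, the identity $aq=apq=\pi(a)q$ is valid because $p$ commutes with $\cM$, and vector states are indeed normal, so the ideal-theoretic conclusion is sound. Your route avoids comparison theory of projections and central covers entirely, making it more elementary; the paper's route yields the slightly stronger structural conclusion that a whole central corner of $\cM$ would have to be finite-dimensional, but that extra information is not needed for the statement.
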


\begin{proof}
Suppose towards a contradiction that $\cM'\cap \cK(H)\neq \{0\}$. Hence $\cM'\cap \cK(H)$ contains a non-zero self-adjoint operator, and so it contains a non-zero finite rank projection by the spectral theorem for self-adjoint compact operators. Fix a finite rank projection $p\in\cM'$ with minimum rank, i.e., $\mathrm{rank}(p)\leq \mathrm{rank}(q)$ for all projections $q\in \cM'$. Let $z_p\in \cZ(\cM')$ be the central cover of $p$, and note that $z_p\in (\cM')'=\cM$.

Let $\sim$ and $\lesssim$ denote the Murray--von Neumann equivalence and subequivalence relations on projections in $\cM'$. Using Zorn's lemma, fix a maximal subset $Q$ of $\cM'$ consisting of mutually orthogonal projections such that $q\lesssim p$ for all $q\in Q$. Since $p$ was chosen to have minimum rank among projections in $\cM'$, it follows that $p\sim q$ for all $q\in Q$. By \cite[III.1.1.12]{Black:Operator}, we have that 
\[
z_p=\SOTh\sum_{q\in Q}q.
\]
For each pair $q,r\in Q$, as $q\sim r$ we may choose partial isometries $v_{qr}$ in~$\cM'$ such that $v_{qr}=v_{rq}^*$, $q=v_{qr}v_{rq}$, and $r=v_{rq}v_{qr}$; moreover, as $q,r\leq z_p$ we have that $v_{qr}\in z_p\cM'$ for all $q,r\in Q$. From now on, consider $z_p\cM'$ as acting on $z_pH$. Following the discussion in \cite[III.1.5.1]{Black:Operator}, we see that the von Neumann algebra $\cN$ generated by $\{v_{qr}\mid q,r\in Q\}$ is isomorphic to $\cB(\ell_2(Q))$, with commutant isomorphic to $q\cB(z_p)q$ for any $q\in Q$. Writing $N$ for the rank of each $q\in Q$, we see that $\cN'\cong \mathrm{M}_N(\C)$ for some $N\in \N$.

As $\cN\subseteq z_p \cM'\subseteq \cB(z_pH)$, the previous paragraph implies that $(z_p\cM')'$ embeds into $ \mathrm{M}_{N}(\C)$. A direct check (or compare \cite[Lemma 4.1.6]{MurphyBook}) shows then that 
\[
\mathrm{M}_N(\C)\supseteq (z_p\cM')'=z_p\cM''=z_p\cM
\]
and so $z_p\cM$ is finite dimensional. Since $\cM$ is diffuse, this gives us a contradiction.
\end{proof}

For the next result, we will need to make use of some corona algebras. We start by recalling the definition of the Higson corona of a u.l.f.\ metric space. See for example \cite[\S2.3]{RoeBook} for more background. 

\begin{definition}\label{Defi.Higson.Corona}
Let $(X,d)$ be a u.l.f.\ metric space. 

\begin{enumerate}
\item A bounded function $f\colon X\to \C$ is called a \emph{Higson function}\footnote{Higson functions are also referred to as \emph{slowly oscillating} functions.} if for all $\eps,r>0$ there is a bounded subset $Z\subseteq X$ such that $|f(x)-f(y)|\leq \eps$ for all $x,y\in X\setminus Z$ with $d(x,y)\leq r$. 
\item The $\cst$-algebra of all Higson functions is denoted by $\mathrm C_h(X)$ and the \emph{Higson corona}\footnote{In topology, ``Higson corona'' would usually refer to the maximal ideal space of this \cstar-algebra; here we refer to the \cstar-algebra itself.} of $X$ is defined as $\mathrm C_Q(X)=\mathrm C_h(X)/\mathrm C_0(X)$.%, where $C_0(X)$ is the $\cst$-algebra of all functions $f\colon X\to \C$ which approach zero as $x\to \infty$.
\end{enumerate}
We use the multiplication action of $\mathrm C_h(X)$ on $\ell_2(X)$ to identify $\mathrm C_h(X)$ with a \cstar-subalgebra of $\cB(\ell_2(X))$. We moreover use the identification $\mathrm C_Q(X)=\mathrm C_h(X)/(\mathrm C_h(X)\cap \cK(\ell_2(X))$ to identify $\mathrm C_Q(X)$ with a \cstar-subalgebra of the Calkin algebra $\cB(\ell_2(X))/\mathcal K(\ell_2(X))$.
\end{definition}

\begin{definition}\label{Defi.Corona.Alg}
Let $X$ be a metric space. \begin{enumerate}
\item The \emph{quasi-local corona of $X$} is $\cstql(X)=\cstql(X)/\cK(\ell_2(X))$. 
\item The \emph{uniform Roe corona of $X$} is $\mathrm{Q}^*_u(X)=\cstu(X)/\cK(\ell_2(X))$.\footnote{The uniform Roe corona of a u.l.f.\ metric space was first studied in \cite{BragaFarahVignati2018AdvMath}. We do not need it for our main results here, but include some discussion for completeness.}
\end{enumerate}
\end{definition}

The next theorem is a straightforward corollary of work of Johnson and Parrott \cite{JohPar}. It will be essential in the proofs of both Proposition \ref{PropHigCorCenterURQ} and Theorem~\ref{ThmLInftyWeakStar} below. For the statement of the next few results, let $\pi\colon\mathcal B(H)\to\mathcal B(H)/\mathcal K(H)$ denote the canonical quotient map, and recall that a \emph{masa} in a \cstar-algebra is a maximal abelian self-adjoint subalgebra. 

\begin{theorem}[Johnson and Parrott]\label{ThmJohsonParrott}
Let $H$ be a Hilbert space and $\cM\subseteq \cB(H)$ be an abelian von Neumann algebra. Then, $(\pi[\cM])'=\pi[\cM']$. In particular, if $\cM$ is a masa of $\cB(H)$, then $\pi[\cM]$ is a masa of $\cB(H)/\cK(H)$. 
\end{theorem}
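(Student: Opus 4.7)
The plan is to derive this from the main theorem of \cite{JohPar}, which asserts that every bounded derivation from an abelian von Neumann subalgebra $\cM \subseteq \cB(H)$ into the ideal $\cK(H)$ is inner. The trivial inclusion $\pi[\cM'] \subseteq (\pi[\cM])'$ is immediate: if $S \in \cM'$ then $[S,a] = 0$ for all $a \in \cM$, so $\pi(S)$ commutes with every element of $\pi[\cM]$.

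For the non-trivial inclusion, I would fix $T \in \cB(H)$ with $\pi(T) \in (\pi[\cM])'$, which by definition means $[T,a] \in \cK(H)$ for every $a \in \cM$. The assignment
\[
\delta_T \colon \cM \to \cK(H), \qquad \delta_T(a) := [T,a]
\]
is then a well-defined bounded linear map; the Leibniz identity $\delta_T(ab) = \delta_T(a)b + a \delta_T(b)$ is a routine calculation, so $\delta_T$ is a bounded derivation from $\cM$ into $\cK(H)$. Applying the Johnson--Parrott theorem produces $K \in \cK(H)$ such that $\delta_T(a) = [K,a]$ for every $a \in \cM$. Setting $S := T - K$, one has $[S,a] = 0$ for every $a \in \cM$, i.e.\ $S \in \cM'$, and $\pi(T) = \pi(S) \in \pi[\cM']$, proving the reverse inclusion.

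For the ``In particular'' clause, when $\cM$ is a masa of $\cB(H)$ we have $\cM' = \cM$, so the identity already established gives
\[
(\pi[\cM])' = \pi[\cM'] = \pi[\cM].
\]
Since $\pi[\cM]$ is abelian and equals its own commutant in the Calkin algebra, it is a masa of $\cB(H)/\cK(H)$.

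The only non-routine ingredient is the Johnson--Parrott derivation theorem itself; the argument above is simply a translation of that statement into the language of commutants modulo the compacts, so the main ``obstacle'' is really just correctly invoking the cited black box. Once the derivation $\delta_T$ is identified, the rest of the proof is a standard algebraic manipulation.
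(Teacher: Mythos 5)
Your proposal is correct and takes essentially the same route as the paper: both reduce the statement to the Johnson--Parrott theorem that bounded derivations from an abelian von Neumann algebra into $\cK(H)$ are implemented by compact operators (their property $P_2$, \cite[Theorem 2.1]{JohPar}). The only difference is that you spell out the implication from that derivation result to the commutant statement via $\delta_T$, whereas the paper simply cites \cite[Lemma 1.4]{JohPar} (the implication $P_2\Rightarrow P_1$) for exactly that step.
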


\begin{proof}
We just explain how to derive the result from the results of \cite[Theorem~2.1]{JohPar}. Clearly $(\pi[\cM])'\supseteq\pi[\cM']$, so we have to show that if $b\in \cB(H)$ is such that $ab-ba\in \cK(H)$ for all $a\in \cM$, then $b\in \cM+\cK(H)$. In the terminology of \cite{JohPar}, \cite[Theorem~2.1]{JohPar} says that $\cM$ has \emph{property $P_2$} (this property is not important to us), which by \cite[Lemma~1.4]{JohPar} implies that $\cM$ has \emph{property $P_1$}. As defined on \cite[page 39]{JohPar}, property $P_1$ is exactly the desired conclusion.
\end{proof}

The following result might be of interest in its own right. Parts of it are known already (see for example \cite[Theorem 3.3]{SpakulaZhang2020JFA}), but the complete statement seems to be new.

\begin{proposition}\label{PropHigCorCenterURQ}
Let $X$ be a u.l.f.\ metric space. Then 
\[
\mathrm C_Q(X)=\cZ(\mathrm Q^*_{ql}(X))=\cZ(\mathrm Q^*_u(X)).
\]
\end{proposition}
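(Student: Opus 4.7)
The plan is to establish the triple equality by showing the four inclusions $C_Q(X) \subseteq \cZ(\mathrm Q^*_u(X))$, $C_Q(X) \subseteq \cZ(\mathrm Q^*_{ql}(X))$, and their converses.

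For the two ``$C_Q(X)$-into-centers'' inclusions, first note that $C_h(X) \subseteq \ell_\infty(X) \subseteq \cstu(X) \subseteq \cstql(X)$, and that $C_h(X) \cap \cK(\ell_2(X)) = \mathrm{C}_0(X)$, so $C_Q(X)$ embeds canonically as a \cstar-subalgebra of both coronas. To establish centrality, it suffices to verify that $[f,a] \in \cK(\ell_2(X))$ for every $f \in C_h(X)$ and every $a \in \cstql(X)$---this also covers $a \in \cstu(X)$. This commutator estimate is essentially \cite[Theorem 3.3]{SpakulaZhang2020JFA}. A direct proof proceeds by a two-scale paving argument: given $\eps > 0$, first pick $r$ from quasi-locality so that $\|\chi_A a \chi_B\| \leq \eps$ whenever $d(A,B) > r$, then pick a bounded set $Z \subseteq X$ from the Higson property of $f$ so that $|f(x) - f(y)| \leq \eps$ whenever $d(x,y) \leq r$ and $x,y \notin Z$. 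Since $X$ is u.l.f., the $r$-enlargement $Z_r := \{x \in X : d(x,Z) \leq r\}$ is finite, so $\chi_{Z_r}$ is a finite-rank projection; modulo this finite-rank correction, a partition of $X \setminus Z_r$ into blocks of diameter $\leq r$ splits the commutator into a near-diagonal part whose matrix entries are bounded by $\eps \|a\|$ (hence with operator norm controlled by a Schur-type estimate using u.l.f.) and a far-diagonal part whose norm is controlled directly by the quasi-local estimate.

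For the converse inclusions, let $z \in \cstql(X)$ (respectively, $z \in \cstu(X)$) be such that $\pi(z)$ is central in the appropriate corona. Then $[z,g] \in \cK(\ell_2(X))$ for every $g \in \ell_\infty(X) \subseteq \cstu(X) \subseteq \cstql(X)$, so Theorem~\ref{ThmJohsonParrott} (Johnson--Parrott), applied to the masa $\ell_\infty(X) \subseteq \cB(\ell_2(X))$, yields $z \in \ell_\infty(X) + \cK(\ell_2(X))$. Write $z = f + k$ with $f \in \ell_\infty(X)$ and $k \in \cK(\ell_2(X))$; it remains to show $f \in C_h(X)$. Assuming otherwise, one may choose $\eps, r > 0$ and a sequence of disjoint pairs $(x_n, y_n)_n$ escaping every bounded set with $d(x_n, y_n) \leq r$ and $|f(x_n) - f(y_n)| > \eps$ for every $n$. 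Then the operator $v := \sum_n v_n$, where $v_n$ is the rank-one partial isometry sending $\delta_{x_n}$ to $\delta_{y_n}$ and vanishing on the orthogonal complement of $\bbC \delta_{x_n}$, has propagation at most $r$, so lies in $\cstu(X) \subseteq \cstql(X)$; however $[f,v]\delta_{x_n} = (f(y_n) - f(x_n))\delta_{y_n}$ has norm $> \eps$ on the orthonormal sequence $(\delta_{y_n})_n$ going to infinity, so $[f,v] \notin \cK(\ell_2(X))$, contradicting the centrality of $\pi(z)$. Hence $f \in C_h(X)$ and $\pi(z) \in C_Q(X)$.

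The main obstacle is the commutator estimate in the first step for $a \in \cstql(X)$ that is genuinely quasi-local and not a norm-limit of finite-propagation operators: without property A one cannot first approximate $a$ and then commute, so the paving argument must simultaneously balance the quasi-local scale $r$ against the Higson bounded set $Z$ in a single estimate. The second step, by contrast, is essentially a bookkeeping exercise once Johnson--Parrott is available and one recognises that the failure of the Higson condition is witnessed by a concrete finite-propagation partial isometry.
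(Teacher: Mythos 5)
Your proof is correct and follows essentially the same route as the paper: the inclusion $\mathrm C_Q(X)\subseteq\cZ$ via \cite[Theorem 3.3]{SpakulaZhang2020JFA}, Johnson--Parrott to place a central element in $\ell_\infty(X)+\cK(\ell_2(X))$, and the finite-propagation partial isometry built from pairs witnessing the failure of the Higson condition to finish. The only (immaterial) differences are that the paper chains $\mathrm C_Q(X)\subseteq\cZ(\mathrm Q^*_{ql}(X))\subseteq\cZ(\mathrm Q^*_u(X))\subseteq\mathrm C_Q(X)$ rather than treating the two coronas separately, and that your supplementary ``direct paving'' sketch of the commutator estimate is the one loose spot---but you correctly defer to the cited theorem for that step, exactly as the paper does.
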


\begin{proof} By Theorem~\ref{ThmJohsonParrott}, $\ell_\infty(X)/C_0(X)$ is a masa in $\cB(\ell_2(X))/\mathcal K(\ell_2(X))$. Since it is included in each of $\mathrm Q^*_u(X)$ and $\mathrm Q^*_{ql}(X)$, it is a masa in each of these algebras. It follows that 
$$
\cZ(\mathrm Q^*_{u}(X))=\{a\in \ell_\infty(X)/C_0(X)\mid [a,b]=0~\forall b\in \mathrm Q^*_{u}(X)\}
$$
and similarly for $\cZ(\mathrm Q^*_{ql}(X))$, whence $\cZ(\mathrm Q^*_{ql}(X))\subseteq \cZ(\mathrm Q^*_{u}(X))$.   For all $f\in \mathrm C_h(X)$ and all $a\in \cstql(X)$, \cite[Theorem 3.3]{SpakulaZhang2020JFA} implies $[f,a]\in \cK(\ell_2(X))$.  Hence $\mathrm C_Q(X)\subseteq \cZ(\mathrm Q^*_{ql}(X))\subseteq \cZ(\mathrm Q^*_{u}(X))$, and it suffices to show that $\cZ(\mathrm Q^*_{u}(X))\subseteq \mathrm C_Q(X)$.

Let $a\in \cZ(\mathrm Q^*_u(X))$. As $a\in \ell_\infty(X)/\mathrm C_0(X)=\pi[\ell_\infty(X)]$, there is $b\in \ell_\infty(X)$ such that $\pi(b)=a$.
Let us show that $b\in \mathrm C_h(X)$. We proceed by contradiction. If this is not the case, we can choose $\eps>0$, $r>0$, and sequences of distinct elements $(x_n)_n$ and $(z_n)_n$ in $X$ such that for all $n\in\N$ we have
\[
d(x_n,z_n)\leq r\quad\text{and}\quad|b(x_n)-b(z_n)|\geq \eps.
\]
Let $v\in\cB(\ell_2(X))$ be the partial isometry such that $v\delta_{x_n}=\delta_{z_n}$ for all $n\in\N$ and $v\delta_x=0$ for all $x\not\in \{x_n\mid n\in\N\}$. Since $d(x_n,z_n)\leq r$ for all $n\in\N$, $v$ has finite propagation. By definition of $v$, it also follows that
\[
|\langle(vb- bv)\delta_{x_n},\delta_{z_n}\rangle|=|b(x_n)-b(z_n)|\geq \eps
\]
for all $n\in\N$. As $(\delta_{x_n})$ converges weakly to zero, $vb-bv$ is not compact which implies that 
\[
\pi(v)a-a\pi(v)=\pi(vb-bv)\neq 0.
\]
As $v\in \cstu(X)$, this show that $a$ cannot be in the center of $\mathrm Q^*_u(X)$, which is the desired contradiction.
\end{proof}

Recall that a metric space is \emph{locally finite} if all of its bounded subsets are finite. Variants of the construction in the following lemma are well-known: compare for example the proof of \cite[Theorem 3]{Keesling:1994cr}.

\begin{lemma} \label{L.HC} 
Let $X$ be a locally finite metric space and $p\in \cstql(X)$ be an infinite rank projection. Then there is an orthogonal family of $2^{\aleph_0}$ non-zero positive contractions in $\pi(p)\mathrm C_Q(X)\pi(p)$.
\end{lemma}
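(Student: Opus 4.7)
The plan is to use the infinite rank of $p$ and local finiteness of $X$ to construct, recursively, an orthonormal family $(\xi_n)_n \subseteq p\ell_2(X)$ and pairwise disjoint finite sets $(C_n)_n \subseteq X$ with $d(C_n, C_m) \geq 10(r_n + r_m)$ for $n \neq m$ (where $(r_n)_n$ is fixed in advance with $r_n \nearrow \infty$) and $\|\xi_n - \chi_{C_n}\xi_n\| < \varepsilon_n$ for some $\varepsilon_n \to 0$. The key observation at step $n$ is that for any finite $F \subseteq X$ the operator $p\chi_F p$ has rank at most $|F|$ on $p\ell_2(X)$, so its kernel inside the infinite-dimensional subspace $p\ell_2(X) \cap \{\xi_1, \dots, \xi_{n-1}\}^\perp$ remains infinite-dimensional; any unit vector $\xi_n$ from this kernel is exactly supported off $F$. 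Applying this with $F := \bigcup_{j<n} \{x \in X : d(x, C_j) \leq 10(r_n + r_j)\}$ (finite by local finiteness) and truncating $\xi_n$ in $\ell_2$-norm to a finite subset $C_n \subseteq X \setminus F$ produces the desired data.

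To each $C_n$ I would attach the Lipschitz bump $f_n(x) := \max\{0, 1 - d(x, C_n)/r_n\}$, which is $1/r_n$-Lipschitz (hence Higson), equals $1$ on $C_n$, and is supported in the $r_n$-neighborhood of $C_n$. The separation $d(C_n, C_m) \geq 10(r_n + r_m)$ makes these supports pairwise disjoint at arbitrarily large scales; a straightforward check then shows that for every $A \subseteq \N$ the sum $f_A := \sum_{n \in A} f_n$ is a positive Higson contraction, since its oscillation at scale $r$ outside a bounded set is at most $r/r_N$ for a suitable $N$. Fixing an almost disjoint family $(A_\alpha)_{\alpha < 2^{\aleph_0}}$ of infinite subsets of $\N$, for distinct $\alpha, \beta$ the function $f_{A_\alpha} f_{A_\beta} = \sum_{n \in A_\alpha \cap A_\beta} f_n^2$ is finitely supported on the locally finite space $X$ and hence lies in $\mathrm C_0(X)$, so the $\pi(f_{A_\alpha})$ already form an orthogonal family of $2^{\aleph_0}$ positive contractions in $\mathrm C_Q(X)$.

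It remains to move this family into the corner $\pi(p)\mathrm C_Q(X)\pi(p)$. For non-nullity of $\pi(p f_{A_\alpha} p)$, note that $f_{A_\alpha} - 1$ vanishes on $C_n$ for $n \in A_\alpha$, so $\|p f_{A_\alpha} p \xi_n - \xi_n\| = \|p(f_{A_\alpha} - 1)(\xi_n - \chi_{C_n}\xi_n)\| \leq \varepsilon_n$; thus $p f_{A_\alpha} p$ acts approximately as the identity on the infinite orthonormal family $\{\xi_n : n \in A_\alpha\}$ and cannot be compact. For orthogonality $\pi(p f_{A_\alpha} p f_{A_\beta} p) = 0$ when $\alpha \neq \beta$, I would split $f_{A_\alpha} = g_\alpha + h_\alpha$ and $f_{A_\beta} = g_\beta + h_\beta$, with the $g$'s collecting indices $n \leq N$ and the $h$'s indices $n > N$. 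For $N$ large enough that $A_\alpha \cap A_\beta \subseteq \{1, \dots, N\}$ and that the supports of $h_\alpha$ and $h_\beta$ (at distance at least $18 r_N$) exceed the quasi-local threshold of $p$ at a given scale $\varepsilon$, the terms containing $g_\alpha$ or $g_\beta$ are finite rank and hence compact, while $\|h_\alpha p h_\beta\| \leq \varepsilon$ by quasi-locality of $p \in \cstql(X)$. Hence $p f_{A_\alpha} p f_{A_\beta} p$ sits within $\varepsilon$ of a compact operator for every $\varepsilon > 0$ and is therefore compact.

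The main technical obstacle is coordinating the recursive construction of the $\xi_n$ and $C_n$: each $\xi_n$ must simultaneously lie in $p\ell_2(X)$, be orthogonal to the previous $\xi_j$'s, be well-approximated by its restriction to a finite $C_n$, and the $C_n$'s must be spaced widely enough that the bumps $f_n$ combine into Higson functions across any subset of indices. All three requirements reduce to the finite-rank nature of $p\chi_F p$ for finite $F$, which works precisely because $p$ has infinite rank and $X$ is locally finite; the remainder of the argument is the expected interplay between almost disjoint families and quasi-locality of $p$.
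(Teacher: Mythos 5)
Your proposal is correct and follows essentially the same route as the paper's proof: a sliding-hump construction of an orthonormal family $(\xi_n)$ in $p\ell_2(X)$ concentrated on widely separated finite sets, Lipschitz bump functions giving Higson functions indexed by an almost disjoint family, non-compactness of $pf_Ap$ via its action on the $\xi_n$, and orthogonality modulo compacts via quasi-locality of $p$. The only (harmless) variations are that you produce each $\xi_n$ exactly supported off the forbidden set by taking it in the kernel of $p\chi_Fp$ rather than approximately, and you handle orthogonality by an $\varepsilon$-tail-splitting instead of passing to a subsequence with summable estimates $\|\chi_{\supp g_n}p\chi_{\supp g_m}\|\leq 2^{-n-m}$.
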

 
\begin{proof}
Let $\xi_1$ be a unit vector in the image of $p$, and choose a finite subset $A_1$ of $X$ such that $\|\chi_{A_1}\xi_1-\xi_1\|\leq 2^{-1-2}$. As $\chi_{A_1}$ has finite rank and $p$ has infinite rank, there exists a unit vector $\xi_2$ that is orthogonal to $\xi_1$, that is in the image of $p$, and is such that $\|\chi_{A_1}\xi_2\|\leq 2^{-2-3}$. Hence $\|\chi_{X\setminus A_1}\xi_2-\xi_2\|\leq 2^{-2-3}$. Choose a finite subset $A_2$ of $X\setminus A_1$ such that $\|\chi_{A_2}\xi_2-\xi_2\|\leq 2^{-2-2}$. Continuing this way\footnote{This is sometimes called a \emph{sliding hump} argument: see for example \cite[page 14]{AlbiacKalton2006}.} we construct an orthonormal sequence $(\xi_n)$ in the image of $p$ and a disjoint sequence $(A_n)_n$ of finite subsets of $X$ such that
\begin{equation}
\label{EqLHQ1}
\|\xi_n-\chi_{A_n}\xi_n\|\leq 2^{-n-2}
\end{equation}
for all $n\in\N$. Passing to a subsequence (and using local finiteness of $X$), we may moreover assume that $d(A_n,A_m)>3n$ for all $n\neq m$. 

Define now $g_n:X\to [0,1]$ by 
\[
g_n(x)=\max\{0,1-d(x,A_n)/n\}.
\]
In words, each $g_n$ is identically one on $A_n$, identically zero outside the $n$-neighbourhood of $A_n$, and decreases `with slope $1/n$' in between; in particular, it is $(1/n)$-Lipschitz. As $d(A_n,A_m)>3n$ for all $n\neq m$, we therefore have that the supports of $g_n$ and $g_m$ are at least $n$ apart for all $n\neq m$. As $p$ is in $\cstql(X)$ we may thus pass to another subsequence and assume that 
%Fix $x_0\in X$ and for each $n$ choose a differentiable function $f_n\colon [0,\infty)\to [0,1]$ with the following properties
%\begin{enumerate}
%\item $f_n(d(x_0,x))=1$ for all $n\in\N$ and all $x\in A_n$,
%\item each $\mathrm{supp}(f_n)$ is compact and $\lim_n \min( \mathrm{supp}(f_n))=\infty$, and 
%\item $\lim_n\sup_t |f '_n(t)|=0$.
%\end{enumerate}
%For each $n\in\N$, define $g_n\colon X\to [0,1]$ by letting \[g_n(x)=f_n(d(x_0,x))\] for all $x\in X$. Going to a subsequence if necessary, we can assume that $(\mathrm{supp}(g_n))_n$ are disjoint and that $d(\mathrm{supp}(g_n),\mathrm{supp}(g_m))\geq \min\{n,m\}$ for all $n,m\in\N$. Moreover, going to a further subsequence, we can use that $p$ approximately has finite propagation to assume that 
\begin{equation}\label{EqLHQ2}
\|\chi_{\supp(g_n)}p\chi_{\supp(g_m)}\|\leq 2^{-n-m}
\end{equation}
for all $n\neq m$ in $\N$.

For each $M\subseteq \N$, let $g_M\colon X\to [0,1]$ be defined by 
\[
g_M(x):=\sum_{n \in M} g_n(x)
\]
(notice that given $x\in X$, this sum has at most one nonzero term, so the sum converges pointwise). In other words, looking at each $g_n$ as an element in $\ell_\infty(X)$, we have $g_M=\SOTh\sum_{n\in M}g_n$ for all $M\subseteq \N$. As each $g_n$ is $(1/n)$-Lipschitz, each $g_M$ is a Higson function. For each $M\subseteq \N$, let $h_M=\pi(g_M)$, and let $q=\pi(p)$. 

A family $S$ of infinite subsets of $\N$ is \emph{almost disjoint} if $M_1\cap M_2$ is finite for all distinct $M_1,M_2\in S$. Fix an almost disjoint family of infinite subsets of $\N$ of cardinality $2^{\aleph_0}$(see for example \cite[Proposition~9.2.2]{Fa:STCstar} for a proof that such a collection exists). We claim that the collection $(qh_Mq)_{M\in S}$ is the desired family of orthogonal non-zero positive contractions. Clearly, each $qh_Mq$ is a positive contraction. For orthogonality, since the family $S$ is almost disjoint, we only need to consider disjoint $M_1,M_2\subseteq \N$. In this case, by \eqref{EqLHQ2}, we have, for $n\in M_1$ and all $m\in M_2$, that 
\[
\|pg_npg_mp\|\leq \|g_npg_m\|\leq 2^{-n-m}.
\]
Therefore, 
\[
pg_{M_1}pg_{M_2}p=\sum_{n\in M_1}\sum_{m\in M_2}pg_npg_mp
\]
converges in norm and, as each $pg_npg_mp$ is compact, $pg_{M_1}pg_{M_2}p$ is compact. 

At last, using \eqref{EqLHQ1}, we have that 
\begin{align*}
\|pg_Mp\xi_n\|&=\|pg_M\xi_n\|\geq \|p g_n\xi_n\|-\sum_{m\neq n}\|pg_m\xi_n\|\\
&\geq \|p \xi_n\|-\|p(1-g_n) \xi_n\|-\sum_{m\neq n}\|pg_m\chi_{X\setminus A_n}\xi_n\| \geq \tfrac{1}{2}
\end{align*}
for all $M\subseteq \N$ and all $n\in\N$. Hence, $pg_Mp$ is noncompact for all infinite $M\subseteq \N$. This finishes the proof.
\end{proof}

\begin{proof}[Proof of Theorem~\ref{ThmLInftyWeakStar}] 
Fix a u.l.f.\ metric space $X$. We need to show that if $\cM$ is a diffuse abelian von Neumann algebra then there is no normal embedding of $\cM$ into $\cstql(X)$. Suppose for contradiction there is such an embedding, and abuse notation slightly by writing $\cM$ for its image. 
%Then the image $\cM$ of $L_\infty(Z,\mu)$ is a von Neuman algebra on the separable Hilbert space $1_{\cM}\ell_2(X)$, and therefore is isomorphic to $L_\infty[0,1]$ by \cite[Theorem III.1.22]{Tak:TheoryI}; we identify 
%Since every $L_\infty$ space associated with a diffuse measure space contains an ultraweakly closed copy of $L_\infty[0,1]$, it suffices to prove that there is no normal embedding $\Phi\colon L_\infty[0,1]\to \cstql(X)$. 
Let $q:=\pi(1_{\cM})$. Since $\Phi$ is normal, $\cM$ is a von Neumann subalgebra of $\mathcal B(1_\cM \ell_2(X))$ by \cite[Proposition III.3.12]{Tak:TheoryI}. Furthermore, $\cM$ is diffuse and abelian, hence the relative commutant $\cM'\cap\mathcal B(1_\cM \ell_2(X))$ contains no compact operators by Proposition~\ref{PropDifAbelianVNA}, and so $\pi$ restricts to an isomorphism between $\cM'\cap\mathcal B(1_\cM \ell_2(X))=1_{\cM}\cM'1_{\cM}$ and $q\pi(\cM')q$. Theorem \ref{ThmJohsonParrott} (applied to the von Neumann algebra $\cM$ on the Hilbert space $1_{\cM}\ell_2(X)$) implies that 
\[
q\pi[\mathcal M']q=q\pi[\cM]'q.
\]
Furthermore, since $\pi[\cM]\subseteq\mathrm Q^*_{ql}(X)$, and by Proposition~\ref{PropHigCorCenterURQ} the center of $\mathrm Q^*_{ql}(X)$ is the Higson corona $\mathrm C_Q(X)$, we have
\[
q\mathrm C_Q(X)q\subseteq q\pi[\cM]'q.
\]
As $\cM$ is diffuse, $1_{\cM}$ has infinite rank, so Lemma \ref{L.HC} applied to $p=1_\cM$ gives that $q\pi[\cM]'q$ contains an orthogonal family of $2^{\aleph_0}$ nonzero positive elements. This gives us a contradiction since $q\pi[\cM]'q$ is isomorphic to $\cM'\cap \cB(1_\cM \ell_2(X))$, a separably represented $\cst$-algebra. 
\end{proof}

\section{Equi-quasi-locality and equi-approximability}\label{SectionEqui}

We need quantitative ways of saying that certain subsets of $\cB(\ell_2(X))$ belong to $\cstql(X)$ in a uniform way, and sufficient conditions for this to hold. The present section investigates this problem: Lemma~\ref{Lemma.equi.0} from the introduction is the main technical result, which we will use to prove Theorem~\ref{Thm.nVa.Equi-Approx.Unit.Ball} at the end of this section. Although for our results regarding the embeddability of von Neumann algebras, it would be enough to obtain such results only for quasi-local algebras, we also provide versions of these results for uniform Roe algebras for the sake of completeness. 

For the definitions of $\eps$-$r$-quasi-local, $\eps$-$r$-approximable, equi-quasi-local, and equi-approximable, we refer the reader to Definition \ref{Defi.Equi.Sets}. 

\iffalse
We start with our ``equi-definitions'':

\begin{definition} 
Let $X$ be a metric space.
\begin{enumerate}
\item Given $\eps,r>0$, we say that $a\in \cB(\ell_2(X))$ is \emph{$\eps$-$r$-quasi-local} if for all $A,B\subseteq X$, with $d(A,B)>r$, we have that $\|\chi_Aa\chi_B\|\leq \eps$.
\item Given $\eps,r>0$, we say that $a\in \cB(\ell_2(X))$ is \emph{$\eps$-$r$-approximable} if there is $b\in \cB(\ell_2(X))$, with $\propg(b)\leq r$, such that $\|a-b\|\leq \eps$.
\item A subset $S\subseteq \cB(\ell_2(X))$ is \emph{equi-quasi-local} if for all $\eps>0$ there is $r>0$ such that every contraction $a\in S$ is $\eps$-$r$-quasi-local. $S$ is \emph{equi-approximable} if for all $\eps>0$ there is $r>0$ such that every contraction $a\in S$ is $\eps$-$r$-approximable.
\end{enumerate}
\end{definition}
\fi

The following lemma follows straightforwardly from the fact that compact subsets of metric spaces are totally bounded (cf. \cite[Lemma 4.8]{BragaFarah2018Trans}). 

\begin{lemma}\label{LemmaCompEqui}
Let $X$ be a metric space. Then any norm-compact subset of $\cstql(X)$ is equi-quasi-local, and any norm-compact subset of $\cstu(X)$ is equi-approximable. 
\end{lemma}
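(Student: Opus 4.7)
The plan is to exploit total boundedness of norm-compact sets in order to reduce each statement to a finite collection of instances, each of which is handled directly by the relevant definition (quasi-locality in the first case; norm-approximability by finite propagation operators in the second case).

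First I would handle the quasi-locality statement. Let $K \subseteq \cstql(X)$ be norm-compact and fix $\eps > 0$. Since $K$ is totally bounded, there exist finitely many $a_1, \ldots, a_n \in K$ such that $K$ is covered by the open balls of radius $\eps/2$ around $a_1, \ldots, a_n$. Since each $a_i$ lies in $\cstql(X)$, Definition~\ref{Defi.Quasi.Loc} supplies an $r_i > 0$ such that $a_i$ is $(\eps/2)$-$r_i$-quasi-local. Set $r := \max_{i \leq n} r_i$. Then for any contraction $a \in K$ we may pick $i$ with $\|a - a_i\| \leq \eps/2$, and for any $A,B \subseteq X$ with $d(A,B) > r$ the triangle inequality gives
\[
\|\chi_A a \chi_B\| \leq \|\chi_A a_i \chi_B\| + \|\chi_A(a - a_i)\chi_B\| \leq \tfrac{\eps}{2} + \tfrac{\eps}{2} = \eps,
\]
using that $\chi_A$ and $\chi_B$ are contractions. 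Hence every contraction in $K$ is $\eps$-$r$-quasi-local, which is exactly the equi-quasi-locality of $K$.

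The equi-approximability statement is proved in the same way. Given norm-compact $K \subseteq \cstu(X)$ and $\eps > 0$, cover $K$ by finitely many $\eps/2$-balls centered at $a_1,\ldots,a_n \in K$. By definition of $\cstu(X)$, for each $i$ there exists $b_i \in \cB(\ell_2(X))$ with $\propg(b_i) \leq r_i$ for some $r_i$ and $\|a_i - b_i\| \leq \eps/2$. Put $r := \max_i r_i$. Then for any $a \in K$, choosing $i$ with $\|a - a_i\| \leq \eps/2$ yields $\|a - b_i\| \leq \eps$ with $\propg(b_i) \leq r$, so every element of $K$ (not just contractions) is $\eps$-$r$-approximable.

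There is no real obstacle here; the only point deserving attention is to start the triangle inequality with a slack factor ($\eps/2$ rather than $\eps$) in both the covering step and the quantitative quasi-locality/approximability of each $a_i$, so that the two errors combine to at most $\eps$. The argument is a standard ``finite $\eps$-net plus triangle inequality'' and is essentially the same as \cite[Lemma~4.8]{BragaFarah2018Trans} cited in the statement.
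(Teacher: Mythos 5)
Your argument is correct and is essentially identical to the paper's proof (the paper picks a finite $\eps$-dense subset, uses finiteness to get a uniform $r$, and concludes every element of $K$ is $2\eps$-$r$-quasi-local; you simply run the same net-plus-triangle-inequality argument with $\eps/2$ in place of $\eps$). No issues.
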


\begin{proof} 
The case of $\cstu(X)$ is \cite[Lemma 4.8]{BragaFarah2018Trans} and the quasi-local version follows analogously. For completeness, here is a proof: if $K\subseteq \cstql(X)$ is compact and $\eps>0$, pick a finite $A\subseteq K$ which is $\eps$-dense in $K$. As $A$ is finite, there is $r>0$ such that every element in $A$ is $\eps$-$r$-quasi-local. This implies every element in $K$ is $2\eps$-$r$-quasi-local, so we are done.
\end{proof} 

\begin{lemma} \label{L.WOT-closed} 
Let $X$ be a metric space and for each pair $\eps ,r>0$ let 
\[
\mathrm{QL}(\eps,r):=\{a\in \cB(\ell_2(X)) \mid a\text{ is }\eps\text{-}r\text{-quasi-local and }\|a\|\leq 1\}
\] 
and
\[
\mathrm{A} (\eps,r):=\{a\in \cB(\ell_2(X)) \mid a\text{ is }\eps\text{-}r\text{-approximable and }\|a\|\leq 1\}.
\] 
Both $\mathrm{QL}(\eps,r)$ and $\mathrm{A}(\varepsilon,r)$ are WOT-closed. 
\end{lemma}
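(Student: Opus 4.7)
The plan is to handle both sets by exploiting two standard facts about the weak operator topology: (i) left and right multiplication by a fixed bounded operator is WOT-continuous, and (ii) the operator norm $\|\cdot\|$ is WOT-lower semicontinuous (since $\|b\|=\sup\{|\langle b\xi,\eta\rangle|\mid \|\xi\|,\|\eta\|\leq 1\}$ is a supremum of WOT-continuous functions). In particular, the closed unit ball of $\cB(\ell_2(X))$ is WOT-closed.

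For $\mathrm{QL}(\eps,r)$, I would take a net $(a_i)_i$ in $\mathrm{QL}(\eps,r)$ converging WOT to some $a\in\cB(\ell_2(X))$. By the lower semicontinuity of the norm, $\|a\|\leq 1$. Given $A,B\subseteq X$ with $d(A,B)>r$, continuity of the map $b\mapsto \chi_A b\chi_B$ in WOT gives $\chi_A a_i\chi_B\to \chi_A a\chi_B$ in WOT, and applying lower semicontinuity of the norm once more together with $\|\chi_A a_i\chi_B\|\leq\eps$ yields $\|\chi_A a\chi_B\|\leq \eps$. Hence $a\in \mathrm{QL}(\eps,r)$.

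For $\mathrm{A}(\eps,r)$, suppose $(a_i)_i$ in $\mathrm{A}(\eps,r)$ converges WOT to $a$. For each $i$ fix $b_i\in\cB(\ell_2(X))$ with $\propg(b_i)\leq r$ and $\|a_i-b_i\|\leq\eps$; in particular $\|b_i\|\leq 1+\eps$. Since the closed ball of radius $1+\eps$ in $\cB(\ell_2(X))$ is WOT-compact by Banach--Alaoglu, I can pass to a subnet along which $b_i\to b$ in WOT for some $b$ of norm at most $1+\eps$. Pairing against $\delta_x,\delta_z$ with $d(x,z)>r$ gives $\langle b\delta_x,\delta_z\rangle=\lim_i\langle b_i\delta_x,\delta_z\rangle=0$, so $\propg(b)\leq r$. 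Then $a_i-b_i\to a-b$ in WOT along this subnet, and lower semicontinuity of the norm gives $\|a-b\|\leq\eps$. Combined with $\|a\|\leq 1$, this shows $a\in\mathrm{A}(\eps,r)$.

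The only mildly delicate point is the $\mathrm{A}(\eps,r)$ case, where one must manufacture a single finite-propagation witness $b$ for $a$ out of the witnesses $b_i$; the Banach--Alaoglu compactness argument is what makes this work, and once one has $b$ as a WOT-cluster point of the $b_i$'s, both the propagation bound and the approximation bound transfer by the two basic WOT facts recalled at the start.
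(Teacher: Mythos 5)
Your proposal is correct and follows essentially the same route as the paper's proof: WOT-lower semicontinuity of the norm handles the quasi-local case, and for the approximable case one passes to a WOT-convergent subnet of the finite-propagation witnesses $b_i$ (via boundedness and WOT-precompactness) and checks that the limit retains propagation at most $r$. Your explicit verification of the propagation bound by pairing against $\delta_x,\delta_z$ is a slightly more detailed version of the step the paper leaves implicit.
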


\begin{proof} 

Let $(a_i)_{i\in I}$ be a net of contractions WOT-converging to some $a\in \cB(\ell_2(X))$. As WOT-closed bounded sets are norm closed, $\|a\|\leq 1$. Fix positive reals $\eps$ and $r$. 

Suppose that each $a_i$ is in $\mathrm{QL}(\eps,r)$. Fix $A,B\subseteq X$ with $d(A,B)>r$, so that $\|\chi_Aa_i\chi_B\|\leq \eps$ for all $i\in I$. As the norm is WOT-lower semicontinuous, it follows that $\|\chi_Aa\chi_B\|\leq \eps$. So, $a\in \mathrm{QL}(\eps,r)$.

Suppose now that each $a_i$ is in $\mathrm{A}(\eps,r)$. For each $i\in I$, let $b_i\in\cB(\ell_2(X))$ be such that $\propg(b_i)\leq r$ and $\|a_i-b_i\|\leq \eps$. As $(a_i)_{i\in I}$ is bounded, so is $(b_i)_{i\in I}$. As bounded subsets of $\cB(\ell_2(X))$ are WOT-precompact, by passing to a subnet we can assume that $b=\WOTh\lim_ib_i$ exists. Moreover, since each $b_i$ has propagation at most $r$, so does $b$. Finally, since $\|a_i-b_i\|\leq \eps$ for all $i\in I$, we have that $\|a-b\|\leq \eps$. This shows that $a\in \mathrm A(\eps,r)$. 
\end{proof}

We are now ready for the proof of Lemma \ref{Lemma.equi.0} from the introduction.

\begin{proof}[Proof of Lemma \ref{Lemma.equi.0}] Suppose that $X$ is a metric space and $\cM\subseteq \cB(\ell_2(X))$ is a WOT-closed $*$-subalgebra with unit $1_{\cM}$. Also suppose there is an increasing sequence $(p_n)_n$ of central projections in $ \cM$ such that each $p_n \cM p_n$ is finite-dimensional and $\SOTh\lim p_n=1_{\cM}$. Replacing $\cM$ with $\cM\oplus \C(1-1_\cM)$ and $p_n$ with $p_n\oplus (1-1_{\cM})$ if necessary, we can assume without loss of ge\-ne\-ra\-li\-ty that $1_{\cM}=1$. 

%Proof of Lemma \ref{Lemma.equi.0},
We now prove \eqref{Lemma.equi.Item1}: Suppose $\cM\subseteq \cstql(X)$. We need to prove that the unit ball $\cM_1$ of $\cM$ is equi-quasi-local. Towards a contradiction, assume the contrary. Then there is $\eps>0$ such that for all $n\in\N$ there is $a\in \cM_1$ which is not $\eps$-$n$-quasi-local. 

\begin{claim}\label{Claim.EquiTail}
For all $n,m\in\N$, there is $a\in (1-p_m)\cM$ of norm 1 which is not $\eps/2$-$n$-quasi-local. 
\end{claim}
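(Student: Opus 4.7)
The plan is to derive the claim by exploiting that $p_m\mathcal{M}$ is finite-dimensional and hence automatically well-behaved from the point of view of quasi-locality. Fix $n,m\in\mathbb{N}$. Since $p_m$ is central in $\mathcal{M}$, we have $p_m\mathcal{M} = p_m\mathcal{M}p_m$, so $p_m\mathcal{M}$ is finite-dimensional, and in particular its unit ball is norm-compact. By Lemma \ref{LemmaCompEqui} applied to this norm-compact subset of $\cstql(X)$, there is some $r_m>0$ such that every contraction in $p_m\mathcal{M}$ is $\varepsilon/2$-$r_m$-quasi-local.

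Next, set $N=\max\{n,r_m\}$. By the standing contradiction hypothesis, there exists $b\in\mathcal{M}_1$ which fails to be $\varepsilon$-$N$-quasi-local, so we may pick $A,B\subseteq X$ with $d(A,B)>N$ and $\|\chi_A b\chi_B\|>\varepsilon$. Decompose $b=p_m b+(1-p_m)b$; since $p_m$ is a central projection, both summands lie in $\mathcal{M}$, and $p_m b\in p_m\mathcal{M}$ is a contraction. Thus $p_m b$ is $\varepsilon/2$-$r_m$-quasi-local and a fortiori $\varepsilon/2$-$N$-quasi-local, so $\|\chi_A p_m b\chi_B\|\leq \varepsilon/2$. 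The triangle inequality then forces
\[
\|\chi_A(1-p_m)b\chi_B\|\geq \|\chi_A b\chi_B\| - \|\chi_A p_m b\chi_B\| > \varepsilon/2,
\]
so $(1-p_m)b$ is not $\varepsilon/2$-$N$-quasi-local, and in particular not $\varepsilon/2$-$n$-quasi-local.

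Finally, $(1-p_m)b$ is a nonzero element of $(1-p_m)\mathcal{M}$ with $\|(1-p_m)b\|\leq 1$, so setting $a:=(1-p_m)b/\|(1-p_m)b\|$ gives a norm-one element of $(1-p_m)\mathcal{M}$; dividing by a scalar in $(0,1]$ only scales $\|\chi_A a\chi_B\|$ upward, so $a$ remains not $\varepsilon/2$-$n$-quasi-local. This is the desired element. There is no real obstacle here: the argument is simply the splitting $1=p_m+(1-p_m)$ combined with the observation that the finite-dimensional piece is equi-quasi-local for free, which pushes all the failure of quasi-locality into the complementary corner $(1-p_m)\mathcal{M}$ where subsequent iterations of the argument can be carried out.
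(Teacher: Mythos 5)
Your proof is correct and follows essentially the same route as the paper's: both rest on the decomposition $b=p_mb+(1-p_m)b$ together with Lemma \ref{LemmaCompEqui} applied to the finite-dimensional (hence norm-compact unit ball) corner $p_m\cM p_m$. The only difference is presentational: the paper argues by contraposition (if every contraction in $(1-p_m)\cM$ were $\eps/2$-$n$-quasi-local, then all of $\cM_1$ would be $\eps$-$r$-quasi-local, contradicting the choice of $\eps$), whereas you run the same estimate forwards to exhibit the witness directly, with the harmless extra step of normalizing $(1-p_m)b$.
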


\begin{proof}
Suppose the conclusion of the claim fails for some pair $n,m\in\N$. So, all elements in the unit ball of $(1-p_m)\cM$ are $\eps/2$-$n$-quasi-local. Since $p_m\cM p_m$ is finite dimensional, there is $r>0$ such that every element in $(p_m\cM p_m)_1$ is $\eps/2$-$r$-quasi-local (Lemma~\ref{LemmaCompEqui}); without loss of generality, assume that $r>n$. Therefore, since every $a\in \cM_1$ can be written as $a=p_m a + (1-p_m)a$, this implies that every element in $\cM_1$ is $\eps$-$r$-quasi-local. This contradicts our choice of $\eps$. 
\end{proof}

\begin{claim}\label{Claim.2.6}
For each $r\in\N$, $\mathrm{QL}(\eps/4,r)\cap \cM_1$ has empty interior with respect to the restriction of the WOT-topology to $\cM_1$. 
\end{claim}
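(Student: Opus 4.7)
My plan is to prove Claim \ref{Claim.2.6} by contradiction, assuming that for some $r$ the set $\mathrm{QL}(\eps/4,r)\cap \cM_1$ has nonempty WOT-interior in $\cM_1$, and then manufacturing an element of that open set which fails $\eps/4$-$r$-quasi-locality. So I would fix $a_0 \in \cM_1$ together with a basic WOT-neighborhood
$$U = \{a \in \cM_1 : |\langle (a-a_0)\xi_i, \eta_i\rangle| < \delta, \ i \leq N\}$$
of $a_0$ contained in $\mathrm{QL}(\eps/4,r)$, and use Claim \ref{Claim.EquiTail} to produce a carefully chosen perturbation of $a_0$ lying in $U$ but badly non-quasi-local.

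The perturbation will have the form $a_0' := p_m a_0 + c$, where $m \in \N$ is chosen large and $c \in ((1-p_m)\cM)_1$ is supplied by Claim \ref{Claim.EquiTail}. Since the $p_n$'s are central in $\cM$, both $p_m a_0$ and $c$ respect the decomposition $\ell_2(X)= p_m \ell_2(X) \oplus (1-p_m) \ell_2(X)$, so $a_0'$ is block-diagonal there and $\|a_0'\| = \max(\|p_m a_0\|,\|c\|) \leq 1$. Moreover, centrality of $p_m$ gives $\langle(a_0' - a_0)\xi_i,\eta_i\rangle = \langle (c-a_0)(1-p_m)\xi_i,\eta_i\rangle$, so the SOT-convergence $p_n \to 1$ allows me to choose $m$ large enough (depending only on the $\xi_i, \eta_i, \delta, N$) that $a_0' \in U$ for \emph{every} $c \in ((1-p_m)\cM)_1$.

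Once such $m$ is fixed, I choose the scale $r_0 \geq r$ so large that the specific element $p_m a_0 \in \cM \subseteq \cstql(X)$ is itself $\eps/4$-$r_0$-quasi-local; this is possible by the very definition of $\cstql(X)$. I then apply Claim \ref{Claim.EquiTail} with parameters $(n,m) = (r_0,m)$ to obtain a norm-one $c \in (1-p_m)\cM$ that is not $\eps/2$-$r_0$-quasi-local, producing $A,B\subseteq X$ with $d(A,B)>r_0$ and $\|\chi_A c \chi_B\| > \eps/2$. The triangle inequality then gives
$$\|\chi_A a_0'\chi_B\| \geq \|\chi_A c\chi_B\| - \|\chi_A p_m a_0 \chi_B\| > \eps/2 - \eps/4 = \eps/4,$$
and since $d(A,B) > r_0 \geq r$, this shows $a_0'$ is not $\eps/4$-$r$-quasi-local, contradicting $a_0' \in U \subseteq \mathrm{QL}(\eps/4,r)$.

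The main obstacle I anticipate is verifying the norm bound $\|a_0'\| \leq 1$ cleanly, since the two summands in $p_m a_0 + c$ are not orthogonal as operators in the usual sense but only become block-diagonal relative to the projection $p_m$; this is precisely where centrality of the $p_n$ in $\cM$—rather than merely their membership in $\cM$—is essential, both to split $a_0$ as $p_m a_0 + (1-p_m) a_0$ and to absorb the $c$-piece into the $(1-p_m)$-block without interference. A secondary subtlety is that the scale $r_0$ at which $p_m a_0$ becomes $\eps/4$-quasi-local depends on $p_m a_0$ and may be much larger than $r$, but this causes no harm since Claim \ref{Claim.EquiTail} is available at every scale $n$.
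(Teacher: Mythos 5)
Your proposal is correct and follows essentially the same route as the paper's proof: a contradiction via the perturbation $p_m a_0 + c$ with $c$ supplied by Claim \ref{Claim.EquiTail}, using centrality of $p_m$ for the norm bound $\max(\|p_m a_0\|,\|c\|)\le 1$ and SOT-convergence of $(p_n)_n$ to keep the perturbation inside the basic WOT-neighborhood. The only cosmetic differences are that you bound $\|(1-p_m)\xi_i\|$ directly (uniformly in $c$) where the paper splits the inner product into two terms, and you exhibit the failure of $\eps/4$-$r$-quasi-locality of $a_0'$ directly rather than deriving that $c$ would have to be $\eps/2$-$n$-quasi-local.
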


\begin{proof}
Fix $r\in\N$ and suppose towards a contradiction that there is a WOT-open $U\subseteq \cB(\ell_2(X))$ such that $U\cap \cM_1$ is nonempty and $U\cap \cM_1\subset \mathrm{QL}(\eps/4,r)$. Fix $a\in U\cap \cM_1$. So there are $\delta>0$ and normalized $\xi_1,\ldots, \xi_k,\zeta_1,\ldots, \zeta_k\in \ell_2(X)$ for which the set
\[
B\coloneqq \bigcap_{i=1}^k\{b\in \cM_1\mid |\langle (a-b)\xi_i,\zeta_i\rangle|<\delta\} 
\]
is included in $\mathrm{QL}(\eps/4,r)$.
Since $\SOTh\lim p_n=1$, there is $m\in\N$ large enough such that 
\[
\|(1-p_m)\zeta_i\|<\frac{\delta}{2}\ \text{ and }\ \|(1-p_m)a\xi_i\|<\frac{\delta}{2}
\]
for all $i\leq k$. Let $n>r$ be such that $p_ma$ is $\eps/4$-$n$-quasi-local. By Claim \ref{Claim.EquiTail}, there is a contraction $b\in (1-p_m)\cM$ which is not $\eps/2$-$n$-quasi-local. Then, letting $c=p_ma+b$, we have that 
\[
\|c\|=\max \{\|p_ma\|,\|b\|\}\leq 1
\]
and that $a-c=(1-p_m)(a-b)$, hence 
\begin{multline*}
|\langle (a-c)\xi_i, \zeta_i\rangle|=|\langle (1-p_m)(a-b)\xi_i, \zeta_i\rangle|
\\
\leq |\langle (1-p_m)a\xi_i, \zeta_i\rangle|
+|\langle b\xi_i, (1-p_m)\zeta_i\rangle|<\delta,
\end{multline*}
 and $c\in B$. 
As $B\subseteq \mathrm{QL}(\eps/4,r)$, $c$ is $\eps/4$-$r$-quasi-local. As $p_ma$ is $\eps/4$-$n$-quasi-local, this implies that $b$ is $\eps/2$-$n$-quasi-local; contradiction.
\end{proof} 

We are now ready to complete the proof of case \eqref{Lemma.equi.Item1}. On bounded sets in~$\cM$, the WOT agrees with the ultraweak topology. Since the ultraweak topology coincides with the weak$^*$-topology associated with the predual of $\cM$, the Banach--Alaoglu theorem implies that the unit ball $\cM_1$ of $\cM$ is WOT-compact (and of course Hausdorff). By the previous claim and Lemma~\ref{L.WOT-closed}, we have that $\mathrm{QL}(\eps/4,r)\cap \cM_1$ is a closed subset with empty interior for all $r>0$. As $\cM\subseteq \cstql(X)$, we have that 
\[
\cM_1=\bigcup_{r\in\N}(\mathrm{QL}(\eps/4,r)\cap \cM_1)
\]
and that all sets on the right-hand side are closed and nowhere dense. 
This contradicts the Baire category theorem, completing the proof of \eqref{Lemma.equi.Item1}.

Proof of Lemma \ref{Lemma.equi.0}, \eqref{Lemma.equi.Item2}. We need to show that if $\cM\subseteq \cstu(X)$, then the unit ball of $\cM$ is equi-approximable. This proof follows the general strategy of the proof of \eqref{Lemma.equi.Item1}, with Claim~\ref{Claim.EquiTail} and Claim~\ref{Claim.2.6} replaced with the following two claims; the proofs are analogous to the proofs of the former claims and are left to the reader. 

\begin{claim}
For all $n,m\in\N$, there is $a\in (1_{\cM}-p_m)\cM$ of norm 1 which is not $\eps/2$-$n$-approximable. \qed
\end{claim}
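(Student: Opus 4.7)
The plan is to mimic the proof of Claim~\ref{Claim.EquiTail} verbatim, replacing ``quasi-local'' with ``approximable'' throughout; the argument is a clean splitting argument that uses centrality of $p_m$ and finite-dimensionality of $p_m\cM p_m$. I would argue by contradiction: suppose there exist $n,m\in\N$ such that every contraction in $(1_{\cM}-p_m)\cM$ is $\eps/2$-$n$-approximable. The goal is to contradict the standing assumption (set up at the start of the proof of Lemma~\ref{Lemma.equi.0}\eqref{Lemma.equi.Item2}) that for every $r\in\N$ there is some $a\in\cM_1$ which is not $\eps$-$r$-approximable.

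The key decomposition is as follows: for any $a\in\cM_1$, write $a = p_m a + (1-p_m)a$. Since $p_m$ is central in $\cM$ (so $1-p_m$ is central too, and $p_m(1-p_m)=0$), we have $p_m a = p_m a p_m$, so $p_m a$ lies in the unit ball of the finite-dimensional subalgebra $p_m\cM p_m$. By Lemma~\ref{LemmaCompEqui} applied inside $\cstu(X)$, the norm-compact set $(p_m\cM p_m)_1$ is equi-approximable, so there is some $r\geq n$ such that every element of $(p_m\cM p_m)_1$ is $\eps/2$-$r$-approximable. On the other hand, $(1-p_m)a$ is a contraction in $(1-p_m)\cM$, and hence $\eps/2$-$n$-approximable by the contradiction hypothesis, and \emph{a fortiori} $\eps/2$-$r$-approximable.

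Choose $b_1, b_2 \in \cB(\ell_2(X))$ with $\propg(b_i)\leq r$, $\|p_m a - b_1\|\leq \eps/2$, and $\|(1-p_m)a - b_2\|\leq \eps/2$. Then $b := b_1 + b_2$ has propagation at most $r$ and satisfies $\|a-b\|\leq \eps$, so $a$ is $\eps$-$r$-approximable. Since $a\in\cM_1$ was arbitrary, every contraction in $\cM$ is $\eps$-$r$-approximable, contradicting the choice of $\eps$ and finishing the proof.

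I do not expect a serious obstacle: the only point requiring a moment's attention is the identity $p_m a = p_m a p_m$, which is immediate from centrality of $p_m$ in $\cM$, and the applicability of Lemma~\ref{LemmaCompEqui} in the $\cstu(X)$ setting, which is exactly what that lemma provides. The rest is a routine triangle-inequality splitting, structurally identical to the quasi-local case.
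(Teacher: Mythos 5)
Your proof is correct and is essentially the paper's own argument (which the authors leave to the reader as the direct analogue of Claim~\ref{Claim.EquiTail}): the same decomposition $a=p_ma+(1-p_m)a$ using centrality of $p_m$, the same appeal to Lemma~\ref{LemmaCompEqui} for the finite-dimensional corner, and the same observation that summing two approximants of propagation at most $r$ yields an approximant of propagation at most $r$. No issues.
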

\iffalse
\begin{proof}
Suppose the lemma fails for $n,m\in\N$. So, the unit ball of $(1_{\cM}-p_m)\cM$ is $\eps/2$-$n$-approximable. Since $p_m\cM p_m$ is finite dimensional, there is $r>0$ such that every element in $(p_m\cM p_m)_1$ is $\eps/2$-$r$-approximable (Lemma \ref{LemmaCompEqui}); without loss of generality, assume that $r>n$. Therefore, since every $a\in \cM_1$ can be written as $a=p_m a + (1_{\cM}-p_m)a$, this implies that every element in $\cM_1$ is $\eps$-$r$-approximable. This contradicts our choice of $\eps$. 
\end{proof}
\fi

\begin{claim}
For each $r\in\N$, $\mathrm A(\eps/4,r)\cap \cM_1$ has empty interior with respect to the restriction of the WOT-topology to $\cM_1$. \qed
\end{claim}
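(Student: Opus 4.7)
The proof will mirror the structure of Claim~\ref{Claim.2.6}, with ``$\eps$-$r$-quasi-local'' systematically replaced by ``$\eps$-$r$-approximable''. The plan is to fix $r\in\N$, assume for contradiction that there is a WOT-open set $U\subseteq \cB(\ell_2(X))$ with $\emptyset \neq U\cap \cM_1 \subseteq \mathrm A(\eps/4,r)$, and reach a contradiction with the preceding claim (i.e., the approximability analog of Claim~\ref{Claim.EquiTail}), which produces a contraction $b\in (1-p_m)\cM$ that fails to be $\eps/2$-$n$-approximable for any prescribed $m,n$.

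First, I would fix $a\in U\cap \cM_1$ and expose a basic WOT-neighbourhood of $a$ of the form $B = \bigcap_{i=1}^k\{b\in \cM_1\mid |\langle(a-b)\xi_i,\zeta_i\rangle|<\delta\}\subseteq \mathrm A(\eps/4,r)$, for normalized vectors $\xi_i,\zeta_i$ and some $\delta>0$. Using $\SOTh\lim p_n = 1_\cM$, I would choose $m$ so large that $\|(1-p_m)\zeta_i\|<\delta/2$ and $\|(1-p_m)a\xi_i\|<\delta/2$ for all $i\le k$. Since $p_m\cM p_m$ is finite-dimensional, its unit ball is norm-compact and Lemma~\ref{LemmaCompEqui} supplies some $n>r$ such that $p_m a$ is $\eps/4$-$n$-approximable. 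The approximability version of Claim~\ref{Claim.EquiTail} then yields a contraction $b\in(1-p_m)\cM$ that is not $\eps/2$-$n$-approximable.

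The key step is to set $c = p_m a + b$, exploiting the fact that $p_m$ is \emph{central} in $\cM$: this gives the orthogonal decomposition $\|c\| = \max\{\|p_m a\|,\|b\|\}\le 1$ and the identity $a - c = (1-p_m)(a-b)$. Expanding $\langle(a-c)\xi_i,\zeta_i\rangle$, the first summand $\langle (1-p_m)a\xi_i,\zeta_i\rangle$ is bounded by $\|(1-p_m)a\xi_i\|<\delta/2$, while for the second summand centrality of $p_m$ forces $b = (1-p_m)b(1-p_m)$, so $\langle b\xi_i,\zeta_i\rangle = \langle b\xi_i,(1-p_m)\zeta_i\rangle$, which is bounded by $\|(1-p_m)\zeta_i\|<\delta/2$. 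Hence $c\in B\subseteq \mathrm A(\eps/4,r)$, so there exists $c'$ with $\propg(c')\le r$ and $\|c-c'\|\le \eps/4$. Picking $a'$ with $\propg(a')\le n$ and $\|p_m a - a'\|\le \eps/4$, the operator $b' := c' - a'$ satisfies $\propg(b')\le\max(r,n)=n$ and $\|b-b'\|\le \eps/2$, contradicting the choice of $b$.

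I do not anticipate serious obstacles: the only delicate points are the triangle-inequality accounting at the very end and the use of centrality of $p_m$ to assemble $c$ as a contraction and to pair the cross terms against $(1-p_m)\zeta_i$. As noted in the excerpt, the argument is strictly parallel to the proof of Claim~\ref{Claim.2.6}, so the remark in the paper that ``the proofs are analogous and left to the reader'' is justified; the main step worth writing down explicitly is the verification that $c\in B$, since that is where the choice of $m$ and the centrality of $p_m$ get combined.
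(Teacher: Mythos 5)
Your proof is correct and follows exactly the argument the paper intends (and carries out in detail for the quasi-local analogue, Claim~\ref{Claim.2.6}): the same basic WOT-neighbourhood $B$, the same choice of $m$ and $n$, the same perturbation $c=p_ma+b$ using centrality of $p_m$, and the same final triangle-inequality step showing $b'=c'-a'$ is an $\eps/2$-$n$-approximant of $b$. The verification that $c\in B$ and the propagation bound $\propg(c'-a')\le\max(r,n)$ are handled correctly, so nothing is missing.
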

\iffalse
\begin{proof}
Fix $r\in\N$ and suppose towards a contradiction that there is a WOT-open $U\subseteq \cB(\ell_2(X))$ such that $U\cap \cM_1$ is nonempty and $U\cap \cM_1\subset A(\eps/4,r)$. Fix $a\in U\cap \cM_1$. So there are $\delta>0$ and normalized $\xi_1,\ldots, \xi_k,\zeta_1,\ldots, \zeta_k\in \ell_2(X)$ such that 
\[
B\coloneqq \bigcap_{i=1}^k\{b\in \cM_1\mid |\langle (a-b)\xi_i,\zeta_i\rangle|<\delta\}\subset A(\eps/4,r).
\]
Since $\SOTh\lim p_n=1$, there is $m\in\N$ large enough such that 
\[
\|(1-p_m)\xi_i\|<\frac{\delta}{2}\ \text{ and }\ \|(a-p_ma)\xi_i\|<\frac{\delta}{2}
\] 
for all $i\in \{1,\ldots, k\}$. Let $n>r$ be such that $p_ma$ is $\eps/4$-$n$-approximable. By Claim \ref{Claim.EquiTail}, there is $b\in (1_{\cM}-p_m)\cM$ with norm 1 which is not $\eps/2$-$n$-approximable. Then, letting $c=p_ma+b$, we have that \[\|c\|=\max \{\|p_ma\|,\|b\|\}\leq 1\] which, by our choice of $m$, gives that $c\in B$. As $B\subseteq A(\eps/4,r)$, $c$ is $\eps/4$-$r$-approximable. As $p_ma$ is $\eps/4$-$n$-approximable, this implies that $b$ is $\eps/2$-$n$-approximable; contradiction.
\end{proof} 
\fi
As $\cM\subseteq \cstu(X)$, we have that 
\[
\cM_1=\bigcup_{r\in\N}(\mathrm A(\eps/4,r)\cap \cM_1).
\]
As in \eqref{Lemma.equi.Item1}, this contradicts the Baire category theorem.
\end{proof}

\begin{proposition}\label{Corollary.Lemma.equi} 
Let $X$ be a countable metric space and let $\cM\subseteq\cB(\ell_2(X))$ be a WOT-closed $*$-subalgebra isomorphic to a direct product of matrix algebras.
\begin{enumerate}
\item If $\cM\subseteq \cstql(X)$, then the unit ball of $\cM$ is equi-quasi-local.
\item If $\cM\subseteq \cstu(X)$, then the unit ball of $\cM$ is equi-approximable.
\end{enumerate}
\end{proposition}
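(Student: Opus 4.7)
The plan is to derive the proposition directly from Lemma~\ref{Lemma.equi.0}: it suffices to exhibit, inside $\cM$, an increasing sequence of central projections with finite-dimensional corners that SOT-converges to $1_\cM$.

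First I would fix an abstract $*$-isomorphism $\pi\colon\prod_k \mathrm{M}_{n_k}(\C)\to\cM$; the indexing set is automatically countable because $X$ is countable and $\cB(\ell_2(X))$ has separable predual. Let $e_k$ denote the minimal central projection of $\prod_k \mathrm{M}_{n_k}(\C)$ corresponding to the $k$-th factor, set $q_k:=\pi(e_k)\in\cM$, and let $p_n:=q_1+\cdots+q_n$. Then each $p_n$ is a central projection of $\cM$, and $p_n\cM p_n\cong\prod_{k\leq n}\mathrm{M}_{n_k}(\C)$ is finite-dimensional, so two of the three hypotheses of Lemma~\ref{Lemma.equi.0} are immediate.

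The main step -- and the expected obstacle -- is to verify that $p_n\to 1_\cM$ in the SOT of $\cB(\ell_2(X))$. Because $\cM$ is a WOT-closed unital $*$-subalgebra, it is a von Neumann algebra on $1_\cM\ell_2(X)$; hence both $\prod_k\mathrm{M}_{n_k}(\C)$ and $\cM$ are W$^*$-algebras and $\pi$ is a $*$-isomorphism between them. Such an isomorphism is automatically normal: a direct order-theoretic check (if $(a_i)_i$ is a bounded increasing net of positive elements with supremum $a$, then $\pi(a)$ is an upper bound for $(\pi(a_i))_i$, and conversely any upper bound $b$ for $(\pi(a_i))_i$ satisfies $\pi^{-1}(b)\geq a$, whence $b\geq \pi(a)$) establishes condition \eqref{1.normal} of Proposition~\ref{n cons}. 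Invoking the equivalent condition \eqref{6.normal} for the orthogonal family $(q_k)_k$ yields $\SOTh\sum_k q_k=1_\cM$ inside $\cB(\ell_2(X))$, i.e.\ $p_n\to 1_\cM$ in SOT. With all hypotheses of Lemma~\ref{Lemma.equi.0} verified, parts (1) and (2) of the proposition follow immediately from parts \eqref{Lemma.equi.Item1} and \eqref{Lemma.equi.Item2} of the lemma, respectively.
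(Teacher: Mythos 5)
Your proof is correct and follows essentially the same route as the paper: reduce to countably many factors via separability of $\ell_2(X)$, take the partial sums of the images of the minimal central projections, and apply Lemma~\ref{Lemma.equi.0}. The only difference is that you explicitly justify $\SOTh\lim p_n=1_{\cM}$ via automatic normality of $*$-isomorphisms between W$^*$-algebras, a detail the paper leaves implicit; your order-theoretic verification of that normality is sound.
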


\begin{proof} 
Since $\ell_2(X)$ is separable, $\cM$ is a direct product of at most countably many matrix algebras. It therefore contains an increasing sequence $(p_n)_n$ of central projections in $ \cM$ such that each $p_n \cM p_n$ is finite-dimensional and $\SOTh\lim p_n=1_{\cM}$. The result then follows from Lemma~\ref{Lemma.equi.0}.
\end{proof} 

Our next goal is Proposition~\ref{PropvNaStruc}, which characterizes $W^*$-algebras that do not contain a diffuse abelian $W^*$-subalgebra. This seems likely to be known to experts, but we could not find a proof in the literature so include one for completeness. It is the final ingredient needed to complete the proof of our main equi-approximability result (Theorem \ref{Thm.nVa.Equi-Approx.Unit.Ball}) from the introduction.

\begin{proposition}\label{PropvNaStruc}
Assume that $\cM$ is a \wstar-algebra such that there is no normal (possibly non-unital) embedding of a diffuse abelian von Neumann algebra into $\cM$. Then $\cM$ is isomorphic to $\prod_{i\in I}\mathrm M_{n_i}(\C)$ for some collection $(n_i)_{i\in I}$ of natural numbers. 
\end{proposition}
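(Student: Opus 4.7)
The plan is to apply the standard type decomposition of $W^*$-algebras and successively rule out every possibility incompatible with the hypothesis, in three reductions. First I would argue that the center $\cZ(\cM)$ must be atomic: if it contained a nonzero diffuse direct summand $\cD\subseteq \cZ(\cM)$, then $\cD$ itself would be a diffuse abelian $W^*$-subalgebra of $\cM$, and its inclusion is a normal $*$-homomorphism (inclusions of $W^*$-subalgebras are normal). Hence $\cZ(\cM)\cong \ell_\infty(I)$ for some set $I$; letting $(z_i)_{i\in I}$ be its minimal projections, the structure theory gives a $W^*$-decomposition $\cM\cong \prod_{i\in I} z_i\cM$ in which every $z_i\cM$ is a factor.

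Next I would show that each factor $z_i\cM$ is of type I. Suppose for contradiction that some $z_i\cM$ is of type II or type III, so that it contains no minimal projections. I claim that any masa $\cA\subseteq z_i\cM$ is diffuse: if $p\in \cA$ were a minimal projection, then $p\cA=\C p$ would be a masa of $p(z_i\cM)p$, forcing $p(z_i\cM)p=\C p$ and making $p$ minimal in $z_i\cM$, a contradiction. Since masas are $W^*$-subalgebras, the composition of the normal inclusions $\cA\hookrightarrow z_i\cM\hookrightarrow \cM$ (the second being a coordinate projection from $\prod_j z_j\cM$ into $\cM$, which is normal) exhibits a normal embedding of a diffuse abelian von Neumann algebra into $\cM$, contradicting the hypothesis. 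So every $z_i\cM$ is a type I factor, i.e.\ $z_i\cM\cong \cB(H_i)$ for some Hilbert space $H_i$.

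Finally, I would show that each $H_i$ must be finite dimensional. If some $H_i$ is infinite dimensional, pick a separable infinite dimensional closed subspace $H_0\subseteq H_i$, identify it with $L_2[0,1]$, and define $\psi\colon L_\infty[0,1]\to \cB(H_i)$ by $\psi(f)=m_f\oplus 0$ relative to the decomposition $H_i=H_0\oplus H_0^\perp$. The multiplication representation of $L_\infty[0,1]$ on $L_2[0,1]$ is a normal faithful embedding of $W^*$-algebras, and the corner embedding $\cB(H_0)\hookrightarrow \cB(H_i)$ is normal since it preserves bounded suprema; composing with $z_i\cM\hookrightarrow \cM$ yields a normal embedding of $L_\infty[0,1]$ into $\cM$, again a contradiction. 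Therefore $z_i\cM\cong \mathrm{M}_{n_i}(\C)$ for some $n_i\in\N$, and $\cM\cong \prod_{i\in I}\mathrm{M}_{n_i}(\C)$ as required. The only part demanding real care is bookkeeping the normality of the successive embeddings; in particular, one must verify that the non-unital corner map $a\mapsto a\oplus 0$ and the coordinate projection out of an $\ell_\infty$-product of $W^*$-algebras are both normal, each of which follows from direct verification using the supremum-preservation criterion \eqref{1.normal} of Proposition~\ref{n cons}.
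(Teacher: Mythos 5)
Your proof is correct, and it reaches the same two contradictions that drive the paper's argument: a masa in a von Neumann algebra without minimal projections is diffuse, and $\cB(H)$ for infinite-dimensional $H$ contains a normally (non-unitally) embedded copy of $L_\infty[0,1]$ realized as multiplication operators on a copy of $L_2[0,1]$. The difference is in how you organize the reduction. The paper first invokes the type decomposition $\cM=\cM_I\oplus\cM_{II}\oplus\cM_{III}$ together with the structure theorem for type I algebras (writing $\cM_I$ as a product of homogeneous summands $\cB(H_\aleph)\overline{\otimes}\cN_\aleph$), rules out infinite $\aleph$ and the type II/III parts, and only at the end forces each abelian $\cN_n$ in $\prod_n\mathrm M_n(\cN_n)$ to be atomic. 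You instead make the center atomic at the outset, decompose $\cM$ as the $\ell_\infty$-product of the factors $z_i\cM$ over the minimal central projections, and then classify each factor; this trades the full homogeneous decomposition of type I algebras for the more elementary facts that an atomic center yields a product of factors and that a type I factor is $\cB(H)$. The normality bookkeeping you flag (corner inclusions $a\mapsto a\oplus 0$, coordinate injections into an $\ell_\infty$-product, inclusions of weak$^*$-closed subalgebras) is routine and checks out against criterion \eqref{1.normal} of Proposition~\ref{n cons}, exactly as you say. Your intermediate claim that a minimal projection $p$ of a masa $\cA\subseteq\cN$ forces $p\cN p=\C p$ is also correct and supplies the detail the paper leaves implicit in its assertion that masas of type II and III algebras are diffuse.
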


\begin{proof} 
Using the type decomposition for von Neumann algebras (see for example \cite[III.1.4.7]{Black:Operator} or \cite[Theorem~V.1.19]{Tak:TheoryI}) and the structure theory of type I von Neumann algebras (see for example \cite[III.1.5.12 and III.1.5.13]{Black:Operator} or \cite[Theorem~V.1.27]{Tak:TheoryI}), we may write $\cM$ as a direct sum 
\[
\cM=\cM_I\oplus \cM_{II}\oplus \cM_{III}
\]
where $\cM_{I}$ is the direct product of von Neumann algebras of the form $\cB(H_\aleph)\overline{\otimes} \cN_\aleph$ with $H_\aleph$ is a Hilbert space of dimension $\aleph$ for a cardinal $\aleph$ and~$\cN_\aleph$ an abelian von Neumann algebra (possibly zero), and $\cM_{II}$ and~$\cM_{III}$ are of types II and III respectively (possibly zero). 

Now, if there is an infinite $\aleph$ such that one of the algebras $\cB(H_\aleph)\overline{\otimes} \cN_\aleph$ appearing in $\cM_I$ is non-zero then $H_\aleph$ contains an isometrically embedded copy of $L_2[0,1]$. Hence $\cM$ contains a normally embedded copy of $\cB(L_2[0,1])$, and therefore a normally embedded copy of the diffuse von Neumann algebra $L_\infty[0,1]$, which is impossible. If $\cM_{II}$ or $\cM_{III}$ are non-zero, then at least one of them contains a non-zero maximal abelian subalgebra $\cN$. As type II and type III von Neumann algebras have no minimal projections, $\cN$ is diffuse, again contradicting our assumption. Hence we may assume $\cM=\cM_I=\prod_{n\in \N} \mathrm{M}_n(\cN_n)$. 

As $\cM$ contains no abelian diffuse subalgebras, each $\cN_n$ must be of the form $\ell_\infty(I_n)$ for some set $I_n$ (possibly empty). As $\mathrm{M}_n(\ell_\infty(I_n))\cong \prod_{i\in I_n} \mathrm{M}_n(\C)$, the result follows.
\end{proof}

We are now ready for the proof of Theorem \ref{Thm.nVa.Equi-Approx.Unit.Ball} from the introduction.

\begin{proof}[Proof of Theorem \ref{Thm.nVa.Equi-Approx.Unit.Ball}] Suppose $X$ is a u.l.f.\ metric space and $\cM\subseteq \cstql(X)$ is a WOT-closed \cstar-subalgebra. Theorem \ref{ThmLInftyWeakStar} implies there is no normal embedding of a diffuse abelian von Neumann algebra into $\cM$. Proposition~\ref{PropvNaStruc} implies that $\cM$ is isomorphic to a product of matrix algebras; moreover, as $\ell_2(X)$ is separable, there can be at most countably many matrix algebras appearing in the product. Proposition~\ref{Corollary.Lemma.equi} implies that if $\cM\subseteq \cstql(X)$ then the unit ball of $\cM$ is equi-quasi-local, and that if $\cM\subseteq \cstu(X)$, then the unit ball of $\cM$ is equi-approximable.
\end{proof}

\section{Products of matrix algebras inside quasi-local algebras}\label{SectionProdMatrix}

Combining Theorem~\ref{ThmLInftyWeakStar} and Proposition~\ref{PropvNaStruc}, in order to understand which von Neumann algebras can be normally embedded inside quasi-local algebras, it suffices to focus on von Neumann algebras of the form $\prod_k\mathrm M_{n_k}(\C)$ for some countable collection $(n_k)_k$ of natural numbers. In this section, we obtain Theorem \ref{ThmBlockMatrix} from the introduction with the extra hypothesis that the embedding is also normal. We will then show in \S\ref{SectionMakingWeakSCont} that this hypothesis is satisfied automatically. 

The following is the main result of this section.

\begin{theorem}\label{ThmBlockMatrixWeakStar}
Let $X$ be a u.l.f.\ metric space, and let $(n_k)_k$ be a sequence of natural numbers that tends to infinity. Then any normal embedding of $\cM:=\prod_k \mathrm M_{n_k}(\bbC)$ into $\cstql(X)$ that sends $\bigoplus_k \mathrm M_{n_k}(\bbC)$ to the ideal of ghost operators sends all of $\cM$ to the ideal of ghost operators. 
\end{theorem}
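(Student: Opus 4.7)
The plan is to use Lemma~\ref{Lemma.equi.0} to extract equi-quasi-locality of $\Phi(\cM)_1$, reduce to showing the single projection $p:=\Phi(1_{\cM})$ is a ghost, and then derive a contradiction using the finite-dimensional vector measure machinery of Lemma~\ref{LemmaCorollaryMeasureURA} together with the divergence $n_k\to\infty$.

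For the setup: normality of $\Phi$ makes $\Phi(\cM)$ a WOT-closed $*$-subalgebra of $\cB(\ell_2(X))$ contained in $\cstql(X)$, and the partial sums $P_N:=\sum_{k\leq N}\Phi(1_{\mathrm M_{n_k}(\C)})$ form an increasing sequence of central projections in $\Phi(\cM)$ with each corner $P_N\Phi(\cM)P_N\cong\prod_{k\leq N}\mathrm M_{n_k}(\C)$ finite-dimensional and $\mathrm{SOT}\text{-}\lim P_N=\Phi(1_{\cM})$. Hence Lemma~\ref{Lemma.equi.0}\eqref{Lemma.equi.Item1} (the direct-sum trick in its proof handles the possibly non-unital inclusion) gives that the unit ball of $\Phi(\cM)$ is equi-quasi-local. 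For the reduction, the identity $\Phi(a)=\Phi(a\cdot 1_{\cM})=\Phi(a)\Phi(1_{\cM})$ implies $\|\Phi(a)\delta_x\|\leq\|a\|\cdot\|\Phi(1_{\cM})\delta_x\|$ for every $x\in X$, so it suffices to prove that $p$ itself is a ghost.

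I would then argue by contradiction. Suppose $p$ is not a ghost, and fix $\eps_0>0$ and a sequence of distinct points $(x_m)_m\subset X$ with $\|p\delta_{x_m}\|\geq\eps_0$ for every $m$. Writing $p_k:=\Phi(1_{\mathrm M_{n_k}(\C)})$, the centrality of the $1_{\mathrm M_{n_k}(\C)}$ in $\cM$ makes the $\{p_k\}_k$ orthogonal in $\Phi(\cM)$, so $\sum_k\|p_k\delta_{x_m}\|^2\geq\eps_0^2$. The hypothesis that $\Phi(\bigoplus_k\mathrm M_{n_k}(\C))$ consists of ghosts forces every matrix unit $v_{ij}^k:=\Phi(e_{ij}^k)$, and in particular each $p_k$, to be a ghost; hence for every fixed $k$ one has $\|p_k\delta_{x_m}\|\to 0$ as $m\to\infty$, which pushes the mass in the sum above to arbitrarily large $k$. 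Now pick a small $\eta>0$, apply equi-quasi-locality to obtain $r>0$ such that every $v_{ij}^k$ is $\eta$-$r$-quasi-local, and set $M:=\sup_x|B(x,r)|<\infty$ by u.l.f. The vectors $\xi_i^{k,m}:=v_{1i}^k\delta_{x_m}$ all lie in the range of $Q_1^k:=\Phi(e_{11}^k)$, satisfy $\sum_{i=1}^{n_k}\|\xi_i^{k,m}\|^2=\|p_k\delta_{x_m}\|^2$, and each is within $\eta$ in norm of the $M$-dimensional subspace $\ell_2(B(x_m,r))$. Lemma~\ref{LemmaCorollaryMeasureURA}, applied to the finitely additive measure $S\subseteq\N\mapsto \chi_{B(x_m,r)}\Phi(\sum_{k\in S}1_{\mathrm M_{n_k}(\C)})\delta_{x_m}$ taking values in the $M$-dimensional space $\ell_2(B(x_m,r))$, should then force the total squared mass $\sum_k\|p_k\delta_{x_m}\|^2$ to be concentrated on indices $k\leq K$ for some $K=K(M,\eps_0,\eta)$ independent of $m$: once $n_k>M$, the $n_k$ vectors $\xi_i^{k,m}$ cannot collectively carry significant total norm while all sitting $\eta$-close to a fixed $M$-dimensional subspace. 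Combining this with $\sum_{k\leq K}\|p_k\delta_{x_m}\|^2\to 0$ as $m\to\infty$ for each fixed $K$ contradicts the lower bound $\eps_0^2$.

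The main obstacle will be rigorously executing the last vector measure step. The interplay between the approximation error $\eta$ introduced by equi-quasi-locality and the finite-rank obstruction coming from $n_k\to\infty$ versus the bounded local dimension $M$ demands precise quantitative bookkeeping, and this is exactly the role of Lemma~\ref{LemmaCorollaryMeasureURA}, a refinement of the atomic Lyapunov-type convexity argument developed in \cite{BaudierBragaFarahKhukhroVignatiWillett2021uRaRig}.
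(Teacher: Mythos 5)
Your setup and reduction are fine: the partial sums $P_N=\sum_{k\leq N}\Phi(1_{\mathrm M_{n_k}(\C)})$ do satisfy the hypotheses of Lemma~\ref{Lemma.equi.0}, and the estimate $\|\Phi(a)\delta_x\|\leq\|a\|\,\|\Phi(1_{\cM})\delta_x\|$ correctly reduces everything to showing that $p=\Phi(1_{\cM})$ is a ghost. The gap is in your final step. The obstruction you invoke --- that for $n_k>M$ the vectors $\xi_i^{k,m}=v_{1i}^k\delta_{x_m}$ ``cannot collectively carry significant total norm while all sitting $\eta$-close to a fixed $M$-dimensional subspace'' --- is false for non-orthogonal vectors, and your vectors are not orthogonal: their Gram matrix is $\langle\xi_j^{k,m},\xi_i^{k,m}\rangle=\langle\Phi(e_{ij}^k)\delta_{x_m},\delta_{x_m}\rangle$, a general positive matrix of trace $\|p_k\delta_{x_m}\|^2$. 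For instance, $n_k$ copies of one vector of norm $\eps_0/\sqrt{n_k}$ lie in a one-dimensional subspace and carry total squared norm $\eps_0^2$; moreover the quasi-locality error only gives $\sum_i\|(1-\chi_{B(x_m,r)})\xi_i^{k,m}\|^2\leq n_k\eta^2$, which blows up with $n_k$. Finally, Lemma~\ref{LemmaCorollaryMeasureURA} is not a concentration statement for vector measures; applying it to your block-indexed measure $S\mapsto\chi_{B(x_m,r)}\Phi(\sum_{k\in S}1_{\mathrm M_{n_k}(\C)})\delta_{x_m}$ does not produce a cutoff $K=K(M,\eps_0,\eta)$, and no uniformity in $m$ comes out of it. So the contradiction does not materialize as written.

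What is actually needed --- and what Lemma~\ref{Claim4.6}, built on the Shapley--Folkman bound of Lemma~\ref{LemmaShapleyFolkman} and the identity $\|\xi\|=2\|p\xi-\tfrac12\xi\|$ of Lemma~\ref{LemmaProfHalf}, delivers --- is a \emph{single} diagonal matrix unit $e_{i,i}^k$, with $k$ arbitrarily large, satisfying $\|\Phi(e_{i,i}^k)\delta_x\|\geq\delta$ for a $\delta>0$ independent of $k$ and $x$. This converts ``the sum of all diagonal contributions at $x$ is at least $\eps_0$'' into ``one diagonal contribution is at least $\delta$,'' something no trace or dimension count can do. With that in hand, the right family to count is $\{v_{j,i}^k\delta_x\}_{j=1}^{n_k}$ for that fixed good source index $i$ (not your $\{v_{1,i}^k\delta_x\}_i$ with varying source): these \emph{are} mutually orthogonal, since $\langle v_{j',i}^k\delta_x,v_{j,i}^k\delta_x\rangle=\langle\Phi(e_{ij}^ke_{j'i}^k)\delta_x,\delta_x\rangle=0$ for $j\neq j'$, and each has norm $\|\Phi(e_{ii}^k)\delta_x\|\geq\delta$ rather than something of order $1/\sqrt{n_k}$. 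Equi-quasi-locality then forces each to place mass at least $\delta/(2m)$ at some point of $B_r(x)$, and orthogonality of the range projections $\Phi(e_{jj}^k)$ bounds how many $j$ can share a point, yielding $|B_r(x)|\gtrsim n_k\delta^2/(4m^2)\to\infty$ and contradicting uniform local finiteness --- this is exactly the paper's proof of Theorem~\ref{ThmBlockMatrixWeakStar2}. Your outline never extracts the uniform single-entry lower bound, which is the decisive step.
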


%(As pointed out in the introduction, the algebra $\prod_n \mathrm M_n(\bbC)$ is universal for the algebras of the form $\prod_k \mathrm M_{n_k}(\bbC)$.)

The proof of Theorem~\ref{ThmBlockMatrixWeakStar} will proceed via a series of lemmas. The first of these is a simple observation about Hilbert spaces and can be found, for instance, in \cite[Lemma 3.1]{BaudierBragaFarahKhukhroVignatiWillett2021uRaRig}. We include its short proof here for the reader's convenience. 
 
\begin{lemma}\label{LemmaProfHalf}
Let $H$ be a Hilbert space, $p\in \cB(H)$ be a projection, and $\xi\in H$. Then $\|\xi\|= 2\|p\xi-\frac{1}{2}\xi\|$. 
\end{lemma}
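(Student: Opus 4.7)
The statement is an elementary identity about projections on a Hilbert space, so the plan is simply to compute. The most transparent route is to use the orthogonal decomposition induced by $p$: write $\xi = p\xi + (1-p)\xi$, where $p\xi$ and $(1-p)\xi$ are orthogonal because $p$ is a (self-adjoint) projection. By the Pythagorean theorem, $\|\xi\|^2 = \|p\xi\|^2 + \|(1-p)\xi\|^2$.

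Next, I would rewrite the vector inside the norm on the right-hand side in terms of this decomposition:
\[
p\xi - \tfrac{1}{2}\xi \;=\; p\xi - \tfrac{1}{2}p\xi - \tfrac{1}{2}(1-p)\xi \;=\; \tfrac{1}{2}p\xi - \tfrac{1}{2}(1-p)\xi.
\]
Since $p\xi$ and $(1-p)\xi$ are still orthogonal, applying Pythagoras once more yields
\[
\bigl\|p\xi - \tfrac{1}{2}\xi\bigr\|^2 \;=\; \tfrac{1}{4}\|p\xi\|^2 + \tfrac{1}{4}\|(1-p)\xi\|^2 \;=\; \tfrac{1}{4}\|\xi\|^2.
\]
Taking square roots and multiplying by $2$ gives the claim. (Alternatively, one could expand the inner product $\langle p\xi - \tfrac{1}{2}\xi, p\xi - \tfrac{1}{2}\xi\rangle$ and use $p^2 = p = p^*$ to collapse the cross-terms; this yields the same identity $\|p\xi - \tfrac{1}{2}\xi\|^2 = \tfrac{1}{4}\|\xi\|^2$.)

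There is no serious obstacle here: the only thing to be careful about is to invoke self-adjointness of $p$ when claiming orthogonality of $p\xi$ and $(1-p)\xi$, since $\langle p\xi, (1-p)\xi\rangle = \langle (1-p)p\xi,\xi\rangle = 0$ uses $p^* = p$ together with $p(1-p) = 0$.
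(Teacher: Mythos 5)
Your proof is correct. The paper's own argument is a one-liner that packages the same underlying fact differently: it sets $u = 2p - 1$, observes that $u$ is a (self-adjoint) unitary and hence an isometry, and concludes $\|\xi\| = \|u\xi\| = \|2p\xi - \xi\| = 2\|p\xi - \tfrac{1}{2}\xi\|$. Your computation via the orthogonal decomposition $\xi = p\xi + (1-p)\xi$ and two applications of the Pythagorean theorem is essentially a direct verification that this $u$ is an isometry, so the two proofs rest on the same facts ($p = p^* = p^2$); yours is marginally more hands-on, while the paper's is slightly slicker in that it outsources the computation to the general statement that unitaries preserve norms. Your closing remark about where self-adjointness of $p$ enters is exactly the right point of care; nothing is missing.
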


\begin{proof}
Let $u=2p-1$, so $u$ is a unitary, and in particular an isometry. Hence
\[
\|\xi\|=\|u\xi\|= \|2p\xi-\xi\|=2\|p\xi-\tfrac{1}{2}\xi\|. \qedhere 
\]
\end{proof}

The next lemma we need for the proof of Theorem~\ref{ThmBlockMatrixWeakStar} is about vector measures, and is taken from \cite[Lemma~2.1]{BaudierBragaFarahKhukhroVignatiWillett2021uRaRig}\footnote{The statement of that lemma includes an extra ``$+\epsilon$'' on the right hand side of the inequality in the conclusion. However, the extra $\epsilon$ is only necessary if one wants the set~$A$ in the conclusion to be finite: see the first of the two proofs of Lemma~2.1 given in \cite{BaudierBragaFarahKhukhroVignatiWillett2021uRaRig}.}. Recall that a \emph{vector measure} is a function $\mu$ from a $\sigma$-algebra $\Sigma$ of subsets of a given set into a Banach space $E$ which is countably additive in the sense that if $(A_n)_{n}$ is a sequence of disjoint elements of $\Sigma$ then $\mu(\bigcup_nA_n)=\sum_n\mu(A_n)$, where the sum converges in norm. %If $E$ has finite dimension, then $\mu$ is called a \emph{finite-dimensional vector measure}. 
The norm on $\R^m$ in the statement of the lemma is arbitrary, and the notation $\mathrm{conv}(S)$ refers to the convex hull of a subset $S\subseteq \R^m$.

\begin{lemma}[{\cite{BaudierBragaFarahKhukhroVignatiWillett2021uRaRig}}]\label{LemmaShapleyFolkman}
Let $X$ be a set, $m\in\N$, and $\mu\colon\cP(X)\to( \R^m,\|\cdot\|)$ be a vector measure. For all $\xi\in \mathrm{conv}(\mu[\cP(X)])$ there is $A\in \cP(X)$ with 
\[
\|\xi-\mu(A)\|\leq m\sup_{x\in X}\|\mu(\{x\})\|. \eqno\qed
\]
\end{lemma}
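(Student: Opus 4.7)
The inequality is an ``atomic Lyapunov-type'' statement: the convex hull of the range of a finite-dimensional vector measure sits inside a $ms$-neighborhood of the range itself, where $s=\sup_{x\in X}\|\mu(\{x\})\|$. The plan is therefore to split $\mu$ into a non-atomic piece, handled exactly by Lyapunov's convexity theorem, and a countable atomic piece, handled up to error $ms$ by the classical (finite) Shapley--Folkman lemma together with a compactness argument.

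First I would check the basic setup. A vector measure into $\R^m$ is automatically bounded, so $s<\infty$ (if $s=\infty$ there is nothing to prove). Let $Y=\{x\in X\mid \mu(\{x\})\ne 0\}$; countable additivity forces $Y$ to be countable, since otherwise uncountably many atoms would have norm bounded below by some $1/n$ and the series $\sum_k \mu(\{y_k\})$ over a countable subset would fail to converge. Next, by Caratheodory's theorem in $\R^m$ I would write $\xi=\sum_{i=1}^{m+1} t_i \mu(B_i)$ with $t_i\in[0,1]$ summing to one, and then refine the $B_i$ to the finite partition $\{C_1,\dots,C_K\}$ they generate, getting $\xi=\sum_{j=1}^K \alpha_j \mu(C_j)$ with $\alpha_j\in[0,1]$. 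Splitting $C_j=(C_j\cap Y)\sqcup(C_j\setminus Y)$ separates the atomic and non-atomic contributions.

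For the non-atomic part, the measure $\mu$ restricted to $\cP(X\setminus Y)$ is non-atomic, so Lyapunov's convexity theorem applies: for each $j$ I can find $D_j^n\subseteq C_j\setminus Y$ with $\mu(D_j^n)=\alpha_j\mu(C_j\setminus Y)$ exactly. Setting $D^n=\bigsqcup_j D_j^n$ yields $\mu(D^n)=\sum_j \alpha_j \mu(C_j\setminus Y)$ with no error. The atomic contribution rearranges as $\sum_{y\in Y'} \alpha(y)\mu(\{y\})$ for $Y'=Y\cap\bigcup_i B_i$ and $\alpha(y)\in[0,1]$. Since unconditional convergence equals absolute convergence in $\R^m$, one has $\sum_{y\in Y'}\|\mu(\{y\})\|<\infty$. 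Pick an exhausting chain of finite sets $F_1\subseteq F_2\subseteq\cdots$ with $\bigcup_N F_N=Y'$ and apply the finite Shapley--Folkman lemma to $\sum_{y\in F_N}\alpha(y)\mu(\{y\})$: this gives $\varepsilon^N\colon F_N\to\{0,1\}$ with at most $m$ indices where $\alpha$ is fractional, so that $\bigl\|\sum_{y\in F_N}\alpha(y)\mu(\{y\})-\sum_{y\in F_N,\,\varepsilon^N(y)=1}\mu(\{y\})\bigr\|\le m s$.

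To produce a single selection, I would use compactness of $\{0,1\}^{Y'}$ (Tychonoff): extract a subsequence along which $\varepsilon^N(y)$ stabilizes pointwise to some $\varepsilon(y)\in\{0,1\}$. Absolute convergence lets one control the tail uniformly, so one may pass to the limit and conclude $\bigl\|\sum_{y\in Y'}\alpha(y)\mu(\{y\})-\sum_{y\in Y',\,\varepsilon(y)=1}\mu(\{y\})\bigr\|\le ms$. Finally, set $A=D^n\sqcup\{y\in Y'\mid\varepsilon(y)=1\}$; the non-atomic term cancels exactly and the atomic error is $\le ms$, giving $\|\xi-\mu(A)\|\le ms$. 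The main obstacle is this last countable-to-finite reduction: Shapley--Folkman is inherently finitary, and one has to combine pointwise compactness with absolute convergence to extract a global $\{0,1\}$-valued choice without losing the uniform bound; the non-atomic cancellation via Lyapunov is what makes this work cleanly, without introducing the extra $\epsilon$ that would be needed if one insisted on a finite witness $A$.
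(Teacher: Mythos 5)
This paper does not actually prove Lemma~\ref{LemmaShapleyFolkman}: it is quoted without proof from Lemma~2.1 of \cite{BaudierBragaFarahKhukhroVignatiWillett2021uRaRig} (hence the tombstone in the statement and the footnote), so the only comparison available is with the proofs given there. Your argument is a correct reconstruction of the $\epsilon$-free variant that the footnote alludes to, in which $A$ is allowed to be infinite. Its two main ingredients are sound: the extreme-point (Shapley--Folkman) rounding of $\sum_{y}\alpha(y)\mu(\{y\})$ over a finite index set with error at most $m\sup_{x}\|\mu(\{x\})\|$, and the passage from the finite sets $F_N$ to all of $Y'$ by combining pointwise compactness of $\{0,1\}^{Y'}$ with absolute summability of $(\mu(\{y\}))_{y\in Y'}$. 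I checked the latter: for fixed $M$ one bounds $\|\sum_{y\in F_M}(\alpha(y)-\varepsilon^{N_k}(y))\mu(\{y\})\|$ by $ms$ plus the tail $\sum_{y\notin F_M}\|\mu(\{y\})\|$, and letting $M\to\infty$ after $k\to\infty$ preserves the bound $ms$, so the limit selection $\varepsilon$ works.

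The one step you assert without justification is that $\mu$ restricted to $\cP(X\setminus Y)$ is non-atomic. This is not automatic for an arbitrary set $X$: an atom of the variation of the restricted measure need not contain a point of positive mass, and on such a ``non-principal'' atom $E$ the measure takes only the values $0$ and $\mu(E)$ while $\sup_{x\in E}\|\mu(\{x\})\|=0$. Indeed, if $|X|$ carries a $\sigma$-complete nonprincipal ultrafilter, the associated $\{0,1\}$-valued measure with $\xi=1/2$ refutes the lemma as literally stated, so some hypothesis excluding this is implicit in the source; it holds whenever $|X|$ is below the first measurable cardinal, and in particular in the only case this paper uses, where $X$ is countable. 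In that case your Lyapunov step is vacuous anyway: countable additivity on the full power set of a countable set forces $\mu(A)=\sum_{x\in A}\mu(\{x\})$, so $\mu$ vanishes on $\cP(X\setminus Y)$ and the entire content of the lemma is the atomic rounding you carry out. Adding either the hypothesis that $X$ is countable or an appeal to Ulam's theorem to dispose of the diffuse part would close this gap; with that caveat the proof is complete. (Two cosmetic points: the superscript $n$ on $D_j^n$ and $D^n$ is never quantified and should be dropped, and in the rounding step it is an auxiliary extreme point $\beta$, not $\alpha$ itself, that has at most $m$ fractional coordinates.)
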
 

We will use this to establish the following result, which is closely related to \cite[Lemma 3.2]{BaudierBragaFarahKhukhroVignatiWillett2021uRaRig}.

\begin{lemma}\label{Claim4.6} 
Let $X$ be a u.l.f.\ metric space. Let $(p_s)_{s\in S}$ be an orthogonal family of projections on $\ell_2(X)$, and assume that for every $A\subseteq S$ the projection $p_A:=\sum_{s\in A}p_s$ is contained in $\cstql(X)$. 

Then for every $\gamma>0$ there is $\delta>0$ (depending on $\gamma$, the geometry of $X$, and the family $(p_s)_s$) such that if $A\subseteq S$ and $X_A:=\{x\in X\mid \|p_A \delta_x\|>\gamma\}$ then for every $x\in X_A$ there exists $s\in A$ such that $\|p_s\delta_x\|\geq \delta$. 
%Fix $\gamma>0$. Suppose that for each $k\in\N$ some $X_k\subseteq X$ and $N_k\subseteq \N$ satisfy $\|p_{N_{k}}\delta_x\|>\gamma$ for all $x\in X_k$. Then
% \[\inf_{k\in\N}\inf_{x\in X_k}\sup_{n\in N_k}\|p_n\delta_x\|>0.\]
\end{lemma}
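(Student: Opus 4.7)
\emph{Proof plan.} The plan is to apply the equi-quasi-locality machinery of Lemma~\ref{Lemma.equi.0} to the family $\{p_A\mid A\subseteq S\}$ and then combine it with the vector-measure Lemma~\ref{LemmaShapleyFolkman} and the half-vector identity Lemma~\ref{LemmaProfHalf} to force a contradiction when $\delta$ is chosen too small.

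Since $\ell_2(X)$ is separable, at most countably many of the $p_s$ are non-zero, so I may assume that $S$ is countable and enumerated as $S=\{s_1,s_2,\dots\}$. Let $\cM\subseteq\cB(\ell_2(X))$ be the abelian von Neumann algebra generated by $\{p_s\mid s\in S\}\cup\{1\}$, with minimal projections $\{p_s\mid s\in S\}\cup\{p_*\}$, where $p_*:=1-p_S$. A key preliminary is that $\cM\subseteq\cstql(X)$: a positive contraction $f=\sum_s\lambda_sp_s+\lambda_*p_*\in\cM$ can be norm-approximated to within $1/N$ by the step function $\sum_{k=0}^{N-1}\tfrac{k}{N}p_{A_k}+\lambda_*p_*$, where $A_k:=\{s\mid \lambda_s\in[k/N,(k+1)/N)\}$, and each such step function lies in $\cstql(X)$ by hypothesis. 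Taking $q_n:=p_{s_1}+\cdots+p_{s_n}$ yields an increasing sequence of central projections in $\cM$ that is SOT-convergent to $1_\cM$ and with each $q_n\cM q_n$ finite-dimensional, so part (1) of Lemma~\ref{Lemma.equi.0} implies that $\{p_A\mid A\subseteq S\}$ is equi-quasi-local.

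Given $\gamma>0$, set $\eps:=\gamma/6$ and take $r>0$ so that every $p_A$ is $\eps$-$r$-quasi-local; let $K:=\sup_{x\in X}|\{y\in X\mid d(y,x)\leq 2r\}|$, which is finite by u.l.f. I would show the statement holds with $\delta:=\gamma/(16K)$. Suppose not: pick $A\subseteq S$ and $x\in X_A$ with $\|p_s\delta_x\|<\delta$ for every $s\in A$, and set $q:=\chi_{\{y\mid d(y,x)\leq 2r\}}$. The assignment
\[
\mu\colon\cP(A)\to q\ell_2(X),\quad \mu(B):=qp_B\delta_x
\]
defines a countably additive vector measure, since orthogonality of the $p_s$ gives $\|p_B\delta_x\|^2=\sum_{s\in B}\|p_s\delta_x\|^2$. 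As $\|\mu(\{s\})\|\leq\|p_s\delta_x\|<\delta$ and the target space is at most $2K$-real-dimensional, applying Lemma~\ref{LemmaShapleyFolkman} to $\tfrac12\mu(A)\in\mathrm{conv}(\mu[\cP(A)])$ produces $B\subseteq A$ with $\|\mu(B)-\tfrac12\mu(A)\|\leq 2K\delta$.

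On the other hand, $p_Bp_A=p_B$ since $B\subseteq A$ and the $p_s$ are orthogonal, so Lemma~\ref{LemmaProfHalf} applied to the projection $p_B$ and the vector $p_A\delta_x$ yields $\|p_B\delta_x-\tfrac12p_A\delta_x\|=\tfrac12\|p_A\delta_x\|>\gamma/2$. Quasi-locality of $p_A$ and $p_B$ applied to the sets $\{x\}$ and $X\setminus\{y\mid d(y,x)\leq 2r\}$, which lie at distance $>r$, gives $\|(1-q)p_A\delta_x\|\leq\eps$ and $\|(1-q)p_B\delta_x\|\leq\eps$, so the triangle inequality forces
\[
\|\mu(B)-\tfrac12\mu(A)\|\geq\|p_B\delta_x-\tfrac12p_A\delta_x\|-\|(1-q)p_B\delta_x\|-\tfrac12\|(1-q)p_A\delta_x\|>\tfrac{\gamma}{2}-\tfrac{3\eps}{2}=\tfrac{\gamma}{4}.
\]
Combining with the Shapley--Folkman bound, $2K\delta>\gamma/4$, contradicting the choice of $\delta$. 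The main obstacle I anticipate is verifying the inclusion $\cM\subseteq\cstql(X)$, since elements of this commutative von Neumann algebra are typically not norm-limits of finite sub-sums of the $p_s$; once this is in place, the remainder of the argument is a careful bookkeeping exercise with the constants $\eps$ and $\delta$.
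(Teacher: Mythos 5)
Your proof is correct and follows essentially the same route as the paper's: equi-quasi-locality of $\{p_A\mid A\subseteq S\}$ via Lemma~\ref{Lemma.equi.0}, the vector measure $\mu(B)=\chi_{B_r(x)}p_B\delta_x$ truncated to a ball around $x$, Lemma~\ref{LemmaShapleyFolkman} applied to $\tfrac12\mu(A)$, and Lemma~\ref{LemmaProfHalf} to force the contradiction (the paper works with the ball of radius $r$ and the constants $\eps<\gamma/8$, $2m\delta<\gamma/8$, but the bookkeeping is interchangeable with yours). The one point to tidy is that, having adjoined the unit to your $\cM$, the projections $q_n=p_{s_1}+\cdots+p_{s_n}$ converge SOT to $p_S$ rather than to $1_\cM=1$; either omit the unit from the generators (as the paper does, so that $1_\cM=p_S$) or replace $q_n$ by $q_n+p_*$, which keeps each corner finite-dimensional and makes the hypotheses of Lemma~\ref{Lemma.equi.0} literally hold.
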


\begin{proof}
Fix $\eps\in(0,\gamma/8)$. Let $\cM\cong \ell_\infty(S)$ be the von Neumann algebra generated by the projections $p_s$. As any element of $\ell_\infty(S)$ can be approximated in norm by a finite linear combination of projections of the form $p_A$ for $A\subseteq S$, $\cM$ is contained in $\cstql(X)$. Let $S_1\subseteq S_2\subseteq \cdots$ be a sequence of finite subsets of $S$ with union $S$ (such sequence exists as separability of $\ell_2(X)$ implies that $S$ is countable). Applying Lemma \ref{Lemma.equi.0},\footnote{Notice that this is not the reason why we proved Lemma \ref{Lemma.equi.0}. In fact, for the current proof, \cite[Lemma 3.2]{SpakulaWillett2013} would suffice (compare also \cite[Lemma 4.9]{BragaFarah2018Trans}). The novelty in Lemma \ref{Lemma.equi.0} will be needed only for Theorem \ref{ThmBlockMatrix}.} with $p_n:=p_{S_n}$ gives $r>0$ such that $p_A$ is $\eps$-$r$-quasi-local for all $A \subseteq S$. Since $X$ is u.l.f., 
\[
m:=\sup_{x\in X}|B_{r}(x)|
\]
is finite. Fix $\delta>0$ such that $2m\delta<\gamma/8$; we claim that this $\delta$ has the required property. 

Assume otherwise for contradiction: if the conclusion of the lemma is false, we can find $A\subseteq S$ and $x\in X_A$ such that
\begin{equation*} \label{I.y(k)} 
\sup_{s\in A} \|p_s \delta_x\|<\delta.
\end{equation*}
Define a vector measure $\mu \colon \cP(A)\to \ell_2(B_r(x))$ by 
\begin{equation}\label{eq:aux-1}
\mu(B):=\chi_{B_r(x)} p_B \delta_x.
\end{equation} 
By our choice of $x$, we have that 
\[
\sup_{s\in A}\|\mu(\{s\})\|\leq\sup_{s\in A}\|p_s\delta_x\|< \delta.
\] 
Since $\dim_{\R}(\ell_2(B_r(x)))=2\dim_{\C}(\ell_2(B_r(x)))\leq 2m$, Lemma \ref{LemmaShapleyFolkman} gives $B\subseteq A$ such that 
\begin{equation}\label{Eq.ConditionOnA}
\|\mu(B)-\tfrac{1}{2}\chi_{B_r(x)}p_{A}\delta_x\|<2m\delta.
\end{equation}
By our choice of $r$, $p_B$ and $p_{A}$ are $\eps$-$r$-quasi-local. Therefore, as $p_B$ is $\varepsilon$-$r$-quasi-local and $d(x,X\setminus B_r(x))>r$, we get 
\begin{equation}\label{eq:aux-2}
\|\chi_{X\setminus B_r(x)}p_B\delta_{x}\|\leq \eps\ \text{ and }\ \|\chi_{X\setminus B_r(x)}p_{A}\delta_{x}\|\leq \eps.
\end{equation}
Let $\xi=p_{A}\delta_x$, and notice that $p_B\xi=p_B\delta_x$. Then
\begin{align*}
\|p_B\xi-\tfrac{1}{2}\xi\|=&\|p_B\delta_x-\tfrac{1}{2}p_{A}\delta_x\|\\
 \leq & \|p_B\delta_x-\chi_{B_r(x)}p_B\delta_x\|+\|\chi_{B_r(x)}p_B\delta_x-\tfrac{1}{2}\chi_{B_r(x)}p_{A}\delta_x\|\\ & + \|\tfrac{1}{2}\chi_{B_r(x)}p_{A}\delta_x-\tfrac{1}{2}p_{A}\delta_x\|
\end{align*}
The first term is, by the first part of \eqref{eq:aux-2}, not greater than $\varepsilon$. The second term is, by \eqref{Eq.ConditionOnA}, smaller than $2m\delta$, and the third term is, by the second part of \eqref{eq:aux-2}, not greater than $\varepsilon/2$. Therefore 
\[
\|p_B\xi -\tfrac 12 \xi\|\leq 2m\delta+\tfrac 32 \varepsilon<\tfrac {5\gamma}{16}.
\]
Lemma \ref{LemmaProfHalf} then implies that $\|\xi\|=\|p_{A}\delta_x\|<\gamma$. This is a contradiction since $\|p_{A}\delta_x\|>\gamma$ for all $x\in X_A$. 
\end{proof}

 The reader should compare the following definition to the usual notion of a ghost operator (Definition \ref{ghost def} from the introduction).

\begin{definition}\label{Def.Asymptotically}
Let $X$ be a u.l.f.\ metric space and let $(q_s)_{s\in S}$ be an orthogonal family of projections on $\ell_2(X)$. We say that $(q_s)_{s\in S}$ is \emph{asymptotically a ghost} (or an \emph{asymptotic ghost}) if for all $\epsilon>0$ there are finite subsets $F\subseteq X$ and $T\subseteq S$ such that 
\[
\Bigg\|\sum_{s\in S\setminus T} q_s\delta_x\Bigg\|<\epsilon
\]
for all $x\in X \setminus F$. 
\end{definition}

\begin{remark}\label{RemarkAsympGhosts}
As Definition \ref{Def.Asymptotically} is quite technical, let us make a few remarks. 
\begin{enumerate}[wide]
\item \label{ghost in} Let $q\leq r$ be projections on $\ell_2(X)$, and for $x\in X$ let $p_x$ be the projection onto the span of $\delta_x$. Then the \cstar-identity implies that
\begin{equation}\label{decr}
\|q\delta_x\|^2=\|qp_x\|^2=\|p_xqp_x\|\leq \|p_xrp_x\|=\|rp_x\|^2=\|r\delta_x\|.
\end{equation}
Hence in particular, if there is a finite subset $T\subseteq S$ (possibly just the empty set) such that $\SOTh\sum_{q\in S\setminus T}q_s$ is a ghost, then $(q_s)_{s\in S}$ is asymptotically a ghost.
\item \label{gag} If\footnote{This remark will be used in the proof of Theorem~\ref{ThmBlockMatrixWeakStar}.} $(q_s)_{s\in S}$ is an asymptotic ghost, and if each $q_s$ is itself a ghost (for example, if it has  finite rank), then $\SOTh\sum_{s\in S}q_s$ is also a ghost. Indeed, given $\epsilon>0$ let $T\subseteq S$ and $F\subseteq X$ be finite sets such that $\|\sum_{s\in S\setminus T}q_s\delta_x\|<\epsilon/2$ for all $x\not\in F$. As $\sum_{s\in T}q_s$ is a ghost, there is finite $F'\subseteq X$ such that $\|\sum_{s\in T}q_s\delta_x\|<\epsilon/2$ for all $x\not\in F'$. Hence for $x\not\in F\cup F'$, $\|\sum_{s\in S} q_s\delta_x\|<\epsilon$. 

In particular, this discussion and the point \label{ghost in} above show that asymptotic ghosts are only really interesting when the projections $q_s$ have infinite rank.
\item \label{bad ag} The converse to point \eqref{ghost in} above is false: there are asymptotic ghosts $(q_s)_{s\in S}$ such that for every finite subset $T\subseteq S$, $\SOTh\sum_{s\in S\setminus T}q_s$ is not a ghost. Indeed, let $X:=\bigsqcup_{n=1}^\infty X_n$ be the coarse space built from a sequence $(X_n)_n$ of expander graphs with associated Laplacian $\Delta_X\in \cB(\ell_2(X))$ as in the discussion on \cite[page 348]{HigsonLafforgueSkandalis2002GAFA}. Let $Y=X\times \N$ (equipped with the $\ell_1$-sum metric), and let $\Delta_Y$ be the operator that identifies with $\Delta_X$ on $\ell_2(X\times \{m\})$. Then $\Delta_Y$ is a bounded operator with propagation one, and so in $\cstu(Y)\subseteq \cstql(Y)$. Moreover, with respect to the decomposition $\ell_2(Y)=\bigoplus _{n,m\in \N}\ell_2(X_n\times \{m\})$, $\Delta_Y$ is a block diagonal operator acting on each $\ell_2(X_n\times \{m\})$ as the graph Laplacian of $X_n$. As $(X_n)_n$ is an expander,~$\Delta_Y$ has spectrum contained in $\{0\}\cup[\epsilon,\infty)$ for some $\epsilon>0$. Following the discussion on \cite[page 349]{HigsonLafforgueSkandalis2002GAFA}, the spectral projection $q$ associated to $\{0\}$ is the block operator that acts on $\ell_2(X_n\times\{m\})$ by the rank one projection with matrix
\begin{equation}\label{avg}
q_{n,m}:=\frac{1}{|X_n|}\begin{pmatrix}
1&\ldots & 1\\
\vdots& \ddots &\vdots\\
1 &\ldots & 1
\end{pmatrix}.
\end{equation}
Define $q_n:=\SOTh\sum_{m\in \N}q_{n,m}$ and set $S=\N$. The family $(q_s)_{s\in S}$ is then an asymptotic ghost, but $\SOTh\sum_{s\in T}q_s$ is not a ghost for \emph{any} nonempty subset $T$ of the index set $S$.
\item In the example from point \eqref{bad ag} above, it is also true that $\cstql(Y)$ itself contains non-trivial (i.e.\ infinite rank) ghost projections. We do not know of a u.l.f.\ space $X$ such that $\cstql(X)$ contains an asymptotic ghost, but no non-trivial ghost projections at all. Constructing such an example, or showing that none can exist, seems an interesting question.
\item On the other hand, if $(q_s)_{s\in S}$ is an asymptotic ghost such that $q_s\neq 0$ for all $s$ and $\sum_{s\in T}q_s$ is in $\cstql(X)$ for all $T\subseteq S$, then $X$ does not have property A. Thus the existence of non-trivial asymptotic ghosts in $\cstql(X)$ is an `exotic' phenomenon. We will not use this, so we just briefly sketch a proof. We can find a subset $T\subseteq S$ and a collection of disjoint finite subsets $(A_s)_{s\in T}$ of $X$ such that $a:=\SOTh\sum_{s\in T} \chi_{A_s}q_s\chi_{A_s}$ is a non-compact ghost. Corollary \ref{Corollary.Lemma.equi} implies that the family $(q_s)_{s\in T}$ is equi-quasi-local, and one can use this to show that $a$ is in $\cstql(X)$. Hence $X$ does not have property A by combining \cite[Theorem 1.3]{RoeWillett2014} and \cite[Theorem~3.3]{SpakulaZhang2020JFA}
\end{enumerate}
\end{remark}

Our next lemma 
%(\ref{LemmaCorollaryMeasureURA} below) 
is an analogue of \cite[Corollary 3.3]{BaudierBragaFarahKhukhroVignatiWillett2021uRaRig}, adapted to the asymptotic ghosts. 

\begin{lemma}\label{LemmaCorollaryMeasureURA}
Let $(X,d)$ be a u.l.f.\ metric space and let $(p_s)_{s\in S}$ be an orthogonal collection of projections in $\mathcal{B}(\ell_2(X))$. Consider the following three conditions on $(p_s)_{s\in S}$. 
\begin{enumerate}[label=(\roman*)]
	\item \label{Item1} The projection $\SOTh\sum_{s\in A}p_s$ is in $\cstql(X)$ for all $A\subseteq S$. 
	\item\label{Item2} The projection $\SOTh\sum_{s\in S}p_s$ is not a ghost.
	\item \label{Item3} The collection $(p_s)_{s\in S}$ is not asymptotically a ghost, 
\end{enumerate}
\begin{enumerate}
\item\label{1.LemmaCorollaryMeasureURA} Conditions \ref{Item1} and \ref{Item2} together imply that there are $\delta>0$, an infinite subset $X'\subseteq X$, and a function $f:X'\to S$ such that $\|p_{f(x)}\delta_x\|\geq \delta$ for all $x\in X'$.
\item \label{2.LemmaCorollaryMeasureURA} Conditions \ref{Item1} and \ref{Item3} together imply that there are $\delta>0$, an infinite subset $X'\subseteq X$, and a function $f:X'\to S$ such that $\|p_{f(x)}\delta_x\|\geq \delta$ for all $x\in X'$ and for every finite $T\subseteq S$ there exists finite $F\subseteq X$ such that $f(x)\in S\setminus T$ for all $x\in X'\setminus F$ (in other words, $f(x)$ tends to infinity in $S$ as $x$ tends to infinity in $X'$).
\end{enumerate}
\end{lemma}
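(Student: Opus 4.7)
The plan is to derive both parts from Lemma \ref{Claim4.6} by first translating the hypotheses \ref{Item2} and \ref{Item3} into the kind of \emph{pointwise mass} statements that feed into that lemma, then (for part \eqref{2.LemmaCorollaryMeasureURA}) running an inductive choice argument that simultaneously ejects previously used indices from $S$ and previously chosen points from $X$.

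For part \eqref{1.LemmaCorollaryMeasureURA}, let $p_S := \SOTh\sum_{s\in S} p_s$. Since $p_S$ is not a ghost, there exists $\gamma>0$ and an infinite set $X' \subseteq X$ with $\|p_S \delta_x\| > \gamma$ for all $x \in X'$. By hypothesis \ref{Item1}, the family $(p_s)_s$ satisfies the standing assumption of Lemma~\ref{Claim4.6}; applying it with $A=S$ yields a $\delta>0$ such that for every $x \in X' \subseteq X_S$ there is some $s \in S$ with $\|p_s \delta_x\| \geq \delta$. Choosing such an $s$ for each $x \in X'$ (e.g. via the axiom of choice) defines the desired map $f: X' \to S$.

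For part \eqref{2.LemmaCorollaryMeasureURA}, I would unpack the negation of ``asymptotically a ghost'': there exists $\gamma>0$ such that for every finite $T \subseteq S$ and finite $F \subseteq X$ there is some $x \in X \setminus F$ with $\|p_{S\setminus T} \delta_x\| > \gamma$, where $p_{S\setminus T} := \SOTh\sum_{s\in S\setminus T} p_s$. Let $\delta$ be the constant supplied by Lemma~\ref{Claim4.6} for this $\gamma$ (valid for the family $(p_s)_{s\in S\setminus T}$ for any $T$, since hypothesis \ref{Item1} passes to every subfamily). Now recursively build $x_n \in X$ and $s_n \in S$ as follows: set $T_0 = \emptyset$ and $F_0=\emptyset$; at stage $n$, pick $x_n \in X \setminus F_n$ with $\|p_{S\setminus T_n}\delta_{x_n}\| > \gamma$, apply Lemma~\ref{Claim4.6} to $A = S\setminus T_n$ to find $s_n \in S\setminus T_n$ with $\|p_{s_n}\delta_{x_n}\| \geq \delta$, and set $T_{n+1} = T_n \cup \{s_n\}$, $F_{n+1} = F_n \cup \{x_n\}$. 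Then $X' := \{x_n : n \in \N\}$ is infinite and $f(x_n) := s_n$ satisfies $\|p_{f(x_n)} \delta_{x_n}\| \geq \delta$. Moreover, $f$ is injective by construction, so for any finite $T \subseteq S$ only finitely many $n$ satisfy $f(x_n) \in T$, giving the required ``$f(x) \to \infty$'' condition with $F := \{x_n : f(x_n) \in T\}$.

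The key step is the verification that Lemma~\ref{Claim4.6} can be applied to the cofinite subfamilies $(p_s)_{s\in S\setminus T_n}$: this is immediate from hypothesis \ref{Item1}, since subfamilies $\{p_s\}_{s \in A}$ with $A \subseteq S\setminus T_n$ have $\SOTh\sum_{s\in A} p_s$ a subprojection of the original $\SOTh\sum_{s\in A \cup T_n} p_s \in \cstql(X)$ minus an element of $\cstql(X)$, and hence also lies in $\cstql(X)$. The only non-routine point in the argument is ensuring that the same $\delta$ works uniformly across the stages of the induction; this is automatic because Lemma~\ref{Claim4.6}'s constant depends on $\gamma$ and the \emph{whole} family $(p_s)_s$ but not on which subset $A$ one plugs in, so a single choice suffices throughout.
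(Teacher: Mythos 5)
Your proof is correct and follows essentially the same route as the paper: both parts reduce to Lemma \ref{Claim4.6}, with part (1) identical and part (2) differing only in bookkeeping (you grow the excluded finite sets $T_n$ adaptively so that $f$ is injective, whereas the paper fixes an exhaustion $S_1\subseteq S_2\subseteq\cdots$ of $S$ in advance and uses the resulting decreasing sets $X_n$). Your closing observation that a single $\delta$ from Lemma \ref{Claim4.6} works uniformly over every subset $A\subseteq S$ is exactly the point the paper relies on, and it is built into the statement of that lemma.
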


\begin{remark}\label{no strong} This is the moment when we can see that replacing the notion of a ghost projection with that of an asymptotic ghost has its merits. 
	Let $Y$ and $(q_{s})_{s\in S}$ be the asymptotic ghost constructed in Remark \ref{RemarkAsympGhosts}, part \eqref{bad ag}. Then $\sum_{s\in S} q_s$ is not a ghost. Moreover, any function $f:X'\to S$ with the properties as in Lemma \ref{LemmaCorollaryMeasureURA} \eqref{1.LemmaCorollaryMeasureURA} necessarily takes finite image, and it therefore cannot satisfy the requirements on $f$ stated in \eqref{2.LemmaCorollaryMeasureURA}. This shows that the assumption that $(q_s)_{s\in S}$ is not an asymptotic ghost is necessary to deduce the stronger conclusion of Lemma \ref{LemmaCorollaryMeasureURA}.
	\end{remark}
 
\begin{proof}[Proof of Lemma \ref{LemmaCorollaryMeasureURA}]
In order to simplify notation, for each $A\subseteq S$ we define 
\[
p_A \coloneqq \SOTh\sum_{s\in S}p_s.
\] 

\eqref{1.LemmaCorollaryMeasureURA}
Assume that $(p_s)_s$ satisfies \ref{Item1} and \ref{Item2}. Then $p_S$ is not a ghost and therefore there must exist $\gamma>0$ such that the set
\[
X_S:=\{x\in X\mid \|p_S\delta_x\|>\gamma\} 
\]
is infinite. Lemma~\ref{Claim4.6} implies there exists $\delta>0$ such that for every $x\in X_S$ some $f(x)\in S$ satisfies $\|p_{f(x)}\delta_x\|>\delta$, so we are done with this part if we define $X':=X_S$.

\eqref{2.LemmaCorollaryMeasureURA}
Now assume \ref{Item1} and \ref{Item3} from the statement of Lemma~\ref{LemmaCorollaryMeasureURA}. Fix a nested collection $S_1\subseteq S_2\subseteq \cdots$ of finite subsets of $S$ whose union is $S$, and define $q_n:=\sum_{s\in S\setminus S_n} p_s$. As $(p_s)_s$ is not asymptotically a ghost there is $\gamma>0$ such that for every $n\in \bbN$ the set 
\[
X_n:=\{x\in X\mid \|p_n\delta_x\|>\gamma\} 
\] 
is infinite. By the monotonicity property as in line \eqref{decr} above and the fact that the sequence $(q_n)_n$ is decreasing, we see that $X_1\supseteq X_2\supseteq \cdots$. As each $X_n$ is infinite, we may choose a sequence $(x_n)_n$ of distinct elements of $X$ such that $x_n\in X_n$ for all $X$. 
Lemma~\ref{Claim4.6} gives $\delta>0$ such that for every $n$ there exists $f(x_n)\in S\setminus S_n$ which satisfies $\|p_{f(x)}\delta_x\|<\delta$. 
 Setting $X':=\{x_n\mid n\in \N\}$, we are done.
\end{proof}

Theorem \ref{ThmBlockMatrixWeakStar} will be obtained as a corollary of the following more technical result. 

\begin{theorem}\label{ThmBlockMatrixWeakStar2}
Let $X$ be a u.l.f.\ metric space, let $(n_k)_k$ be a sequence of natural numbers that converges to infinity, let $\cM:=\prod_k \mathrm M_{n_k}(\bbC)$, and let $\Phi:\cM\to \cstql(X)$ be a normal $*$-homomorphic embedding. Let $S:=\{(i,k)\in \N\times \N\mid 1\leq i\leq n_k\}$. For each $s=(i,k)\in S$, let $e_{i,i}^k$ be the corresponding diagonal matrix unit in $\mathrm M_{n_k}(\C)$, and define $q_s:=\Phi(e_{i,i}^k)$. Then $(q_s)_{s\in S}$ is asymptotically a ghost.
\end{theorem}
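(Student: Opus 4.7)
The plan is to argue by contradiction, applying Lemma~\ref{LemmaCorollaryMeasureURA}\eqref{2.LemmaCorollaryMeasureURA} to $(q_s)_{s\in S}$ and then producing a unit element of $\cM$ whose image violates the equi-quasi-locality of the unit ball of $\Phi(\cM)$. Hypothesis \ref{Item1} of that lemma is met because normality of $\Phi$ gives $\SOTh\sum_{s\in A}q_s = \Phi\bigl(\SOTh\sum_{s\in A}e_{i,i}^k\bigr)\in\cstql(X)$ for every $A\subseteq S$. Assuming for contradiction that $(q_s)_{s\in S}$ is not asymptotically a ghost, the lemma yields $\delta>0$, an infinite $X'\subseteq X$, and a map $f\colon X'\to S$ with $\|q_{f(x)}\delta_x\|\geq\delta$ and $f(x)\to\infty$ in $S$. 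Writing $f(x_n)=(i_n,k_n)$ and using that for each $K\in\N$ the set $\{(i,k)\in S\mid k\le K\}$ is finite, I can extract a subsequence $(x_n)$ along which the $k_n$ are distinct and tend to infinity; since $n_k\to\infty$, also $n_{k_n}\to\infty$.

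Since $\Phi$ is normal, $\Phi(\cM)$ is a WOT-closed product of matrix algebras, so by Proposition~\ref{Corollary.Lemma.equi} its unit ball is equi-quasi-local. Fix $\epsilon\in(0,\delta)$ and let $r>0$ be such that every contraction in $\Phi(\cM)$ is $\epsilon$-$r$-quasi-local; set $m:=\sup_{x\in X}|B_r(x)|$, which is finite by u.l.f. After dropping finitely many terms, assume $n_{k_n}>m$ for all $n$. For each such $n$ and each $j\in\{1,\ldots,n_{k_n}\}$, consider the vector
\[
\psi_{n,j}:=\Phi(e_{j,i_n}^{k_n})\delta_{x_n}=\Phi(e_{j,i_n}^{k_n})q_{s_n}\delta_{x_n}.
\]
Since $(e_{j,i_n}^{k_n})^*e_{j,i_n}^{k_n}=e_{i_n,i_n}^{k_n}$, the operator $\Phi(e_{j,i_n}^{k_n})$ is isometric on the range of $q_{s_n}$, so $\|\psi_{n,j}\|=\|q_{s_n}\delta_{x_n}\|\geq\delta$. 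Moreover, the $\psi_{n,j}$'s lie in the pairwise orthogonal ranges of the projections $q_{(j,k_n)}$, so they are mutually orthogonal.

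The contradiction now comes from a dimension count. Because $\dim\ell_2(B_r(x_n))\leq m<n_{k_n}$, the linear map $\mathbb{C}^{n_{k_n}}\to\ell_2(B_r(x_n))$, $\alpha\mapsto\chi_{B_r(x_n)}\sum_j\alpha_j\psi_{n,j}$, has a unit vector $\alpha=(\alpha_j)$ in its kernel. Setting $w_n:=\sum_j\alpha_j\, e_{j,i_n}^{k_n}\in\cM$, a direct computation (collapsing $e_{i_n,j}^{k_n}e_{j',i_n}^{k_n}=\delta_{j,j'}e_{i_n,i_n}^{k_n}$) yields $w_n^*w_n=\|\alpha\|_2^2\, e_{i_n,i_n}^{k_n}=e_{i_n,i_n}^{k_n}$, so $\|w_n\|=1$. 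Orthogonality of the $\psi_{n,j}$'s gives
\[
\|\Phi(w_n)\delta_{x_n}\|^2 = \Big\|\sum_j\alpha_j\psi_{n,j}\Big\|^2 = \sum_j|\alpha_j|^2\|\psi_{n,j}\|^2 \geq \delta^2,
\]
while by choice of $\alpha$ the vector $\Phi(w_n)\delta_{x_n}$ is supported in $X\setminus B_r(x_n)$; hence $\|\chi_{X\setminus B_r(x_n)}\Phi(w_n)\delta_{x_n}\|\geq\delta$, contradicting the $\epsilon$-$r$-quasi-locality of the contraction $\Phi(w_n)$. The conceptual heart of the argument is the final pigeonhole: it is precisely here that the hypothesis $n_k\to\infty$ is genuinely used, converting the unbounded multiplicity of matrix factors (through the partial isometries $e_{j,i_n}^{k_n}$, which rearrange $\delta_{x_n}$ into $n_{k_n}$ orthogonal vectors) into a quasi-locality violation that the bounded-geometry dimension $m$ of $\ell_2(B_r(x_n))$ cannot absorb.
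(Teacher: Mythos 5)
Your proof is correct, and while it follows the paper's skeleton up to a point --- argue by contradiction, apply Lemma~\ref{LemmaCorollaryMeasureURA}\eqref{2.LemmaCorollaryMeasureURA} (checking hypothesis \ref{Item1} via normality of $\Phi$) to get $\delta$, $X'$, $f$ with $k(x)\to\infty$, and then invoke the equi-quasi-locality of the unit ball of $\Phi(\cM)$ coming from Lemma~\ref{Lemma.equi.0} --- your endgame is genuinely different from the paper's. The paper (Claims~\ref{cl1} and~\ref{cl2}) shows that each column vector $v^{k(x)}_{j,i(x)}\delta_x$ has a coordinate of size $\geq\gamma=\delta/(2m)$ at some point $z(x,j)\in B_r(x)$, then uses the Pythagorean theorem to bound by $\gamma^{-2}$ the number of indices $j$ that can share a given $z$, forcing $|B_r(x)|\geq n_{k(x)}\gamma^2\to\infty$ and contradicting uniform local finiteness directly. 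You instead observe that the $n_{k_n}$ mutually orthogonal vectors $\psi_{n,j}=\Phi(e^{k_n}_{j,i_n})\delta_{x_n}$, each of norm $\geq\delta$, cannot all be seen by the $m$-dimensional space $\ell_2(B_r(x_n))$ once $n_{k_n}>m$: a unit vector $\alpha$ in the kernel of $\alpha\mapsto\chi_{B_r(x_n)}\sum_j\alpha_j\psi_{n,j}$ yields a single partial isometry $w_n=\sum_j\alpha_j e^{k_n}_{j,i_n}$ whose image has norm $\geq\delta$ at $\delta_{x_n}$ but no mass on $B_r(x_n)$, violating $\epsilon$-$r$-quasi-locality for $\epsilon<\delta$. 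Both are pigeonhole arguments trading the unbounded matrix size against the bounded geometry of $B_r(x)$; yours is shorter and avoids the two-step multiplicity count, at the (nominal) cost of needing equi-quasi-locality for arbitrary contractions $\Phi(w_n)$ rather than just for the matrix units --- but since both are supplied by the same Lemma~\ref{Lemma.equi.0} (via Proposition~\ref{Corollary.Lemma.equi}), there is no real difference in what the argument depends on. Two small housekeeping points: only a single $n$ with $n_{k_n}>m$ is needed (the subsequence extraction is harmless but unnecessary), and the strict inequality $d(\{x_n\},X\setminus B_r(x_n))>r$ requires $B_r(x_n)$ to be the closed ball, which is the convention the paper uses implicitly in the proof of Lemma~\ref{Claim4.6}.
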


\begin{proof}
Assume for contradiction that $(q_s)_s$ is not asymptotically a ghost. Then, by the second part of Lemma \ref{LemmaCorollaryMeasureURA} there are $\delta>0$, an infinite subset $X'\subseteq X$, and a function $f:X'\to S$ such that 
\begin{equation}\label{del below}
\|q_{f(x)}\delta_x\|\geq \delta \quad \text{for all}\quad x\in X'.
\end{equation} 
Moreover, Lemma \ref{LemmaCorollaryMeasureURA} guarantees that $f$ can be taken so that  for every fixed value of $k$ there are only finitely many pairs $(i,k)$ in $S$. Therefore,    if we write $f(x)=(i(x),k(x))$, it follows that  
\begin{equation}\label{to infinity}
k(x)\to \infty \quad \text{as}\quad x\to\infty 
\end{equation}
(i.e. for any $K\in \N$ there exists a finite $F\subseteq X'$ such that if $x\in X'\setminus F$, then $k(x)\geq K$).
 For each $k\in \N$ and each pair $i,j\in \{1,...,n_k\}$, let $e_{i,j}^k$ denote the matrix in $\mathrm M_{n_k}(\C)$ with $1$ its $(i,j)$-entry and zero in all others, and set
 \[
 v_{i,j}^k:=\Phi(e_{i,j}^k).
 \]
% be the image of the corresponding matrix unit $e_{i,j}$ in the factor $\mathrm{M}_{n_k}(\C)$ of~$\cM$. 
 
\begin{claim}\label{cl1}
There are $\gamma,r>0$ such that for all $x\in X'$ and $j\in \{1,..., n_{k(x)}\}$ there is $z=z(x,j)\in B_r(x)$ such that $\|\chi_{\{z\}}v^{k(x)}_{j,i(x)}\delta_x\|\geq \gamma$.
\end{claim} 

\begin{proof}
By Lemma \ref{Lemma.equi.0}\footnote{This is the place where we use Lemma \ref{Lemma.equi.0}, and equi-approximability, or equi-quasi-locality results from earlier papers would not suffice.}, the family $\{v_{i,j}^k\mid k\in \N,i,j\in \{1,...,n_k\}\}$ is equi-quasi-local. So, there is $r>0$ such that each $v^k_{i,j}$ is $(\delta/2)$-$r$-quasi-local. Let $m=\sup_{x\in X} |B_r(x)|$ and set $\gamma=\delta/(2m)$; we claim this $\gamma$ has the desired property. 

Fix $x\in X'$ and $j\in \{1,...,n_{k(x)}\}$. Line \eqref{del below} gives $ \|q_{f(x)}\delta_x\|\geq \delta$; since $v^{k(x)}_{j,i(x)}$ is a partial isometry with source projection $q_{f(x)}$, we have that $ \|v^{k(x)}_{j,i(x)}\delta_x\|\geq \delta$. By our choice of $r$, we must have 
\[
\big\|\chi_{B_r(x)}v^{k(x)}_{j,i(x)}\delta_x\big\|\geq \delta/2.
\]
Therefore, by the choice of $\gamma$, there is $z\in B_r(x)$ such that 
\[
\big\|\chi_{\{z\}}v^{k(x)}_{j,i(x)}\delta_x\big\|\geq \gamma.\qedhere
\]
\end{proof}

\begin{claim}\label{cl2}
Given $\gamma,r>0$, let $\{z(x,j)\mid x\in X',j\in \{1,...,n_{k(x)}\}\}$ be as given by the previous claim. Then 
\[
\lim_{x\to \infty}|\{z(x,j)\mid j\in \{1,..., n_{k(x)}\}\}|=\infty.
\]
\end{claim}
 
\begin{proof}
 Let $N\in \N$ be arbitrary. Let $K\in \N$ be such that for all $k\geq K$, $n_k\geq N\gamma^{-2}$. Line \eqref{to infinity} gives a finite subset $F\subseteq X'$ such that $k(x)\geq K$ for all $x\in X'\setminus F$. We claim that $|\{z(x,j)\mid j\in \{1,..., n_{k(x)}\}\}|\geq N$ whenever $x\in X'\setminus F$, which will establish the claim.

Let $x\in X'\setminus F$ and $j\in \{1,...,n_{k(x)}\}$. As $\|\chi_{\{z(x,j)\}}v^{k(x)}_{j,i(x)}\delta_x\|\geq \gamma$ we have that with $s=(j,k(x))$ (using the fact that $0\leq p\leq q$ implies $\|aq\|\geq \|ap\|$ for all operators $a$)
\begin{multline}\label{qsz}
\big\|q_s\delta_{z(x,j)}\big\|
=\big\|\chi_{\{z(x,j)\}}q_s\big\|
\geq \|\chi_{\{z(x,j)\}}v_{j,i(x)}^{n(x)} (v_{j,i(x)}^{n(x)})^*\| 
\\
\geq \|\chi_{\{z(x,j)\}} v_{j,i(x)}^{n(x)} \|
\geq\big\|\chi_{\{z(x,j)\}}v^{n(x)}_{j,i(x)}\delta_x\big\|\geq \gamma.
\end{multline}
Fix $x\in X’$ and for $z\in X$ let 
\[
G=G(z):=\{i\leq n_{k(x)}\mid z(x,i)=z\}.
\]
 Using the Pythagorean theorem and line \eqref{qsz}, we have that 
\begin{align*}
	1\geq \Bigg\|\sum_{j\in G}q_{j,k(x)}\delta_z\Bigg\|^2= \sum_{i\in G}\|q_{j,k(x)}\delta_z\|^2\geq \gamma^2|G|.
\end{align*}
Hence, $|G(z)|\leq \gamma^{-2}$. 
Since $z\in X$ was arbitrary and $i\in \{1,\dots, n_{k(x)}\}$, this implies that $|\{z(x,j)\mid j\in \{1,..., n_{k(x)}\}\}|\geq n_{k(x)}\gamma^2$.
Since $n(k)\geq N \gamma^{-2}$, this set is larger than $N$ for $x\in X'\setminus F$, so we are done.
\end{proof} 

We are now ready to complete the proof of Theorem \ref{ThmBlockMatrixWeakStar2}. Indeed, Claims \ref{cl1} and \ref{cl2} combined imply that for any $N\in \N$ we can find $x\in X'$ such that $B_r(x)$ contains at least $N$ distinct points of the form $z(x,j)$. This contradicts that $X$ is u.l.f.. 
\end{proof}

Finally, we can complete this section with the proof of Theorem \ref{ThmBlockMatrixWeakStar}.

\begin{proof}[Proof of Theorem \ref{ThmBlockMatrixWeakStar}]
Suppose that $X$ is a u.l.f.\ metric space, $(n_k)_k$ is a sequence of natural numbers that tends to infinity, and that $\Phi\colon \cM=\prod_{k}\mathrm M_{n_k}(\C) \to \cstql(X)$ is a normal embedding sending $\bigoplus_{n}\mathrm M_{n}(\C)$ inside the ghost operators. We need to prove that $\Phi(1_{\cM})$ is a ghost.

With notation $q_s:=\Phi(e_{i,i}^k)$ as in the statement of Theorem \ref{ThmBlockMatrixWeakStar2} we have that the collection $(q_s)_{s\in S}$ is asymptotically a ghost. However, by assumption every $q_s$ is itself a ghost. Hence by Remark \ref{RemarkAsympGhosts}, part \eqref{gag}, $\Phi(1_{\cM})=\sum_{s\in S}q_s$ is also a ghost.
\end{proof}

\section{From embeddings to WOT-continuous embeddings}\label{SectionMakingWeakSCont}\label{S.WOT}

In \S\ref{SecLInfty} and \S\ref{SectionProdMatrix}, we proved versions of Theorems \ref{ThmLInftyQL} and \ref{ThmBlockMatrix} with the extra assumption that the embeddings being normal. In this section, we show that this extra assumption is not needed for the validity of those theorems. 

The main tool we use here are several automatic normality results for von Neumann algebras which might be of interest in their own right (at least some of them seem likely to be known to experts). For the sake of completeness, we include a characterization of when exactly a von Neumann algebra admits a non-normal representation on a separable Hilbert space, although we do not need this for our main results. 

\subsection{Embeddings of $\prod_k \mathrm{M}_{n_k}(\C)$}

It turns out that if $(n_k)_k$ tends to infinity, then any representation of $\prod_k \mathrm{M}_{n_k}(\C)$ on a separable Hilbert space is normal. This is due to Takemoto: see \cite[Theorem~1]{Takemoto}. Takemoto's result has apparently been overlooked: its proof closely resembles the proof of \cite[Theorem~V.5.1]{Tak:TheoryI}. The only difference is in the proof of Case I in \cite[Theorem~V.5.1]{Tak:TheoryI}: one has to argue that for every~$n$ the projections $p_{n,j,k}$ can be constructed for sufficiently large $j$ (in the type II case covered by \cite[Theorem~V.5.1]{Tak:TheoryI}, they exist for arbitrarily large $n$). 

As the precise statement we want is not explicit in Takemoto's paper \cite{Takemoto}, we show how to derive it.

\begin{theorem}[Takemoto]\label{takemoto the}
Let $(n_k)_k$ be a sequence of natural numbers that tends to infinity. Then every representation of $\cM:=\prod_{k\in \N} \mathrm{M}_{n_k}(\C)$ on a separable Hilbert space is normal.
\end{theorem}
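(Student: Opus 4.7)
The plan is to follow the template of Takesaki's proof of Theorem V.5.1 as indicated by the authors, with the specific modification they point out for the Type I finite case with unbounded matrix sizes. I will argue the contrapositive of the standard normal/singular decomposition: if $\pi\colon \cM \to \cB(H)$ is a $*$-homomorphism with $H$ separable, then $\pi$ is normal.

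First I would decompose $\pi = \pi_n \oplus \pi_s$ into its normal and singular parts (using the standard structure, e.g., \cite[III.2.1.4]{Black:Operator}). Here $\pi_s$ vanishes on the minimal $\sigma$-weakly dense two-sided ideal $J := \bigoplus_k \mathrm M_{n_k}(\C)$ of $\cM$ (since $\pi_s$ is singular and $J$ is the ideal of $\sigma$-weak limits of increasing nets of finite projections in $\cM$). Hence it suffices to show $\pi_s = 0$. Suppose for contradiction $\pi_s \neq 0$, and fix a unit vector $\xi \in H$ with $\pi_s(1_\cM)\xi = \xi$; set $\phi(a) := \langle \pi_s(a)\xi,\xi\rangle$, a state on $\cM$ with $\phi(1_\cM) = 1$ and $\phi|_J = 0$, whose GNS representation $(\pi_\phi, H_\phi, \xi_\phi)$ acts on a separable Hilbert space.

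The next, and technically central, step is to use $n_k \to \infty$ to construct, for each $n$, a Takesaki-style system of projections $p_{n,j,k} \in \cM$. For each $n$, choose $k_0 = k_0(n)$ with $n_k \geq n$ for $k \geq k_0$; within each $\mathrm M_{n_k}(\C)$ (for $k \geq k_0$) partition $\{1,\dots,n_k\}$ into $n$ blocks of sizes $\lfloor n_k/n\rfloor$ (plus a remainder of rank $<n$), and define the projections $p_{n,j,k}$ for $j = 1,\dots,n$ by the characteristic function of the $j$-th block; bundle them across $k \geq k_0$ into projections $P_j^{(n)} \in \cM$. These are $n$ mutually orthogonal projections which are pairwise equivalent in $\cM$ modulo the "remainder" $R^{(n)} := 1_\cM - \sum_j P_j^{(n)}$ (which has per-coordinate rank $< n$ and small finite-support piece). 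Crucially, for $k$ sufficiently large ($n_k \geq n^2$ say), the remainder $R^{(n)}\chi_{\{k\geq k_1\}}$ is subequivalent to $P_1^{(n)}\chi_{\{k\geq k_1\}}$, and the piece $R^{(n)}\chi_{\{k<k_1\}}$ lies in $J$; thus $R^{(n)} \precsim P_1^{(n)}$ modulo $J$. This is exactly where the modification over the Type II case is required: whereas in Takesaki's Type II$_1$ setting the projections $p_{n,j,k}$ exist for all $n$ by diffuseness of the trace, here their existence is only guaranteed for $k$ (and thus $j$) sufficiently large compared to $n$.

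Passing to the GNS picture, the system $\{P_j^{(n)}\}$ together with the associated partial isometries $V_{ij}^{(n)} \in \cM$ embeds $\mathrm M_n(\C)$ unitally into the corner $\pi_\phi(Q^{(n)})\pi_\phi(\cM)''\pi_\phi(Q^{(n)})$ (where $Q^{(n)} := \sum_j P_j^{(n)}$), with $\pi_\phi(1_\cM - Q^{(n)}) = \pi_\phi(R^{(n)})$ dominated (as a sub-projection) by $\pi_\phi(P_1^{(n)})$. From this, varying $n$ and using Takesaki's argument (dimension counting in the separable Hilbert space together with the orthogonality/equivalence relations among $\{\pi_\phi(p_{n,j,k})\xi_\phi\}$ for $j,k$ varying), one produces an uncountable orthogonal family of non-zero vectors in $H_\phi$, contradicting separability.

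The main obstacle I expect is producing this uncountable orthogonal family cleanly, since singularity of $\phi$ only kills $J$ but leaves a large quotient $\cM/J$ to control. The key mechanism (and the one Takesaki uses in the Type II$_1$ case) is that the partial isometries $V_{ij}^{(n)}$ act on $H_\phi$ through $\pi_\phi$ and let one "permute" within the $n$-tuple, and the resulting families, combined with an almost disjoint family of index sets for the $k$-coordinate, generate the required uncountable orthogonal set; verifying that this transplants to our setting is precisely the content of saying the $p_{n,j,k}$ exist for sufficiently large $j$.
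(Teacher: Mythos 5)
Your overall architecture differs from the paper's in an instructive way. The paper does \emph{not} reprove Takemoto's theorem: it assumes $\pi$ is non-normal, explicitly constructs a non-zero projection $r\in\pi(\cM)'$ such that the compression $m\mapsto r\pi(m)r$ annihilates every homogeneous central summand $e_n\cM$, and then cites \cite[Theorem 1]{Takemoto} to conclude $rH$ is non-separable. Your reduction via the normal/singular decomposition $\pi=\pi_n\oplus\pi_s$ is a correct and arguably cleaner route to the same intermediate statement (a non-zero representation, equivalently a state $\phi$ with separable GNS space, annihilating $J=\bigoplus_k\mathrm M_{n_k}(\C)$), and your construction of the systems $P^{(n)}_j$ with the remainder controlled modulo $J$ correctly identifies the one modification of Takesaki's Case I that the paper's preamble describes. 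Up to that point the proposal is fine.

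The gap is in the final step, which is exactly the step the paper outsources to Takemoto: you assert that ``Takesaki's argument'' together with ``an almost disjoint family of index sets for the $k$-coordinate'' produces an uncountable orthogonal family in $H_\phi$, but the mechanisms you name do not suffice, and the real difficulty is not addressed. Two concrete obstructions: (i) for \emph{any} state $\phi$, an orthogonal (even orthogonal-mod-$J$) family of projections can have $\phi(q_\alpha)\geq\delta$ for at most $\lfloor 1/\delta\rfloor$ of its members, so families built from almost disjoint sets of $k$-indices --- or from eventually different diagonal selectors --- give vectors $\pi_\phi(q_\alpha)\xi_\phi$ that are pairwise orthogonal but of norm $0$ for all but countably many $\alpha$; (ii) since $\phi$ need not be tracial, the Murray--von Neumann equivalences $P^{(n)}_i\sim P^{(n)}_j$ give no control whatsoever on the values $\phi(P^{(n)}_j)$, so ``dimension counting together with the equivalence relations'' does not by itself yield uncountably many vectors of norm bounded below. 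What actually makes the Takesaki/Takemoto argument work is an inductive construction of matrix-unit systems \emph{adapted to $\phi$} (using singularity at each stage to force $\phi$ to vanish on the sources of the relevant partial isometries, so that $\phi(w_\beta^*w_\alpha)=0$ by Cauchy--Schwarz while $\phi(w_\alpha^*w_\alpha)$ stays bounded below along an uncountable tree of branches); your sketch does not engage with this, and it is precisely the content of the theorem you are trying to reprove. Either carry out that adapted construction in detail, or do as the paper does and cite \cite[Theorem 1]{Takemoto} for this step.
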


\begin{proof}
Let $\pi:\cM\to \cB(H)$ be a non-normal representation on a Hilbert space $H$; we must show that $H$ is non-separable. According to Proposition \ref{n cons}, part \eqref{6.normal}, there is a collection of orthogonal projections $(p_i)_i$ in $\cM$ such that $\SOTh\sum\pi(p_i)<\pi(\SOTh\sum p_i)$. For each $k\in \N$, let $1_k\in \cM$ be the unit of the factor $\mathrm M_{n_k}(\C)$, and for each $n\in \N$, let 
\[
e_n:=\sum_{\{k\mid n_k=n\}} 1_{k},
\]
so $e_n\in \cM$ is a central projection in $\cM$ such that $e_n\cM$ is exactly the $n$-homogeneous part of $\cM$ (possibly zero). Let $p_{i,n}:=p_ie_n$, and let $q_n:=e_n-\sum_i p_{i,n}$ (only finitely many terms in the sum are non-zero, as $\{k\mid n_k=n\}$ is finite for every $n$). Then 
\begin{align}\label{first inq}
\SOTh\sum_n \pi(e_n) & =\SOTh\sum_n \Big(\pi(q_n)+\sum_i \pi(p_{i,n})\Big) \nonumber \\ & =\SOTh\sum_n \pi(q_n)+\SOTh\sum_{i,n}\pi(p_{i,n})
\end{align}
As for each $i$, $\SOTh\sum_n p_{i,n}=p_i$, we have that 
\begin{equation}\label{second inq}
\SOTh\sum_{i,n}\pi(p_{i,n})\leq \SOTh\sum_i \pi(p_i)<\pi(\SOTh\sum_i p_i),
\end{equation}
so combining lines \eqref{first inq} and \eqref{second inq} we get
\[
\SOTh\sum_n \pi(e_n)<\SOTh\sum_n \pi(q_n)+\pi\Big(\SOTh\sum_i p_i\Big).
\]
As $\SOTh\sum_i p_i=\SOTh\sum_{n,i} p_{i,n}$ and as $\SOTh\sum_n\pi(q_n)\leq \pi(\SOTh\sum_n q_n)$, this implies that 
\begin{align*}
\SOTh\sum_n \pi(e_n) & <\pi\Big(\SOTh\sum_nq_n\Big)+\pi\Big(\SOTh\sum_{n,i}p_{i,n}\Big) \\ & =\pi\Big(\SOTh\sum_{n}\Big(q_n+\sum_i p_{i,n}\Big)\Big)=\pi(1_{\cM}).
\end{align*}
Define $r:=\pi(1_{\cM})-\SOTh\sum_n\pi(e_n)$, which is a non-zero projection in $\pi(\cM)'$. Then $m\mapsto r\pi(m) r$ is a non-zero representation of $\cM$ on $rH$ that contains all the $e_n$ in its kernel. Hence \cite[Theorem 1]{Takemoto}\footnote{More precisely: Takemoto requires $e_n\neq 0$ for all $n$, and our assumption that $n_k\to\infty$ implies only that $e_n\neq 0$ for infinitely many $n$; nonetheless, the same proof as of \cite[Theorem 1]{Takemoto} gives the result (compare also \cite[Remark on page 575]{Takemoto}, which makes a related point).} implies that $rH$ is non-separable.
\end{proof}

\begin{proof}[Proof of Theorem \ref{ThmBlockMatrix}]
	Suppose $X$ is a u.l.f.\ metric space, $(n_k)_k$ is a sequence of natural numbers that tends to infinity, and $\Phi\colon\prod_k \mathrm M_{n_k}(\bbC) \to \cstql(X)$ is a $*$-homomorphic embedding which sends $\bigoplus_k \mathrm M_{n_k}(\bbC)$ to the ideal of ghost operators. By Theorem~\ref{T.Normal}, $\Phi$ is normal, and Theorem~\ref{ThmBlockMatrixWeakStar2} implies that $\Phi$ sends all of $\cM$ to the ideal of ghost operators. 
\end{proof}

\begin{remark}
It is also possible to adapt the proof of {\cite[Theorem 4.3]{BragaFarahVignati2019Comm}} to show that if there is a norm-continuous $*$-homomorphic embedding of $\prod_k \mathrm M_{n_k}(\bbC)$ into $\cstql(X)$, then there is a similar embedding that is also normal; this would be good enough for our results. We chose here to go through Takemoto's theorem instead as it seemed more conceptual to rely on a very general von Neumann algebra result than on something that seems special to uniform Roe algebras.
\end{remark}

\subsection{Embeddings of $L_\infty(Z,\mu)$}\label{S.Linfty}

In this subsection we show that if there exists a (possibly non-normal) embedding of $L_\infty(Z,\mu)$ into $\cB(H)$ for some separable $H$, there is a non-trivial corner of $L_\infty(Z,\mu)$ on which the embedding is normal. 

We will actually prove this in much more generality. The following is the main result of this subsection.

\begin{proposition} \label{P.SOT} 
Let $\cM$ be a von Neumann algebra, and let $\pi:\cM\to \cB(H)$ be a (not necessarily normal, not necessarily faithful, and not necessarily unital) representation of $\cM$ on a separable Hilbert space. Then there exists a non-zero projection $r\in \cM$ such that the restriction of $\pi$ to the corner $r\cM r$ is normal.
\end{proposition}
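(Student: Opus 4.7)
The plan is to exploit the normal--singular decomposition of positive functionals on a W*-algebra, together with the separability of $H$, to reduce non-normality of $\pi$ to a single singular positive functional that must vanish on some nonzero projection in $\cM$. If $\pi$ is already normal, take $r = 1_{\cM}$. Otherwise, fix a countable norm-dense sequence $\{\xi_n\}_{n \in \NN} \subseteq H$ and consider the positive vector functionals $\omega_n(a) := \langle \pi(a)\xi_n, \xi_n\rangle$. Each decomposes uniquely (via \cite[Theorem~III.3.8]{Tak:TheoryI}) as $\omega_n = \omega_n^{\mathrm{nor}} + \omega_n^{\mathrm{sing}}$ into normal and singular positive parts. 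The estimate $\|\omega_\xi - \omega_{\xi'}\| \leq \|\xi - \xi'\|(\|\xi\| + \|\xi'\|)$ together with norm-closedness of the predual $\cM_*$ in $\cM^*$ forces at least one $\omega_n^{\mathrm{sing}}$ to be nonzero, otherwise $\pi$ would itself be normal. Form the norm-convergent positive combination
\[
\phi := \sum_{n} \frac{2^{-n}}{1 + \|\omega_n^{\mathrm{sing}}\|}\, \omega_n^{\mathrm{sing}} \in \cM^*,
\]
which is nonzero and singular since the singular functionals form a closed subspace of $\cM^*$.

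The heart of the argument is to produce a nonzero projection $r \in \cM$ with $\phi(r) = 0$. In the abelian case $\cM = L_\infty(X,\mu)$ this is essentially explicit: $\phi$ corresponds to a positive Radon measure on the hyperstonean spectrum $S$ of $\cM$ supported on a closed $\mu_S$-null set $F$, and any nonempty clopen $V \subseteq S \setminus F$ (which exists because $S$ is extremally disconnected and $S \setminus F$ is open of full $\mu_S$-measure) supplies $r := \chi_V$. For general $\cM$, the analogous conclusion follows from the canonical splitting $\cM^* = \cM_* \oplus \cM^*_{\mathrm{sing}}$ and the support theory of normal functionals on $\cM^{**}$: $\phi$ extends to a normal functional on $\cM^{**}$ whose support lies in the singular-bidual summand, which yields nonzero projections in $\cM$ annihilating $\phi$.

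Once $r$ is obtained, positivity of $\phi$ gives $\omega_n^{\mathrm{sing}}(r) = 0$ for every $n$, and Cauchy--Schwarz for the positive functional $\omega_n^{\mathrm{sing}}$ yields
\[
\bigl|\omega_n^{\mathrm{sing}}(r a r)\bigr|^2 \leq \omega_n^{\mathrm{sing}}(r)\, \omega_n^{\mathrm{sing}}(r a^* a r) = 0
\]
for every $a \in \cM$, so $\omega_n^{\mathrm{sing}}$ vanishes on the corner $r\cM r$. Hence $\omega_n|_{r\cM r} = \omega_n^{\mathrm{nor}}|_{r\cM r}$ is normal for each $n$, and norm-density of $\{\xi_n\}$ combined with closedness of $(r\cM r)_*$ inside $(r\cM r)^*$ extends this to $\omega_\xi|_{r\cM r}$ normal for every $\xi \in H$. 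Since every vector state of $\pi|_{r\cM r}$ is normal, any bounded increasing net $b_i \uparrow b$ in $r\cM r$ satisfies $\omega_\xi(b_i) \uparrow \omega_\xi(b)$ for each $\xi$, which is exactly $\pi(b_i) \uparrow \pi(b)$ in $\cB(H)$, giving normality of $\pi|_{r\cM r}$. The main obstacle is the structural step in the second paragraph: producing a nonzero annihilating projection for a singular positive functional on a general W*-algebra. In the abelian case this is elementary, and for the applications needed later in the paper one can also reduce to this case or to the setting of products of matrix algebras (handled independently by Takemoto's theorem); the fully general statement requires the careful bidual-support analysis sketched above.
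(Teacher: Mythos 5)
Your argument is correct, and it takes a genuinely different route from the paper's. The paper first passes to a countably decomposable summand and then splits $\cM$ by type: the type II, type III and properly infinite type I parts are disposed of by the automatic-normality theorem \cite[Theorem~V.5.1]{Tak:TheoryI}, and what remains is a corner $\mathrm{M}_n(\C)\overline{\otimes}\cN$ with $\cN$ abelian, where a normal tracial state $\tau$ is used to control a maximal (countable, by separability of $H$) family of ``defect'' projections $p_j=\pi(\SOTh\sum_n q_{n,j})-\SOTh\sum_n\pi(q_{n,j})$ and to cut away a projection $q$ with $\tau(q)\leq 1/2$, the complement of which is the desired $r$. You bypass the type decomposition entirely: separability of $H$ instead enters by letting you bundle the singular parts of countably many vector functionals into a single positive singular $\phi\in\cM^*$, after which everything reduces to finding a nonzero projection annihilated by $\phi$. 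One important remark: the step you flag as the ``main obstacle'' is not an obstacle at all, but a standard characterization of singularity --- a positive functional $\omega$ on a von Neumann algebra is singular if and only if every nonzero projection dominates a nonzero projection $f$ with $\omega(f)=0$ (this is part of the circle of results around \cite[Theorem~III.3.8]{Tak:TheoryI}, where the normal/singular decomposition you use is also proved) --- so no bidual support analysis is needed and your proof is complete as written once that reference is supplied; the remaining steps (Cauchy--Schwarz to kill $\omega_n^{\mathrm{sing}}$ on the corner, norm-closedness of the predual to pass from the dense set of vectors to all of $H$, and the vector-state computation of suprema) all check out. Your route is more uniform and more elementary, avoiding both the type decomposition and the nontrivial input of \cite[Theorem~V.5.1]{Tak:TheoryI}; what the paper's route buys in exchange is the sharper information that on the type II, type III and properly infinite type I summands the representation is normal on the entire summand rather than merely on some nonzero corner.
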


\begin{example}
Separability of $H$ is necessary for Proposition \ref{P.SOT} to hold. Indeed, let $\cM=L_\infty[0,1]$, and let $\pi:\cM\to \cB(H)$ be the direct sum of all one-dimensional representations. No one-dimensional representation of a diffuse abelian von Neumann algebra is normal (we will show this in the proof of Theorem \ref{T.Normal} below). As any corner of $\cM$ is a diffuse abelian von Neumann algebra, the conclusion of Proposition \ref{P.SOT} fails. Notice that, if one considers a single one-dimensional representation $\rho$ of $\mathcal M$, then the only possible $r$ making Proposition~\ref{P.SOT} true has to belong to the kernel of $\rho$.
\end{example}

For the proof of Proposition~\ref{P.SOT}, recall that a projection $p$ in a von Neumann algebra is called \emph{countably decomposable}, or \emph{$\sigma$-finite}, if any family of orthogonal subprojections of $p$ is countable.

\begin{proof}[Proof of Proposition~\ref{P.SOT}] 
Using \cite[III.1.2.6]{Black:Operator} there exists a family of $(c_i)_i$ of mutually orthogonal countably decomposable projections in $\cM$ such that $\sum_i c_i=1_{\cM}$ (we really only need that some $c_i$ is non-zero). Hence replacing $\cM$ with $c_i\cM c_i$ for any $i$ such that $c_i\neq 0$, we may assume that $\cM$ is countably decomposable.

As in the proof of Proposition \ref{PropvNaStruc} we may write $\cM$ as a direct sum 
\[
\cM=\cM_I\oplus \cM_{II}\oplus \cM_{III}
\]
where $\cM_{I}$ is the direct product of von Neumann algebras of the form $\cB(H_\aleph)\overline{\otimes} \cN_\aleph$ with $H_\aleph$ is a Hilbert space of dimension $\aleph$ for a cardinal $\aleph$ and $\cN_\aleph$ an abelian von Neumann algebra (possibly zero), and $\cM_{II}$ and $\cM_{III}$ are of types II and III respectively (possibly zero). If any of the summands $\cM_{III}$, $\cM_{II}$, or $\cB(H_\aleph)\overline{\otimes} \cN_\aleph$ with $\aleph$ infinite are non-zero, then \cite[Theorem~V.5.1]{Tak:TheoryI} implies that the restriction of $\pi$ to that summand is normal, and we are done.

Therefore $\cM$ has a corner which is $n$-homogeneous for some $n$, and by replacing $\cM$ with this corner we may assume that $\cM$ is of the form $\mathrm{M}_n(\C)\overline{\otimes}\cN$ for an abelian von Neumann algebra $\cN$. In particular, we may assume that~$\cM$ has a normal tracial state, say $\tau$.

Let $(p_j)_{j\in J}$ be a family of mutually orthogonal non-zero projections in $\cB(H)$ that is maximal with respect to the following condition:
\begin{enumerate}
\item[($*$)] \label{star}There exists a (countable, as $\cM$ is countably decomposable) family of mutually orthogonal non-zero projections $(q_{n,j})_{n\in \N}$ in $\cM$ such that $p_j=\pi(\SOTh\sum_{n} q_{n,j})-\SOTh\sum_n \pi(q_{n,j})$ (and this difference is non-zero).
\end{enumerate}
Of course, the family $(p_j)$ might be empty. As $H$ is separable, we may assume that $J$ is a subset of $\N$. Now, for each $j\in J$, choose $n(j)\in \N$ such that 
\[
\tau\Bigg(\SOTh\sum_{n\geq n(j)} q_{n,j}\Bigg)<2^{-j-2}
\]
($n(j)$ exists by normality of $\tau$). For each $j\in J$, define $q_j:=\SOTh\sum_{n\geq n(j)}q_{n,j}$, and define $q:=\bigvee_{j\in J} q_j$\footnote{The different $q_j$ need not be orthogonal, so this is an honest supremum, not a sum.}; we claim $r:=1_{\cM}-q$ has the desired property that the restriction of $\Phi$ to $r\cM r$ is normal. 

We first show that $r$ is non-zero. Note that for any finite $F\subseteq J$, $\tau(\bigvee_{j\in F}q_j)\leq \tau( \sum_{j\in F}q_j)$ by repeated applications of \cite[III.1.1.3]{Black:Operator}. Applying the definition of normality to the increasing net $(\bigvee_{j\in F} q_j)_{F\subseteq J\text{ finite}}$ gives 
\begin{align*}
\tau(q) & =\tau\Bigg(\lim_F \bigvee_{j\in F}q_j\Bigg)=\lim_F \tau\Big(\bigvee_{j\in F}q_j\Big)\leq \lim_F \sum_{j\in F}\tau(q_j)= \sum_{j\in J}\tau(q_j) \\ & \leq \sum_{j\in \N} 2^{-j-2} =1/2.
\end{align*}
Hence in particular, $\tau(r)\geq 1/2$, so $r$ is non-zero. 

We finally claim that $\pi$ restricted to $r\cM r$ is normal. The definition of $p_j$ (see ($*$) above) implies that $\pi(q_j)\geq p_j$. Hence for any $j$, $\pi(q)\geq \pi(q_j)\geq p_j$. Hence $1_H-\pi(q)$ is orthogonal to all the $p_j$. As $\pi(r)=\pi(1_{\cM}-q)\leq 1_H-\pi(q)$, this implies that $\pi(r)$ is orthogonal to all the $p_j$ too. Assume for contradiction that $\pi$ is not normal when restricted to $r \cM r$. Then by condition \eqref{6.normal} in Proposition \ref{n cons}, there exists a family of non-zero mutually orthogonal projections $(q_{p,n})_{n\in \N}$ in $r\cM r$ such that 
\[
p:=\pi\Bigg(\SOTh\sum_{n} q_{p,n}\Bigg)-\SOTh\sum_{n}\pi(q_{p,n})\neq 0.
\]
However, $p\leq \pi(r)$, so $p$ is orthogonal to all the $p_j$, contradicting maximality of the family $(p_j)$. 
\end{proof}

We are now ready to complete the remaining proofs of the theorems from the introduction.

\begin{proof}[Proof of Theorem \ref{ThmLInftyQL}] 
Suppose that $X$ is a u.l.f.\ metric space. Let $\cN$ be a diffuse abelian von Neumann algebra that embeds into $\cstql(X)$. Proposition~\ref{P.SOT} then implies that there is a normal embedding of $pL_\infty(Z,\mu)p$ into $\cstql(X)$, where $p$ is a non-zero projection in $\cN$. Since each non-zero corner of $\cN$ is itself a diffuse abelian von Neumann algebra, we have a normal embedding of a diffuse abelian von Neumann algebra into $\cstql(X)$. This contradicts Theorem~\ref{ThmLInftyWeakStar}, which asserts that there are no such normal embeddings. 
\end{proof} 

\begin{proof}[Proof of Corollary \ref{Cor.Main}] 
Suppose $X$ is a u.l.f.\ metric space and $\cM$ is a von Neumann algebra that embeds into $\cstql(X)$. By Theorem \ref{ThmLInftyQL} $\cM$ has no embedded diffuse abelian $\WOTh$closed subalgebras, hence Proposition \ref{PropvNaStruc} and separability of $\ell_2(X)$ imply that $\cM$ is isomorphic to $\prod_k\mathrm M_{n_k}(\C)$ for some sequence $(n_k)_{k\in\N}\subseteq \N$. 
\end{proof}

\begin{proof}[Proof of Corollary \ref{Cor.Main.2}] 
	Assume that $X$ is a u.l.f.\ metric space such that $\cstql (X)$ contains no noncompact ghost projections and a von Neumann algebra $\cM$ $*$-homomorphically embeds into $\cstql(X)$ by a map sending minimal projections to compact operators. Corollary~\ref{Cor.Main} implies that $\cM$ is isomorphic to $\prod_k\mathrm M_{n_k}(\C)$ for some sequence $(n_k)_{k\in\N}\subseteq \N$. It follows that the embeddings sends $\bigoplus_k\mathrm M_{n_k}(\C)$ to compact operators. 
	
	Assume towards a contradiction that the sequence $(n_k)_k$ is unbounded. Applying Theorem \ref{ThmBlockMatrix} to the composition of the embeddings, we conclude that all of $\prod_k \mathrm M_{n_k}(\C)$ is sent to ghost operators. This contradicts the assumption that all ghosts in $\cstql(X)$ are compact. 
\end{proof}

\subsection{A general characterization}

Finally in this section, we include a characterization of those von Neumann algebras that admit a non-normal representation on a separable Hilbert space. We also show that any von Neumann algebra that can be represented on a separable Hilbert space has separable predual. These results are included for the sake of completeness: they are not used for any of our results on embeddings into quasi-local algebras.

\begin{theorem} \label{T.Normal} 
The following are equivalent for every von Neumann algebra~$\cM$. 
\begin{enumerate}
\item \label{tn1} $\cM$ has no direct summands of the form $\mathrm M_n(\cN)$ for $n\geq 1$ and an infinite-dimensional abelian von Neumann algebra $\cN$. 
\item \label{tn2} Every representation of $\cM$ on a separable Hilbert space is automatically normal. 
\item \label{tn3} Every representation of $\cM$ on a finite-dimensional Hilbert space is automatically normal.
\end{enumerate} 
\end{theorem}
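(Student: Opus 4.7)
The plan is to prove the cycle \eqref{tn2}~$\Rightarrow$~\eqref{tn3}~$\Rightarrow$~\eqref{tn1}~$\Rightarrow$~\eqref{tn2}. The first implication is immediate, since every finite-dimensional Hilbert space is separable.

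For \eqref{tn3}~$\Rightarrow$~\eqref{tn1} I would argue by contrapositive. Assume $\cM$ has a direct summand of the form $\mathrm{M}_n(\cN)$ with $\cN$ infinite-dimensional abelian; I will construct a non-normal representation of $\cM$ on $\C^n$. Any infinite-dimensional abelian von Neumann algebra admits a non-normal character $\chi: \cN \to \C$: the normal characters of $\cN$ correspond bijectively to the atoms of its Gelfand spectrum, and the spectrum has non-atom points whenever $\cN$ either has a diffuse part or is of the form $\ell_\infty(I)$ with $I$ infinite (in the latter case one uses free ultrafilters on $I$). Fixing such a $\chi$, the composition
\[
\cM \twoheadrightarrow \mathrm{M}_n(\cN) \xrightarrow{\chi \otimes \mathrm{id}} \mathrm{M}_n(\C)
\]
is a non-normal representation of $\cM$ on $\C^n$, finishing the contrapositive.

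For \eqref{tn1}~$\Rightarrow$~\eqref{tn2}, I would mimic the decomposition from the proof of Proposition~\ref{PropvNaStruc} and write
\[
\cM = \cM_{II} \oplus \cM_{III} \oplus \cM_I^{\infty} \oplus \cM_I^{\mathrm{fin}},
\]
where $\cM_I^{\infty} = \prod_{\aleph\ \text{infinite}} \cB(H_\aleph) \overline{\otimes} \cN_\aleph$ collects the properly infinite type I summands and $\cM_I^{\mathrm{fin}} = \prod_n \mathrm{M}_n(\cN_n)$ collects the finite type I summands. Hypothesis \eqref{tn1} forces each $\cN_n$ to be finite-dimensional, so $\cM_I^{\mathrm{fin}}$ is either finite-dimensional or isomorphic to a product $\prod_k \mathrm{M}_{m_k}(\C)$ with $m_k \to \infty$ (each value of $m$ can appear only finitely many times). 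For any representation $\pi: \cM \to \cB(H)$ with $H$ separable, the four central projections in $\cM$ decompose $\pi$ as an orthogonal direct sum of representations on separable Hilbert spaces; since normality is preserved by direct sums, it suffices to treat each piece separately. Takesaki's \cite[Theorem V.5.1]{Tak:TheoryI} handles $\cM_{II}$, $\cM_{III}$, and $\cM_I^{\infty}$; finite-dimensionality handles $\cM_I^{\mathrm{fin}}$ in the first case; and Theorem~\ref{takemoto the} handles $\cM_I^{\mathrm{fin}}$ in the remaining case.

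The step I expect to be the main obstacle is threading \eqref{tn1} through the type/homogeneity decomposition on the $\cM_I^{\mathrm{fin}}$ piece: hypothesis \eqref{tn1} is precisely tailored so that the $m_k \to \infty$ requirement of Theorem~\ref{takemoto the} is met on that summand. Everything else is either elementary (the direct-sum decomposition, the trivial case of a finite-dimensional algebra) or is black-boxed to \cite[Theorem V.5.1]{Tak:TheoryI}.
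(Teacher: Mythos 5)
Your proposal is correct and follows essentially the same route as the paper: the implication \eqref{tn1}~$\Rightarrow$~\eqref{tn2} via the type decomposition, \cite[Theorem V.5.1]{Tak:TheoryI} for the non-finite-type-I part, and Theorem~\ref{takemoto the} for the finite type I part; and \eqref{tn3}~$\Rightarrow$~\eqref{tn1} by amplifying a non-normal character of an infinite-dimensional abelian von Neumann algebra. The only cosmetic difference is that you cite the correspondence between normal characters and atoms of the Gelfand spectrum, where the paper proves the existence of a non-normal character by hand (free ultrafilters on the atomic part, and a maximal family of projections killed by a character on the diffuse part).
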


\begin{proof}
Assume \eqref{tn1}. As in the proof of Proposition \ref{PropvNaStruc} we may write $\cM$ as a direct sum 
\[
\cM=\cM_0\oplus \cM_1
\]
where $\cM_0$ is a direct product of von Neumann algebras of the form $\mathrm{M}_n(\cN_n)$ for some abelian von Neumann algebra $\cN_n$, and $\cM_1$ contains no summand of finite type I. Using \cite[Theorem~V.5.1]{Tak:TheoryI}, every representation of $\cM_1$ on a separable Hilbert space is automatically normal, so we may assume $\cM=\cM_0$. Moreover, assumption \eqref{tn1} tells us that each $\cN_n$ must be finite-dimensional (possibly zero). Hence either $\cM$ is finite-dimensional (in which case any representation at all is normal), or $\cM$ is infinite-dimensional and of the form $\prod_k \mathrm M_{n_k}(\bbC)$ for a sequence $(n_k)_k$ that converges to infinity. Theorem \ref{takemoto the} then gives us condition \eqref{tn2}.

The implication from condition \eqref{tn2} to \eqref{tn3} is trivial, so it remains to show that \eqref{tn3} implies \eqref{tn1}. Assume that \eqref{tn1} fails. We first consider the case when $\cM$ is abelian; we may assume moreover that $\cM$ is infinite-dimensional, otherwise \eqref{tn1} is trivially true. Let $(p_i)_{i\in I}$ be a maximal collection of mutually orthogonal minimal projections in $\cM$ (possibly empty). Then $\cM\cong \mathcal{D}\oplus \ell_\infty(I)$, where $\mathcal{D}$ is diffuse (possibly zero). If $I$ is infinite, then the $*$-homomorphism $\phi:\ell_\infty(I)\to \C$ defined by evaluation along any non-principal ultrafilter on~$I$ is a non-normal representation on a one-dimensional Hilbert space. If $I$ is finite, $\mathcal{D}$ must be non-zero by infinite-dimensionality of $\cM$. As $\mathcal{D}$ is a non-zero commutative \cstar-algebra, there is a (non-zero) multiplicative linear functional $\phi:\mathcal{D}\to \C$. As $\mathcal{D}$ is diffuse, for any non-zero projection $p\in \mathcal{D}$ there is a non-zero projection $p_0\leq p$ such that $\phi(p)=0$: this follows as we can write $p=q+r$ for two non-zero orthogonal projections $q$ and $r$; as $\phi$ can only take the values $0$ and $1$ on projections, it must send at least one of $q$ and $r$ to zero. Hence if $(p_i)_i$ is a maximal family of orthogonal projections in $\mathcal{D}$ such that $\phi(p_i)=0$, we have that $\sum_i p_1=1_{\mathcal{D}}$ so $\phi(\sum p_i)=1$ even though $\phi(p_i)=0$ for all $i$. Hence $\phi$ is not normal and \eqref{tn3} fails. 

For the general case, suppose that $\cM$ has a direct summand of the form~$\mathrm M_n(\cN)$, where $n\geq 1$ and $\cN$ is an infinite-dimensional abelian von Neumann algebra. The above argument gives a non-normal representation $\phi:\cN\to \C$, and the amplification $\phi\otimes 1_n:\cN\otimes \mathrm{M}_n(\C)\to \mathrm{M}_n(\C)$ is then also not normal.
\end{proof} 

\begin{remark}
We ought to comment on glaring difference between Proposition~\ref{P.SOT} and the stronger statement for $\prod_n \mathrm M_n(\bbC)$ in Theorem \ref{takemoto the} (and other von Neumann algebras that do not have an infinite-dimensional summand of type I) given in the implication from \eqref{tn1} to \eqref{tn2} in Theorem~\ref{T.Normal}. 
In the latter case, every $*$-homomorphism is automatically normal. In the case of $L_\infty(Z,\mu)$, we only claim that if there is an injective $*$-homomorphism, then its restriction to the corner defined by a non-zero projection in the algebra is a nontrivial normal $*$-homomorphism. The implication from \eqref{tn2} to~\eqref{tn1} in Theorem~\ref{T.Normal} shows that this conclusion cannot be improved. 

This is analogous to the situation with embeddings of corona \cstar-algebras. In the abelian case, an abundant supply of nontrivial embeddings (also constructed using ultrafilters, in a manner similar to the proof that \eqref{tn3} implies~\eqref{tn1} in Theorem~\ref{T.Normal}) abound in ZFC while in the case of e.g., the Calkin algebra forcing axioms imply that all endomorphisms are trivial (\cite{Vac:Trivial}). An analogous result conjecturally holds for the coronas of separable \cstar-algebras that are simple \cstar-algebras. 
\end{remark}

\begin{proposition}\label{sep predual}
Let $\cM$ be a von Neumann algebra, and assume there exists a (not necessarily normal and not necessarily unital) faithful representation $\pi:\cM\to \cB(H)$ of $\cM$ on a separable Hilbert space. Then $\cM$ has separable predual.
\end{proposition}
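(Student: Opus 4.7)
The plan is to reduce to the abelian case using Theorem~\ref{T.Normal} and the type decomposition, and then apply Proposition~\ref{P.SOT} iteratively in the abelian case. First, write $\cM = \cM_a \oplus \cM_b$ where $\cM_a$ is the direct sum of all direct summands of $\cM$ of the form $\mathrm{M}_n(\cN)$ with $n\geq 1$ and $\cN$ an infinite-dimensional abelian von Neumann algebra, and $\cM_b$ contains no such summand. Theorem~\ref{T.Normal} implies that the restriction $\pi|_{\cM_b}$ is automatically normal; combined with faithfulness this exhibits $\cM_b$ as $*$-isomorphic to $\pi(\cM_b)\subseteq\cB(H)$. Since $H$ is separable, so is the predual of the generated von Neumann algebra, and hence $\cM_b$ has separable predual.

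For $\cM_a = \prod_i \mathrm{M}_{n_i}(\cN_i)$, the central supports $(z_i)_i$ of the summands are pairwise orthogonal central projections in $\cM$. Faithfulness of $\pi$ gives $\pi(z_i)\neq 0$, and separability of $H$ forces the orthogonal family $(\pi(z_i))_i$ in $\cB(H)$ to be at most countable; hence so is the index set. Since $\mathrm{M}_{n_i}(\cN_i)$ has separable predual if and only if $\cN_i$ does (its predual being a finite direct sum of copies of $\cN_{i*}$), and a countable $\ell^\infty$-product of von Neumann algebras with separable preduals has separable predual (its predual being the $\ell^1$-direct sum), the problem reduces to showing each $\cN_i$ has separable predual. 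Restricting $\pi$ to a corner $e_{11}^{(i)}\otimes \cN_i \cong \cN_i$ inside $\mathrm{M}_{n_i}(\cN_i)$ gives a faithful $*$-representation of $\cN_i$ on the separable Hilbert subspace $\pi(e_{11}^{(i)}\otimes 1_{\cN_i})H$, reducing the problem to its abelian case.

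For the abelian case, suppose $\cN$ is abelian and $\rho\colon\cN \to \cB(H')$ is a faithful $*$-representation with $H'$ separable. Using Proposition~\ref{P.SOT} and Zorn's lemma, construct a maximal family $(r_i)_{i\in I}$ of pairwise orthogonal non-zero projections in $\cN$ such that $\rho|_{r_i\cN r_i}$ is normal for each $i$. If $r:=\sum_i r_i$ were strictly smaller than $1_\cN$, Proposition~\ref{P.SOT} applied to the corner $(1_\cN-r)\cN(1_\cN-r)$ (with the restricted representation on the separable Hilbert subspace $\rho(1_\cN-r)H'$) would produce a non-zero projection orthogonal to all $r_i$ on which $\rho$ is normal, contradicting maximality; hence $\sum_i r_i = 1_\cN$. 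Faithfulness of $\rho$ and separability of $H'$ force $I$ to be countable. Since $\cN$ is abelian, each $r_i$ is central, and therefore $\cN \cong \prod_{i\in I} r_i\cN$ as a von Neumann algebra. Each factor $r_i\cN$ admits a faithful normal $*$-representation on the separable Hilbert space $\rho(r_i)H'$ (so has separable predual), and therefore so does the countable $\ell^\infty$-product $\cN$.

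The main obstacle is the summand $\cM_a$, because $\pi$ need not be normal on the whole algebra, only on well-chosen corners guaranteed by Proposition~\ref{P.SOT}. The key simplification exploited in the argument is that in the abelian case the projections produced by Proposition~\ref{P.SOT} are automatically central, so $\cN$ genuinely decomposes as a \emph{direct product} of algebras with separable predual---no ``off-diagonal'' bookkeeping (i.e.\ understanding $r_m\cN r_n$ for $m\neq n$) is required. In a non-abelian ambient algebra such bookkeeping would be unavoidable; the point of routing the argument through Theorem~\ref{T.Normal} is precisely to side-step this difficulty for the non-abelian summand $\cM_b$.
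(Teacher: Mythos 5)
Your proof is correct, and it shares the paper's overall skeleton (reduce to abelian corners via automatic normality, then combine Proposition~\ref{P.SOT} with a maximal orthogonal family and countability from faithfulness), but the abelian case is executed by a genuinely different and arguably cleaner argument. The paper first homogenizes by density character: it takes a maximal orthogonal family $F$ of projections $p$ with $\kappa(q)=\kappa(p)$ for all non-zero $q\leq p$ (where $\kappa$ is the density character of the predual of the corner), shows $\sum_{p\in F}p=1$ by ruling out an infinite strictly decreasing sequence of cardinals, and only then invokes Proposition~\ref{P.SOT} once per piece to get a contradiction with surjectivity of the predual map $\cB(H)_*\to (r\cM r)_*$. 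You instead exhaust directly: a maximal orthogonal family of projections on whose corners $\pi$ is normal must sum to $1$, since otherwise Proposition~\ref{P.SOT} applied to the complementary corner contradicts maximality; each piece then has separable predual because it is normally and faithfully represented on a separable space. This sidesteps the density-character bookkeeping entirely, at no cost; both routes equally need the observation (left implicit in the paper) that faithfulness plus separability of $H$ makes the maximal family countable, so that the predual of the $\ell_\infty$-product is a countable $\ell_1$-sum of separable spaces. Two cosmetic points: the splitting $\cM=\cM_a\oplus\cM_b$ is best phrased via the type decomposition (take the finite type I part $\prod_n \mathrm M_n(\cN_n)$ and separate the indices $n$ with $\cN_n$ infinite-dimensional), since ``the direct sum of all direct summands of a given form'' is not literally a well-defined object; and your appeal to Theorem~\ref{T.Normal} for $\cM_b$ is a mild repackaging of the paper's direct appeal to \cite[Theorem~V.5.1]{Tak:TheoryI}, which is harmless.
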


For the proof, recall that the \emph{density character} of a Banach space is the smallest possible cardinality of a dense subset.

\begin{proof}
As in the proof of Proposition \ref{P.SOT}, we may use \cite[Theorem~V.5.1]{Tak:TheoryI} and the structure theory of von Neumann algebras to reduce to the case that $\cM=\prod_{n\in \N}\mathrm{M}_n(\cN_n)$ where each $\cN_n$ is abelian. Hence it suffices to prove the result for all the von Neumann algebras $\cN_n$. Let us assume therefore that $\cM$ is abelian.

For each projection $p\in \cM$, let $\kappa(p)$ be the density character of the predual of $p\cM p$. Let $F$ be a maximal family of orthogonal projections in $\cM$ such that $\kappa(q)=\kappa(p)$ for all non-zero $q\leq p$. Define $p_0:=1_{\cM}-\sum_{p\in F} p$, which we claim is zero. Indeed, if not the definition of $F$ allows us to build a decreasing sequence $p_0\geq p_1\geq p_2\geq \cdots$ of projections in $\cM$ such that $\kappa(p_n)>\kappa(p_{n+1})$ for all $n$. This, however, gives a strictly decreasing infinite sequence of cardinals, which is impossible.

We claim next that for all $p\in F$, $p\cM p$ has separable predual. If this does not hold for some $p$, Proposition \ref{P.SOT} gives non-zero $r\leq p$ such that $\pi$ restricted to $r\cM r$ is normal. However, by definition of $F$, $r\cM r$ is non-separable, and the map $(\pi|_{r\cM r})_*:\cB(H)_*\to (r\cM r)_*$ is onto as $\pi$ is injective, so this is a contradiction. 

Finally, as $\cM$ is abelian, $\cM=\prod_{p\in F} p\cM p$, so we are done.
\end{proof}

\begin{acknowledgments}
This paper was written under the auspices of the American Institute of Mathematics (AIM) SQuaREs program as part of the `Expanders, ghosts, and Roe algebras' SQuaRE project. F.\ B. was partially supported by the US National Science Foundation under the grants DMS-1800322 and DMS-2055604, B.\ M.\ B. was partially supported by the US National Science Foundation under the grant DMS-2054860. I.\ F.\ is partially supported by NSERC. A.\ V.\ is supported by an `Emergence en Recherche' IdeX grant from Universit\'e Paris Cit\'e and an ANR grant (ANR-17-CE40-0026). R.\ W.\ is partially supported by the US National Science Foundation under the grant DMS-1901522.
\end{acknowledgments}

\newcommand{\etalchar}[1]{$^{#1}$}
\providecommand{\bysame}{\leavevmode\hbox to3em{\hrulefill}\thinspace}
\providecommand{\MR}{\relax\ifhmode\unskip\space\fi MR }
% \MRhref is called by the amsart/book/proc definition of \MR.
\providecommand{\MRhref}[2]{%
  \href{http://www.ams.org/mathscinet-getitem?mr=#1}{#2}
}
\providecommand{\href}[2]{#2}

\end{document}